\documentclass[a4paper,11pt]{amsart}
\usepackage[latin1]{inputenc}
\usepackage[english]{babel}
\usepackage{amsmath, amsthm, amssymb, amsopn, amsfonts, amstext, stmaryrd, enumerate, color, xcolor, mathtools, mathrsfs, graphicx, chngcntr}
\usepackage[final]{showkeys}
\usepackage{hyperref}
\numberwithin{equation}{section}

\definecolor{RED}{HTML}{F0051C}
\definecolor{BLUE}{HTML}{0629B3}

\hypersetup{
	colorlinks=true,
	linkcolor=BLUE,
	citecolor=RED,
	urlcolor=BLUE,
	linktoc=all}


\textwidth=6in \textheight=9.5in \topmargin=-0.5cm
\oddsidemargin=0.5cm \evensidemargin=0.5cm
\baselineskip=16pt plus 1pt minus 1pt

\setlength{\fboxsep}{0pt}
\setlength{\fboxrule}{1pt}

\DeclareGraphicsExtensions{.png}
\DeclareGraphicsExtensions{.pdf}

\DeclareMathOperator{\supp}{supp}
\DeclareMathOperator*{\osc}{osc}

\newcommand{\C}{\mathscr{C}}
\newcommand{\D}{\mathcal{D}}
\newcommand{\E}{\mathscr{E}}

\newcommand{\K}{\mathscr{K}}

\newcommand{\N}{\mathbb{N}}
\renewcommand{\P}{\mathscr{P}}
\newcommand{\R}{\mathbb{R}}

\newcommand{\Q}{\mathbb{Q}}
\newcommand{\Z}{\mathbb{Z}}

\newcommand{\loc}{{\rm loc}}

\newcommand{\dist}{{\mbox{\normalfont dist}}}
\newcommand{\tRn}{{\widetilde{\R}^n}}
\renewcommand{\S}{{\mathcal{S}_\omega^M}}

\renewcommand{\u}{{u_\omega^M}}
\newcommand{\F}{{\mathscr{F}_\omega}}
\newcommand{\A}{{\mathcal{A}_\omega^M}}
\newcommand{\M}{{\mathcal{M}_\omega^M}}
\newcommand{\tRnm}{{\widetilde{\R}_m^n}}
\newcommand{\um}{{u_{\omega, m}^{A, B}}}
\newcommand{\Fm}{{\mathscr{F}_{\omega, m}}}
\newcommand{\Am}{{\mathcal{A}_{\omega, m}^{A, B}}}
\newcommand{\Mm}{{\mathcal{M}_{\omega, m}^{A, B}}}
\newcommand{\hatum}{{\hat{u}_{\omega, m}^{A, B}}}
\newcommand{\tSm}{{\widetilde{\mathcal{S}}_{\omega, m}^{A, B}}}
\newcommand{\SAB}{{\mathcal{S}_\omega^{A, B}}}

\newcommand{\uAB}{{u_\omega^{A, B}}}
\newcommand{\AAB}{{\mathcal{A}_\omega^{A, B}}}
\newcommand{\MAB}{{\mathcal{M}_\omega^{A, B}}}

\newcommand{\Haus}{\mathcal{H}}

\newcommand{\RR}{\mathscr{R}}

\def\Xint#1{\mathchoice
{\XXint\displaystyle\textstyle{#1}}%
{\XXint\textstyle\scriptstyle{#1}}%
{\XXint\scriptstyle\scriptscriptstyle{#1}}%
{\XXint\scriptscriptstyle\scriptscriptstyle{#1}}%
\!\int}
\def\XXint#1#2#3{{\setbox0=\hbox{$#1{#2#3}{\int}$ }
\vcenter{\hbox{$#2#3$ }}\kern-.6\wd0}}

\def\dashint{\Xint-}

\theoremstyle{plain}
\newtheorem{definition}{Definition}[section]
\newtheorem{theorem}[definition]{Theorem}
\newtheorem{proposition}[definition]{Proposition}
\newtheorem{lemma}[definition]{Lemma}
\newtheorem{corollary}[definition]{Corollary}

\theoremstyle{definition}
\newtheorem{remark}[definition]{Remark}

\renewcommand{\le}{\leqslant}

\renewcommand{\ge}{\geqslant}

\begin{document}

\title[Non-local plane-like minimizers in a periodic medium]{Plane-like minimizers for a non-local Ginzburg-Landau-type energy in a periodic medium}

\author[Matteo Cozzi, Enrico Valdinoci]{
Matteo Cozzi${}^{(1,2)}$
\and
Enrico Valdinoci${}^{(3,4,5)}$
}

\subjclass[2010]{Primary: 35R11, 35A15, 35B08. Secondary: 82B26, 35B65}

\keywords{Non-local energies, phase transitions, plane-like minimizers, fractional Laplacian}

\thanks{The work described in this paper has been supported by the ERC grant 277749 ({\it $\varepsilon$~Elliptic Pde's and
Symmetry of Interfaces and Layers for Odd Nonlinearities}) and the PRIN grant
201274FYK7 ({\it Critical Point Theory
and Perturbative Methods for Nonlinear Differential Equations}). The first author is also supported by the~\emph{Mar\'ia de Maeztu} MINECO grant~MDM-2014-0445 and the MINECO grant~MTM2014-52402-C3-1-P}

\maketitle

\date{}

{
\scriptsize \begin{center}
(1) -- BGSMath Barcelona Graduate School of Mathematics.
\end{center}
\scriptsize \begin{center}
(2) -- Departament de Matem\`atiques\\
Universitat Polit\`ecnica de Catalunya\\
Diagonal 647, E-08028 Barcelona (Spain).
\end{center}
\scriptsize \begin{center}
(3) -- Weierstra{\ss} Institut f\"ur Angewandte Analysis und Stochastik\\
Mohrenstra{\ss}e 39, D-10117 Berlin (Germany).
\end{center}
\scriptsize \begin{center}
(4) -- Dipartimento di Matematica ``Federigo Enriques''\\
Universit\`a degli Studi di Milano,\\
Via Saldini 50, I-20133 Milano (Italy).
\end{center}
\scriptsize \begin{center}
(5) -- School of Mathematics and Statistics\\
University of Melbourne\\
Grattan Street, Parkville, VIC-3010 Melbourne (Australia).
\end{center}
\medskip
\begin{center}
E-mail addresses: matteo.cozzi@upc.edu,
enrico.valdinoci@wias-berlin.de
\end{center}
}
\bigskip

\begin{abstract}
We consider a non-local phase transition equation set in a periodic medium and we construct solutions whose interface stays in a slab
of prescribed direction and universal width. The solutions constructed also enjoy a local minimality property with respect to a suitable non-local energy functional.
\end{abstract}

\bigskip

\tableofcontents

\section{Introduction}

The goal of this paper is to construct
solutions of a scalar, fractional Ginzburg-Landau (or Allen-Cahn)
equation in a periodic medium, whose interface
stays in a prescribed slab and whose energy is minimal
among compact perturbations.

The simplest case that we have in mind is the
non-local equation
\begin{equation}\label{EXAMPLE}
(-\Delta)^s u(x) = Q(x)\,\big(u(x)-u^3(x)\big),\end{equation}
in which~$s\in(0,1)$ is a fractional
parameter and~$Q$ is a smooth function, bounded and bounded away
from zero, and such that
\begin{equation}\label{PER:EXAMPLE}
Q(x+k)=Q(x) \;{\mbox{ for every }}\; k\in\Z^n.\end{equation}
The operator~$(-\Delta)^s$ in~\eqref{EXAMPLE}
is a fractional power of the Laplacian, see e.g.~\cite{S06, DPV12}
for an introduction to this topic.

In the framework of equation~\eqref{EXAMPLE}, the
solution~$u:\R^n\to [-1,1]$
represents a state parameter in a model of phase coexistence
(the two ``pure phases'' being represented by $-1$ and~$+1$).
The presence of a fractional exponent~$s\in(0,1)$ is motivated
by models which try to take into account long-range particle
interactions (as a matter of fact, these
models may produce either a local or non-local tension effect,
depending on the value of~$s$, see~\cite{SV12, SV14}; see
also~\cite{PSV13}
for the
variational analysis 
of the different scales of energy that are involved in the model).
\smallskip

We also recall that equations of this type naturally
occur in other areas
of applied mathematics, such as
the Peierls-Nabarro model for crystal dislocations
when~$s=1/2$, and for generalizations of this model when~$s\in(0,1)$
(see e.g.~\cite{N97, DPV15, DFV14}). Related problems also
arise in models for diffusion of biological species (see
e.g.~\cite{F12}).

\smallskip

The periodicity condition in~\eqref{PER:EXAMPLE}
takes into account a possible geometric (or crystalline) structure
of the medium in which the phase transition takes place.

\smallskip

The level sets of the solution~$u$
have particular physical importance, since they correspond,
at a large scale, to the interface between the two phases of system.
The question that we address in this paper is then
to find solutions of~\eqref{EXAMPLE} whose level sets
lie in any given strip of universal size.
The direction of this strip will be arbitrary and the size of
the strip is bounded independently on the direction.
\smallskip

In addition to this geometric constraint on the level sets
of the solution, we will
also prescribe an energy condition. Namely,
equation~\eqref{EXAMPLE}
is variational. Though the associated energy functional
diverges (i.e. nontrivial solutions have infinite
total energy in the whole of the space), it is possible
to ``localize'' the non-local energy density in any fixed domain of
interest and require that the solution has a minimal
property with respect to any perturbation supported in this domain.

\smallskip

The existence of minimal solutions of phase transition equations
whose level sets are confined in a strip goes back to~\cite{V04},
where equation~\eqref{EXAMPLE} was taken into account for~$s=1$
and it is strictly related to the construction, performed in~\cite{CdlL01},
of minimal surfaces which stay at a bounded distance from a plane
(see also \cite{H32, AB06}). Furthermore, these types of results
may be seen as the analogue in partial differential equations
(or pseudo-differential equations) of the classical Aubry-Mather theory
for dynamical systems, see~\cite{M90} (a more detailed discussion
about the existence literature will follow).

\smallskip

As a matter of fact, we will consider here a more
general equation than~\eqref{EXAMPLE}.
Indeed, we will deal with operators that are
more general than the fractional
Laplacian, which can be also spatially heterogeneous and periodic,
and also with more general forcing terms, which may possess
different growths from the pure phases other than the classical
quadratic growth.

\smallskip

The details of the mathematical framework in which we work are the following.
For~$n \ge 2$, we consider the formal energy functional
\begin{equation} \label{Edef}
\E(u) := \frac{1}{2} \int_{\R^n} \int_{\R^n} \left| u(x) - u(y) \right|^2 K(x, y) \, dx dy + \int_{\R^n} W(x, u(x)) \, dx.
\end{equation}

The term~$K: \R^n \times \R^n \to [0, +\infty]$ is supposed to be a measurable and symmetric function, comparable to the kernel of the fractional Laplacian. That is,
\begin{equation} \tag{K1} \label{Ksimm}
K(x, y) = K(y, x) \quad \mbox{for a.e. } x, y \in \R^n,
\end{equation}
and\footnote{Although slightly more general requirements could be imposed on the growth of~$K$ for large values of~$|x - y|$ - see e.g. hypothesis~(1.3) in~\cite{K09} or~(2.2b) in~\cite{C17a} - we prefer to adopt the more restrictive condition~\eqref{Kbounds} in order to simplify the exposition. Requirements~\eqref{Ksimm} and~\eqref{Kbounds} nonetheless allow for a great variety of space-dependent, possibly truncated kernels. In particular, we stress that no regularity is asked on~$K$.}
\begin{equation} \tag{K2} \label{Kbounds}
\frac{\lambda \, \chi_{[0, 1]}(|x - y|)}{|x - y|^{n + 2 s}} \le K(x, y) \le \frac{\Lambda}{|x - y|^{n + 2 s}} \quad \mbox{for a.e. } x, y \in \R^n,
\end{equation}
for some~$\Lambda \ge \lambda > 0$ and~$s \in (0, 1)$.

\smallskip

The mapping~$W$ is a double-well potential, with zeros in~$-1$ and~$1$. More specifically, we assume~$W: \R^n \times \R \to [0, +\infty)$ to be a bounded measurable function for which
\begin{equation} \tag{W1} \label{Wzeros}
W(x, \pm 1) = 0 \quad \mbox{for a.e. } x \in \R^n,
\end{equation}
and, for any~$\theta \in [0, 1)$,
\begin{equation} \tag{W2} \label{Wgamma}
\inf_{\substack{x \in \R^n \\ |r| \le \theta}} W(x, r) \ge \gamma(\theta),
\end{equation}
where~$\gamma$ is a non-increasing positive function of the interval~$[0, 1)$. Moreover, we require~$W$ to be differentiable in the second component, with partial derivative locally bounded in~$r \in \R$, uniformly in~$x \in \R^n$. Accordingly, we let
\begin{equation} \tag{W3} \label{Wbound}
W(x, r), \, |W_r(x, r)| \le W^* \quad \mbox{for a.e. } x \in \R^n \mbox{ and any } r \in [-1, 1],
\end{equation}
for some~$W^* > 0$.

\smallskip

Since we are interested in modelling a periodic environment, we require both~$K$ and~$W$ to be periodic under integer translations. That is,
\begin{equation} \tag{K3} \label{Kper}
K(x + k, y + k) = K(x, y) \quad \mbox{for a.e. } x, y \in \R^n \mbox{ and any } k \in \Z^n,
\end{equation}
and
\begin{equation} \tag{W4} \label{Wper}
W(x + k, r) = W(x, r) \quad \mbox{for a.e. } x \in \R^n \mbox{ and any } k \in \Z^n,
\end{equation}
for any fixed~$r \in \R$.

\smallskip

The assumptions listed above allow us to comprise a very general class of kernels and potentials.

As possible choices for~$K$, we could indeed think of heterogeneous, isotropic kernels of the type
$$
K(x, y) = \frac{a(x, y)}{|x - y|^{n + 2 s}},
$$
for a measurable~$a: \R^n \times \R^n \to [\lambda, \Lambda]$, or instead consider a translation invariant, but anisotropic~$K$, as given by
$$
K(x, y) = \frac{1}{\| x - y \|^{n + 2 s}},
$$
with~$\| \cdot \|$ a measurable norm in~$\R^n$. Furthermore, one can combine both heterogeneity and anisotropy to obtain, for instance, kernels of the form
$$
K(x, y) = \frac{1}{ \langle A(x, y) (x - y), (x - y) \rangle^{\frac{n + 2 s}{2}} },
$$
where~$A$ is a symmetric, uniformly elliptic~$n \times n$ matrix with bounded entries.

Of course, the functions~$a$ and~$A$ should satisfy appropriate symmetry and periodicity conditions, in order that hypotheses~\eqref{Ksimm} and~\eqref{Kper} could be fulfilled by the resulting~$K$'s. Also, such functions may exhibit a
degenerate behavior when~$x$ and~$y$ are far from each other (compare this with the left-hand side of~\eqref{Kbounds}).

\smallskip

Important examples of admissible potentials~$W$ are given by
\begin{equation} \label{Wexample}
W(x, r) = Q(x) \left| 1 - r^2 \right|^d \quad \mbox{or} \quad W(x, r) = Q(x) \left( 1 + \cos \pi r \right),
\end{equation}
with~$d > 1$ and~$Q$ a positive periodic function.\footnote{When comparing these assumptions with those usually found in the related literature on local functionals, see e.g.~\cite{CC95,CC06} or~\cite{V04}, one realizes that the parameter~$d$ is asked there to range in the interval~$(0, 2]$. This is due
essentially to the fact that our proofs do not rely on the density estimates established in those papers, but on some H\"{o}lder regularity results.

If on the one hand this enables us to consider extremely flat potentials near the zeroes~$-1$ and~$1$, which can be obtained by taking~$d > 2$, on the other hand the Lipschitz continuity needed on~$W$ for the regularity results to apply imposes the bound~$d > 1$. This is due to the fact that our regularity theory is really designed for solutions to integro-differential equations, instead of minimizers.

Note added in proof: see Section~\ref{AiPsec} for a discussion around the possibility of circumventing this issue and considering the whole array of exponents~$d > 0$.

} 
By taking~$W(x,r):=Q(x)(1-r^2)^2$
and~$K(x,y):=|x-y|^{-n-2s}$, one obtains that
the critical points of the energy functional
satisfy the model equation in~\eqref{EXAMPLE}
(up to normalization constants).
\medskip

In the present work we look for minimizers of the functional~$\E$ which connects the two pure phases~$-1$ and~$1$, which are
the zeroes of the potential~$W$. In particular, given any vector~$\omega \in \R^n \setminus \{ 0 \}$, we  address the existence of minimizers for which, roughly speaking, \emph{most} of the transition between the pure states occurs in a strip orthogonal to~$\omega$ and of universal width. Moreover, when~$\omega$ is a rational vector, we want our minimizers to exhibit some kind of periodic behavior, consistent with that of the ambient space.

\smallskip

Note that we will often call a quantity \emph{universal} if it depends at most on~$n$,~$s$,~$\lambda$,~$\Lambda$,~$W^*$ and on the function~$\gamma$ introduced in~\eqref{Wgamma}.

\medskip

In order to formulate an exact statement, we introduce the following terminology. For a given~$\omega \in \Q^n \setminus \{ 0 \}$, we consider in~$\R^n$ the relation~$\sim_\omega$ defined by setting
\begin{equation} \label{simrel}
x \sim_\omega y \quad \mbox{if and only if} \quad y - x = k \in \Z^n, \mbox{ with } \omega \cdot k = 0.
\end{equation}
Notice that~$\sim_\omega$ is an equivalence relation and that the associated quotient space
$$
\widetilde{\mathbb{R}}^n_\omega := \R^n / \sim_\omega,
$$
is topologically the Cartesian product of an~$(n - 1)$-dimensional torus and a line. We say that a function~$u: \R^n \to \R$ is \emph{periodic with respect to~$\sim_\omega$}, or simply~\emph{$\sim_\omega$-periodic}, if~$u$ respects the equivalence relation~$\sim_\omega$, i.e. if
$$
u(x) = u(y) \quad \mbox{for any } x, y \in \R^n \mbox{ such that } x \sim_\omega y.
$$
When no confusion may arise, we will indicate the relation~$\sim_\omega$ just by~$\sim$ and the resulting quotient space by~$\tRn$.

\smallskip

To specify the notion of minimizers that we take into consideration,
we need to introduce an appropriate localized energy functional.
Given a set~$\Omega \subseteq \R^n$ and a function~$u: \R^n \to \R$, we define the \emph{total energy}~$\E$ of~$u$ in~$\Omega$ as
\begin{equation} \label{EOmegadef}
\E(u; \Omega) := \frac{1}{2} \iint_{\C_\Omega} \left| u(x) - u(y) \right|^2 K(x, y) \, dx dy + \int_{\Omega} W(x, u(x)) \, dx,
\end{equation}
where
\begin{equation} \label{COmega}
\begin{aligned}
\C_\Omega := & \left( \R^n \times \R^n \right) \setminus \left( \left( \R^n \setminus \Omega \right) \times \left( \R^n \setminus \Omega \right) \right) \\
= & \left( \Omega \times \Omega \right) \cup \left( \Omega \times \left( \R^n \setminus \Omega \right) \right) \cup \left( \left( \R^n \setminus \Omega \right) \times \Omega \right).
\end{aligned}
\end{equation}
Notice that when~$\Omega$ is the whole space~$\R^n$, then the energy~\eqref{EOmegadef} coincides with that anticipated in~\eqref{Edef}.

Sometimes, a more flexible notation for this functional will turn out to be useful. To this aim, recalling our symmetry assumption~\eqref{Ksimm} on~$K$, we will refer to~$\E(u; \Omega)$ as the sum of the \emph{kinetic part}\footnote{We stress that the name~\emph{kinetic} does not hint at actual physical motivations. In fact, in the applications~$\K$ is typically used to describe non-local interactions and elastic forces. However, we adopt this slight abuse of terminology in conformity with the classical jargon used for local Dirichlet energies
in particle mechanics. It is of course an interesting problem to 
study also
more general types of kinetic energies, such as the ones which lead
to quasilinear fractional equations, having an
integrability growth different than quadratic, see e.g.~\cite{DKV16, BL17} and the references therein.}
$$
\K(u; \Omega, \Omega) + 2 \K(u; \Omega, \R^n \setminus \Omega),
$$
with
$$
\K(u; U, V) := \frac{1}{2} \int_U \int_V |u(x) - u(y)|^2 K(x, y) \, dx dy,
$$
for any~$U, V \subseteq \R^n$, and the \emph{potential part}
$$
\P(u; \Omega) := \int_\Omega W(x, u(x)) \, dx.
$$

With this in hand, the notion of minimization inside a bounded set is described by the following
\begin{definition} \label{mindef}
Let~$\Omega$ be a bounded subset of~$\R^n$. A function~$u$ is said to be a~\emph{local minimizer} of~$\E$ in~$\Omega$ if~$\E(u; \Omega) < +\infty$ and
\begin{equation} \label{mindefeq}
\E(u; \Omega) \le \E(v; \Omega),
\end{equation}
for any~$v$ which coincides with~$u$ in~$\R^n \setminus \Omega$.
\end{definition}

For simplicity, in Definition~\ref{mindef}
and throughout the paper we assume every set and every function to be measurable, even if it is not explicitly stated.

\begin{remark} \label{restrmk}
We point out that a minimizer~$u$ on~$\Omega$ is also a minimizer on every subset of~$\Omega$. Though not obvious, this property is easily justified as follows.

Let~$\Omega' \subset \Omega$ be measurable sets and~$v$ be a function coinciding with~$u$ outside~$\Omega'$. Recalling the notation introduced in~\eqref{COmega}, it is immediate to check that~$\C_{\Omega'} \subset \C_{\Omega}$ and
$$
\C_\Omega \setminus \C_{\Omega'} = \left( \left( \Omega \setminus \Omega' \right) \times \left( \Omega \setminus \Omega' \right) \right) \cup \left( \left( \Omega \setminus \Omega' \right) \times \left( \R^n \setminus \Omega \right) \right) \cup \left( \left( \R^n \setminus \Omega \right) \times \left( \Omega \setminus \Omega' \right) \right).
$$
Therefore, it follows that the integrands of the kinetic parts of~$\E(u; \Omega)$ and~$\E(v; \Omega)$ coincide on~$\C_\Omega \setminus \C_{\Omega'}$. Since also the respective arguments of the potential terms are equal on~$\Omega \setminus \Omega'$, by~\eqref{mindefeq} we conclude that
\begin{align*}
\E(u; \Omega') & = \E(u; \Omega) - \frac{1}{2} \iint_{\C_\Omega \setminus \C_{\Omega'}} |u(x) - u(y)|^2 K(x, y) \, dx dy - \P(u; \Omega \setminus \Omega') \\
& \le \E(v; \Omega) - \frac{1}{2} \iint_{\C_\Omega \setminus \C_{\Omega'}} |v(x) - v(y)|^2 K(x, y) \, dx dy - \P(v; \Omega \setminus \Omega') \\
& = \E(v; \Omega').
\end{align*}
Thus,~$u$ is a minimizer on~$\Omega'$.
\end{remark}

Up to now we only discussed about local minimizers. Since we plan to construct functions which exhibit minimizing properties on the full space, we need to be precise on how we mean to extend Definition~\ref{mindef} to
the whole of~$\R^n$ (where the total
energy functional may be divergent).

\begin{definition} \label{classAdef}
A function~$u$ is said to be a~\emph{class~A minimizer} of the functional~$\E$ if it is a minimizer of~$\E$ in~$\Omega$, for any bounded set~$\Omega \subset \R^n$.
\end{definition}

\medskip

Now that all the main ingredients have been introduced, we are ready to state formally the main result of the paper.

\begin{theorem} \label{mainthm}
Let~$n \ge 2$ and~$s \in (0, 1)$. Assume that the kernel~$K$ and the potential~$W$ satisfy~\eqref{Ksimm},~\eqref{Kbounds},~\eqref{Kper} and~\eqref{Wzeros},~\eqref{Wgamma},~\eqref{Wbound},~\eqref{Wper}, respectively.\\
For any fixed~$\theta \in (0, 1)$, there is a constant~$M_0 > 0$, depending only on~$\theta$ and on universal quantities, such that, given any~$\omega \in \R^n \setminus \{ 0 \}$, there exists a class~A minimizer~$u_\omega$ of the energy~$\E$ for which the level set~$\{ |u_\omega| < \theta \}$ is contained in the strip
$$
\left\{ x \in \R^n : \frac{\omega}{|\omega|} \cdot x \in [0, M_0] \right\}.
$$
Moreover,
\begin{enumerate}[$\bullet$]
\item if~$\omega \in \Q^n \setminus \{ 0 \}$, then~$u_\omega$ is periodic with respect to~$\sim_\omega$, while
\item if~$\omega \in \R^n \setminus \Q^n$, then~$u_\omega$ is the uniform limit on compact subsets of~$\R^n$ of a sequence of periodic class~A minimizers.
\end{enumerate}
\end{theorem}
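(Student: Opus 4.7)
The plan is to first handle the rational case $\omega \in \Q^n \setminus \{0\}$ and then recover the general case by approximation. For rational $\omega$, I would minimize the energy $\E$ in the admissible class $\A$ of $\sim_\omega$-periodic functions $u: \R^n \to [-1, 1]$ satisfying $u \equiv +1$ when $\omega \cdot x / |\omega| \geq M$ and $u \equiv -1$ when $\omega \cdot x / |\omega| \leq 0$, where $M$ is a large parameter. By the direct method (fractional Gagliardo seminorms are weakly lower semicontinuous, the potential term is lower semicontinuous by Fatou, and admissible functions are uniformly bounded), a minimizer $\u$ exists on each fundamental cell of the quotient $\tRn$, and truncation at $\pm 1$ ensures the values stay in $[-1, 1]$.

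The crux is the \emph{plane-like estimate}: there should exist a universal $M_0 > 0$ such that, whenever $M \geq M_0$, the interface set $\{|\u| < \theta\}$ lies in a strip of thickness $M_0$ orthogonal to $\omega$, independently of $M$. I expect this to be the hardest step. Arguing by contradiction, assume the interface escapes a thick slab near the upper constraint; using the Euler-Lagrange equation satisfied by $\u$ together with H\"older regularity for solutions of the associated integro-differential equation (as foreshadowed by the authors' footnote), one deduces that $\{|\u| < \theta\}$ has uniformly positive density inside a tall layer. A competitor is then built by gluing $\u$ to $+1$ through a cut-off supported there: the potential saving is bounded below by $\gamma(\theta)$ times the volume of the contact region, while the non-local kinetic cost must be controlled by splitting the double integrals into short- and long-range contributions and invoking \eqref{Kbounds}. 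For a sufficiently tall layer, the volume term dominates the surface-like term, contradicting minimality.

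Once the plane-like estimate is in place, a Birkhoff / no-crossing property under integer translations---obtained by checking that, for $k \in \Z^n$, both $\max(\u(\cdot), \u(\cdot - k))$ and $\min(\u(\cdot), \u(\cdot - k))$ are admissible and satisfy the two-function inequality $\E(\max) + \E(\min) \leq \E(\u) + \E(\u(\cdot - k))$ by the periodicity assumptions \eqref{Kper} and \eqref{Wper}---implies that the strip bound is stable as $M \to \infty$ and that $\u \equiv \pm 1$ outside a strip of width $M_0$. Any compactly supported perturbation is then admissible for $M$ large enough, so $\u$ is in fact a class~A minimizer. Finally, for irrational $\omega$ I would choose rationals $\omega_j$ with $\omega_j / |\omega_j| \to \omega / |\omega|$; the corresponding class~A minimizers $u_{\omega_j}$ have interfaces in universal strips of width $M_0$ and are equicontinuous on compact sets by H\"older regularity, so along a subsequence they converge locally uniformly to a class~A minimizer whose interface lies in the desired strip orthogonal to $\omega$, completing the proof.
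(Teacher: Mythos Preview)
Your outline follows the broad architecture of the paper, but it misses three structural obstacles that the paper has to work hard to overcome, and without them your proof would not go through.

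\textbf{Finiteness of the periodic energy.} You propose to minimize directly in a class of $\sim_\omega$-periodic functions with $u\equiv\pm1$ outside the strip. For $s\le 1/2$, however, the natural periodic functional (the paper's $\F$, integrating over a fundamental domain crossed with $\R^n$) is \emph{identically} $+\infty$ on the admissible class: the long-range tail interaction between the regions $\{u\ge 9/10\}$ and $\{u\le -9/10\}$ diverges (see Lemma~A.1). The direct method therefore cannot start. The paper circumvents this by first truncating $K$ to a rapidly decaying kernel satisfying an auxiliary hypothesis~(K4), running the entire construction, and only at the very end removing the truncation by a limit argument (Section~5); crucially, none of the universal constants obtained depend on the truncation parameters. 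Your plan omits this step entirely.

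\textbf{Periodic versus compact minimization.} In the non-local setting, an absolute minimizer of the periodic functional $\F$ among $\sim_\omega$-periodic competitors is \emph{not} automatically a local minimizer of $\E$ with respect to compactly supported perturbations: if $\varphi$ is supported in a fundamental domain $D$ and $\tilde\varphi$ is its periodic extension, then $\E(u+\varphi;D)\ne\E(u+\tilde\varphi;D)$ in general. The paper establishes the needed implication through an explicit identity relating $\E$ and $\F$ (Lemma~4.4.1), picking up a cross term $\int\int \tilde\varphi(x)\tilde\varphi(y)K(x,y)\,dxdy$ whose sign must be handled via the min/max decomposition of $\varphi$. Your proposal treats this passage as immediate (``any compactly supported perturbation is then admissible''), which it is not.

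\textbf{Birkhoff ordering.} The two-function inequality $\E(\max)+\E(\min)\le\E(u)+\E(\tau_k u)$ only shows that $\min$ and $\max$ are again minimizers; it does not by itself yield $\tau_k u\le u$ or $\tau_k u\ge u$. The paper obtains genuine monotonicity by singling out the \emph{minimal minimizer}---the pointwise infimum of all absolute minimizers, shown to be itself a minimizer via separability of $L^p_\loc$ and Fatou---and proving that \emph{this} special element enjoys the Birkhoff property and the doubling (no-symmetry-breaking) property. These features are what ultimately let one conclude that the constraint becomes inactive for $M\ge M_0|\omega|$ and that the constrained minimizer is in fact class~A. Your proposal has no analogue of this device.
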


We remark that Theorem~\ref{mainthm} is new even in the model
case in which~$W(x,r):=Q(x)(1-r^2)^2$
and~$K(x,y):=|x-y|^{-n-2s}$. In this case, Theorem~\ref{mainthm}
provides solutions of equation~\eqref{EXAMPLE} (up to normalizing
constants).\medskip

In the local case - which formally corresponds to taking~$s = 1$ and can be effectively realized by replacing our kinetic term with the Dirichlet-type energy
\begin{equation} \label{localkin}
\int \langle A(x) \nabla u(x), \nabla u(x) \rangle \, dx,
\end{equation}
where~$A$ is a bounded, uniformly elliptic matrix - the result contained in Theorem~\ref{mainthm} was proved by the second author in~\cite{V04}. After this, several generalizations were obtained, extending such result in many directions. See, for instance,~\cite{PV05,NV07,dlLV07,BV08} and~\cite{D13}. We also mention the pioneering work~\cite{CdlL01} of Caffarelli and de la Llave, where the two authors proved the existence of plane-like minimal surfaces with respect to periodic metrics of~$\R^n$.

We stress that, if we restrict to the case given by~$K(x, y) := (1 - s) |x - y|^{- n - 2s}$, it can be proved that Theorem~\ref{mainthm} is stable as~$s$ approaches~$1$. As a consequence, by taking this limit one may deduce from it~\cite[Theorem~8.1]{V04}, at least for the model case of~$A$ equal to the identity matrix in~\eqref{localkin}. We refer the interested reader to Section~\ref{sto1sec} for a rigorous presentation of these arguments.

\medskip

The proof of Theorem~\ref{mainthm} 
makes use of a geometric and variational
technique developed in~\cite{CdlL01} and~\cite{V04}, suitably adapted
in order to deal with non-local interactions.
For a given rational direction~$\omega \in \Q^n \setminus \{ 0 \}$ and a fixed strip
$$
\S := \left\{ x \in \R^n : \omega \cdot x \in [0, M] \right\},
$$
with~$M > 0$, one takes advantage of the identifications of the quotient space~$\tRn$ to gain the compactness needed to obtain a minimizer~$\u$ with respect to periodic perturbations supported inside~$\S$. By construction, this minimizer is such that its interface~$\{|\u| < \theta \}$ is contained in the strip~$\S$.

With the aid of some geometrical arguments, one then shows that~$\u$ becomes a class~A minimizer for~$\E$, provided~$M / |\omega|$ is larger than some universal parameter~$M_0$. The fact that the threshold~$M_0$ is universal and that, in particular, it does not depend on the fixed direction~$\omega$ is of key importance here
and it allows, as a byproduct,
to obtain the result for an irrational
vector~$\omega \in \R^n \setminus \Q^n$, by taking
the limit of rational directions.
\medskip

We remark that the non-local character of the energy~$\E$ introduces several
challenging difficulties into the above scheme.

\smallskip

First of all, the way the compactness is used to construct the minimizer~$\u$ is somehow not as straightforward as in the local case.

To have a glimpse of this difference, consider that in~\cite{V04} the candidate~$\u$ is by definition a minimizer with respect to~$\sim$-periodic perturbations occurring in~$\S$. That is, one really considers the energy~$\E$ driven by~\eqref{localkin} as defined on the cylinder~$\tRn$ viewed as a manifold and obtain~$\u$ as the absolute minimizer of~$\E$ within a particular class of functions defined on~$\tRn$. However, since the restriction of the local kinetic term~\eqref{localkin} to a fundamental domain of~$\tRn$ only~\emph{sees} what happens inside that domain, it is clear that one is allowed in the local case
to identify periodic perturbations and perturbations which are compactly supported inside~$\tRn$. As a result,~$\u$ is automatically a local minimizer for~$\E$ in the strip~$\S$.

As it is, this technique cannot work in the
non-local setting. Indeed, let~$u$ be any~$\sim$-periodic function and~$\varphi$ be compactly supported in a fixed fundamental region~$D$ of~$\tRn$: if we denote by~$\tilde{\varphi}$ the~$\sim$-periodic extension of~$\varphi|_D$ to~$\R^n$, then the two quantities~$\E(u + \varphi; D)$ and~$\E(u + \tilde{\varphi}; D)$, as defined in~\eqref{EOmegadef}, are not equal in general.

In order to overcome this difficulty, we introduce an appropriate auxiliary functional~$\F$ that is used to define the periodic minimizer~$\u$. Then, it happens that~$\u$ is a local minimizer for the original energy~$\E$, since~$\F$ couples with~$\E$ in a favorable way.

\smallskip

An additional
difficulty comes from the different asymptotic properties
of the energy in terms of the fractional parameter~$s$.
As a matter of fact, the threshold~$s=1/2$
distinguishes the local and non-local behavior of the functional
at a large scale (see~\cite{SV12, SV14}) and it reflects
into the finiteness or infiniteness of the energy of
the one-dimensional transition layer.
In our setting, this feature implies that
not all the kernels~$K$ satisfying~\eqref{Kbounds} can be dealt with at the same time. More precisely, when~$s \le 1 / 2$ the behavior at infinity dictated by~\eqref{Kbounds} causes
infinite contributions coming from far. For this reason, at least at a first glance,
it may seem necessary
to restrict the class of admissible kernels by imposing some additional requirements on the decay of~$K$ at infinity.
However, we will be able to remove 
this limitation by an appropriate limit procedure.
Namely, we will first assume a fast decay property
of the kernel to obtain the existence of a class~A minimizer,
but the estimates obtained will be independent of this additional
assumption.
Consequently, we will be able to extend the result to general kernels by treating them as limits of truncated ones.

\smallskip

Finally, we want to point out a possibly interesting difference between the proof displayed here and that of e.g.~\cite{CdlL01} and~\cite{V04}. In the existing literature, the technique that is typically adopted to show that~$\u$ is a class~A minimizer relies on the so-called energy and density estimates.

These estimates respectively deal with the growth of the energy~$\E$ of a local minimizer~$u$ inside \emph{large} balls and the fractions of such balls occupied by a fixed level set of~$u$. The latter, in particular, is a powerful tool first introduced by Caffarelli and C\'ordoba in~\cite{CC95} to study the uniform convergence of the level sets of a family of \emph{scaled} minimizers.

Although such density estimates have been established in~\cite{SV14} in a non-local setting very close to ours, for some technical reasons we decided not to incorporate them into our argument
(roughly speaking, the periodic setting is not immediately
compatible with large balls in Euclidean spaces). In their place, we take advantage of some~$C^{0, \alpha}$ bounds satisfied by local minimizers of~$\E$, along with a suitable version of the energy estimates.

The above mentioned H\"{o}lder continuity result is essentially the regularity theory for bounded weak solutions to integro-differential equations developed by Kassmann in~\cite{K09,K11}. On the other hand, energy estimates for minimizers of non-local energies have been independently obtained in~\cite{CC14} and~\cite{SV14} (in different settings). Since both these two results were set in a slightly different framework than ours, we provide their proofs in full details in Sections~\ref{regsec} and~\ref{enestsec}, respectively.

\medskip

The paper is organized as follows.
Sections~\ref{regsec} and~\ref{enestsec} are devoted to the H\"{o}lder regularity of the minimizers and the energy estimates. We stress that in these two sections both~$K$ and~$W$ are subjected to slightly more general requirements than those listed in the introduction
(the statements of the results proved in these sections will contain the precise hypotheses needed for their proofs).

Section~\ref{s>1/2sec} is occupied by the main construction leading to the proof of Theorem~\ref{mainthm}. For the reader's ease, this section is in turn divided into seven short subsections. 
In each of these subsections, we will consider, respectively:
\begin{itemize}
\item the minimization arguments by compactness,
\item the notion of minimal minimizer (i.e. the pointwise infimum
of all the possible minimizers, which satisfy additional
geometric and functional features),
\item the doubling property (roughly, doubling the period does not
change the minimal minimizer), 
\item the notion of minimization
under compact perturbations, 
\item the Birkhoff property
(namely, the level sets of the minimal minimizers are ordered
by integer translations),
\item the passage from constrained to unconstrained minimization
(for large strips, we show that the constraint is irrelevant),
\item the passage from rational to irrational slopes.
\end{itemize}
The argument displayed 
in Section~\ref{s>1/2sec}
only works under an additional assumption on the 
decay rate of the kernel~$K$ at infinity. In the subsequent 
Section~\ref{s<1/2sec} we will show that this hypothesis can be in fact 
removed by a limit procedure. The proof of Theorem~\ref{mainthm} will therefore be completed.

In Section~\ref{sto1sec} we discuss about the stability of Theorem~\ref{mainthm} in some particular cases, as the fractional order~$s$ of the kinetic term goes to~$1$.

We conclude this paper
with two appendices which contain some auxiliary material
needed for the technical steps in the proofs of our main results.

\section{Regularity of the minimizers} \label{regsec}

In this introductory section we show that the local minimizers of~$\E$ are H\"{o}lder continuous functions. In order to do this, we prove a general regularity result for bounded solutions to non-local equations driven by measurable kernels comparable to that of the fractional Laplacian.

In this regard, we stress that the main result of this section - namely, Theorem~\ref{holderthm} - is stated in a broader setting, in comparison with the rest of the paper. The periodicity of the medium does not play any role here and it is therefore not assumed. 

We point out that, while we do not obtain uniform estimates as~$s \rightarrow 1^-$, our result is still independent of~$s$, as long as~$s$ is far from~$0$ and~$1$.

\medskip

Let~$0 < s < 1$ and~$\Omega$ be a bounded open set of~$\R^n$. Let~$K$ be a measurable kernel satisfying~\eqref{Ksimm} and~\eqref{Kbounds}. We now introduce the space of solutions~$X(\Omega)$. Given a measurable function~$u: \R^n \to \R$, we say that~$u \in X(\Omega)$ if and only if
$$
u|_\Omega \in L^2(\Omega) \quad \mbox{and} \quad (x, y) \longmapsto \left( u(x) - u(y) \right) \sqrt{K(x, y)} \in L^2(\C_\Omega).
$$
It is not difficult to see that~\eqref{Kbounds} implies that~$H^s(\R^n) \subset X(\Omega) \subseteq H^s(\Omega)$. We also denote by~$X_0(\Omega)$ the subspace of~$X(\Omega)$ made up by the functions which vanish a.e. outside~$\Omega$. Then~$X_0(\Omega') \subseteq X_0(\Omega) \subset H^s(\R^n)$, if~$\Omega' \subseteq \Omega$. We refer the reader to~\cite[Section~5]{SerV13}, where some useful properties of these spaces are discussed.

We consider the non-local Dirichlet form
\begin{equation} \label{DKdef}
\D_K(u, \varphi) = \int_{\R^n} \int_{\R^n} \left( u(x) - u(y) \right) \left( \varphi(x) - \varphi(y) \right) K(x, y) \, dx dy.
\end{equation}
Observe that~$\D_K$ is well-defined for instance when~$u \in X(\Omega)$ and~$\varphi \in X_0(\Omega)$.

Let now~$f \in L^2(\Omega)$. We say that~$u \in X(\Omega)$ is a \emph{supersolution} of the equation
\begin{equation} \label{ELeq}
\D_K(u, \cdot) = f \quad \mbox{in } \Omega,
\end{equation}
if
\begin{equation} \label{ELsupsol}
\D_K(u, \varphi) \ge \langle f, \varphi \rangle_{L^2(\R^n)} \quad \mbox{for any non-negative } \varphi \in X_0(\Omega).
\end{equation}
Analogously, one defines \emph{subsolutions} of~\eqref{ELeq} by reverting the inequality in~\eqref{ELsupsol} and, as well, \emph{solutions} by asking~\eqref{ELsupsol} to be an identity and neglecting the sign assumption on~$\varphi$. It is almost immediate to check that a function~$u$ is a solution of~\eqref{ELeq} if and only if it is at the same time a super- and a subsolution.

\medskip

The main result of the section is given by the following

\begin{theorem} \label{holderthm}
Let~$\Omega$ be a bounded open set of~$\R^n$, with~$n \ge 2$, and~$s_0 \in (0, 1/2)$ be a fixed parameter. Let~$s \in [s_0, 1 - s_0]$ and~$K$ be a measurable kernel satisfying~\eqref{Ksimm} and~\eqref{Kbounds}. If~$f \in L^\infty(\Omega)$ and~$u \in X(\Omega) \cap L^\infty(\R^n)$ is a solution of~\eqref{ELeq} in~$\Omega$, then there exists an exponent~$\alpha \in (0, 1)$, only depending on~$n$,~$s_0$,~$\lambda$ and~$\Lambda$, such that
$$
u \in C^{0, \alpha}_\loc(\Omega).
$$
In particular, there exists a number~$R_0 > 0$, depending only on~$n$,~$s_0$,~$\lambda$ and~$\Lambda$, such that, for any point~$x_0 \in \Omega$ and any radius~$0 < R \le R_0$ for which~$B_R(x_0) \subset \Omega$, it holds
\begin{equation} \label{holderine}
\osc_{B_r(x_0)} u \le 16 \left( \frac{r}{R} \right)^\alpha \Big[ \| u \|_{L^\infty(\R^n)} + \| f \|_{L^\infty(B_R(x_0))} \Big],
\end{equation}
for any~$0 < r < R$.
\end{theorem}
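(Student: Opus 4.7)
The plan is to reduce the H\"older estimate~\eqref{holderine} to a single universal oscillation-decay inequality, which can then be iterated dyadically. After translating to~$x_0=0$ and rescaling, the problem becomes the following: show that there exist universal constants~$\eta\in(0,1)$ and~$\tau\in(0,1/2)$, depending only on~$n,s_0,\lambda,\Lambda$, such that whenever~$v\in X(B_1)\cap L^\infty(\R^n)$ satisfies~$\|v\|_{L^\infty(\R^n)}\le 1$ and solves~$\D_{\tilde K}(v,\cdot)=\tilde f$ in~$B_1$ with~$\|\tilde f\|_{L^\infty}\le\eta/2$, one has
$$
\osc_{B_\tau} v \le (1-\eta)\,\osc_{B_1} v.
$$
To link this with~\eqref{holderine}, I would fix~$x_0\in\Omega$ and~$0<R\le R_0$ with~$B_R(x_0)\subset\Omega$, set~$M:=\|u\|_{L^\infty(\R^n)}+\|f\|_{L^\infty(B_R(x_0))}$, and apply the decay to the rescaled function~$v_0(x):=u(x_0+Rx)/M$. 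A short computation shows that~$v_0$ solves the equation on~$B_1$ driven by~$\tilde K(x,y):=R^{n+2s}K(x_0+Rx,x_0+Ry)$, which still satisfies~\eqref{Ksimm}--\eqref{Kbounds} with the same constants (provided~$R_0$ is chosen small enough to keep the truncation scale~$|x-y|\le 1$ on the left of~\eqref{Kbounds} under control), and with rescaled source of norm~$R^{2s}\|f\|_{L^\infty}/M\le R_0^{2s_0}$. Iterating the oscillation decay on the radii~$\tau^k$ then gives~\eqref{holderine} with H\"older exponent~$\alpha:=-\log(1-\eta)/\log(1/\tau)$.

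The heart of the argument is therefore the oscillation decay lemma, which I would deduce from a non-local De Giorgi-type growth lemma in the spirit of Kassmann~\cite{K09,K11}: if~$w\in X(B_1)\cap L^\infty(\R^n)$ is a nonnegative supersolution of~\eqref{ELeq} in~$B_1$ with bounded source, and if~$\left|\{w\ge1\}\cap B_{1/2}\right|\ge\sigma|B_{1/2}|$ for some~$\sigma>0$, then
$$
\inf_{B_{1/4}}w \ge \kappa,
$$
for a universal constant~$\kappa=\kappa(n,s_0,\lambda,\Lambda,\sigma)>0$. Given this, the oscillation decay follows by applying the growth lemma either to the translate~$(v-\inf_{B_1}v)/\osc_{B_1}v$ or to~$(\sup_{B_1}v-v)/\osc_{B_1}v$, according to which of the two nonnegative functions satisfies the measure condition on~$B_{1/2}$. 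Note that since~$\|v\|_{L^\infty(\R^n)}\le 1$, passing to a nonnegative function on all of~$\R^n$ by an appropriate truncation introduces only a controlled additional source term coming from the tail~$\int_{\R^n\setminus B_1}\!K(x,y)\,dy$, which is uniformly bounded by~\eqref{Kbounds}.

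The growth lemma itself would be proved via a Caccioppoli-type energy estimate for supersolutions, followed by a De Giorgi iteration on dyadically shrinking truncations~$(w-\ell_k)_{-}$. The distinctive non-local feature is that the bilinear form~\eqref{DKdef} generates, for every test function supported in a ball~$B\subset B_1$, an extra contribution of the form~$\int_{B}\!\int_{\R^n\setminus B}\varphi(x)^2 w_{-}(y)K(x,y)\,dx\,dy$; this tail is controlled uniformly in~$s\in[s_0,1-s_0]$ thanks to the bound~$K(x,y)\le\Lambda|x-y|^{-n-2s}$, which makes the relevant integrals summable with constants independent of~$s$ as long as~$s\ge s_0>0$. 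The iteration then yields a quantitative propagation-of-positivity statement: positivity on a set of definite density in~$B_{1/2}$ forces a definite positive lower bound on~$B_{1/4}$. Because no regularity of~$K$ is assumed, everything must be carried out by pure energy techniques and measure-theoretic covering arguments, with no recourse to pointwise computations.

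The main obstacle is precisely this non-local growth lemma: the entire difficulty lies in handling the tail contributions cleanly while keeping every constant universal and uniform in~$s\in[s_0,1-s_0]$. Once the lemma is in place, the oscillation decay and the dyadic scaling argument producing~\eqref{holderine} are essentially mechanical, and local H\"older continuity~$u\in C^{0,\alpha}_{\loc}(\Omega)$ follows by a standard Campanato-type characterization.
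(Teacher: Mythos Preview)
Your overall architecture (growth lemma $\Rightarrow$ oscillation decay $\Rightarrow$ dyadic iteration) is the same as the paper's, and the choice of De~Giorgi versus the paper's Moser/weak-Harnack route for the growth lemma is a legitimate variant. However, there is a genuine gap in how you handle the non-local tail during the iteration.

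You normalize by setting $w:=(v-\inf_{B_1}v)/\osc_{B_1}v$ and then claim that, since $\|v\|_{L^\infty(\R^n)}\le 1$, the truncation $w\mapsto w_+$ produces an extra source controlled by $\int_{\R^n\setminus B_1}K(x,y)\,dy$. But the negative part of $w$ outside $B_1$ is only bounded by $2/\osc_{B_1}v$, so the actual extra source is of order $(\osc_{B_1}v)^{-1}$, which is \emph{not} uniformly bounded. This already fails at the first step if $\osc_{B_1}v$ happens to be small, and it fails necessarily at step $k$ of the iteration, where the normalizing oscillation is $\sim(1-\eta)^k$ and the tail of the normalized function blows up like $(1-\eta)^{-k}$. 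The global bound $\|v\|_{L^\infty}\le 1$ is simply too crude once you divide by the shrinking oscillation.

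The paper's remedy is to build the growing tail directly into the growth lemma: Proposition~\ref{keyprop} allows $u(x)\ge -2\,(8|x|/2r)^\gamma+2$ outside $B_{2r}$, for a small exponent $\gamma>0$ chosen so that the resulting tail contribution $\int_{\R^n\setminus B_{2r}}u_-(y)K(x,y)\,dy$ is at most $\eta r^{-2s}$. The inductive scheme then keeps track of the oscillation at \emph{every} previous scale $j=0,\dots,k-1$ (not just the global $L^\infty$ bound), and uses these to verify that the rescaled function at step $k$ satisfies precisely this $|x|^\gamma$-growth tail condition; see~\eqref{vle}--\eqref{vge}. The H\"older exponent $\alpha$ is then forced to satisfy $\alpha\le\gamma$. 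This coupling between the admissible tail growth in the growth lemma and the decay rate in the iteration is the missing idea in your proposal; without it, the oscillation-decay lemma as you stated it is not provable from your growth lemma.
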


Theorem~\ref{holderthm} is an extension to non-local equations of the classical De Giorgi-Nash-Moser regularity theory. In recent years a great number of papers dealt with interior H\"{o}lder estimates for solutions of elliptic integro-differential equations, as for instance~\cite{S06,CS09,K09} and~\cite{K11}. See also the recent~\cite{DK15}, which contains related and very interesting regularity results, especially for the case of homogeneous equations. In our setting, we need estimates for equations with general right-hand sides, which apparently are not formally stated nor proved in the literature (although they can be deduced using the techniques of e.g.~\cite{K09} and~\cite{DK15}). Following the arguments of these papers, we provide here below a fully detailed and self-contained proof of these estimates.

Before advancing to the arguments that lead to Theorem~\ref{holderthm}, we point out how the regularity of the minimizers of~$\E$ can be recovered from it.

\begin{corollary} \label{minminholdercor}
Fix~$s_0 \in (0, 1/2)$ and let~$s \in [s_0, 1 - s_0]$. Let~$u$ be a bounded local minimizer of~$\E$ in a bounded open subset~$\Omega$ of~$\R^n$. Then,~$u \in C_\loc^{0, \alpha}(\Omega)$, for some~$\alpha \in (0, 1)$. The exponent~$\alpha$ only depends on~$n$,~$s_0$,~$\lambda$ and~$\Lambda$, while the~$C^{0, \alpha}$ norm of~$u$ on any~$\Omega' \subset \subset \Omega$ may also depend on~$\| u \|_{L^\infty(\R^n)}$,~$\| W_r(\cdot, u) \|_{L^\infty(\Omega)}$ and~$\dist \left( \Omega', \partial \Omega \right)$.
\end{corollary}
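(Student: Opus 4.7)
The plan is to show that $u$ solves, in the weak sense of~\eqref{ELsupsol}, the Euler-Lagrange equation associated to $\E$ with right-hand side $f := -W_r(\cdot, u)$, and then to invoke Theorem~\ref{holderthm} directly.

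\emph{Derivation of the Euler-Lagrange equation.} Fix $\varphi \in C_c^\infty(\Omega)$ and $t \in \R$. Since $u + t\varphi$ coincides with $u$ outside $\Omega$, Definition~\ref{mindef} gives $\E(u; \Omega) \le \E(u + t\varphi; \Omega)$. Expanding the kinetic part and using that $\varphi(x) - \varphi(y) = 0$ whenever $(x,y) \notin \C_\Omega$ (so the linear-in-$t$ cross integral equals the full Dirichlet form $\D_K(u, \varphi)$), together with a Taylor expansion of $W(x, u + t\varphi)$ in the second variable using~\eqref{Wbound} and the assumed local boundedness of $W_r$, one finds
\begin{equation*}
\E(u + t\varphi; \Omega) - \E(u; \Omega) = t \bigl[ \D_K(u, \varphi) + \langle W_r(\cdot, u), \varphi \rangle_{L^2(\Omega)} \bigr] + O(t^2).
\end{equation*}
Letting $t \to 0^\pm$ forces the bracket to vanish. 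Now note that $u \in X(\Omega)$: indeed, $u|_\Omega \in L^2(\Omega)$ because $u$ is bounded and $\Omega$ is bounded, while $\iint_{\C_\Omega} |u(x) - u(y)|^2 K(x,y) \, dx dy$ is finite as part of the hypothesis $\E(u; \Omega) < +\infty$. Consequently, $\D_K(u, \cdot)$ is continuous on $X_0(\Omega)$, and the identity $\D_K(u, \varphi) = \langle -W_r(\cdot, u), \varphi \rangle_{L^2(\R^n)}$ extends from $C_c^\infty(\Omega)$ to all of $X_0(\Omega)$ by density (a standard property of these spaces; see e.g.~\cite{SV13}). Thus $u$ is a weak solution of~\eqref{ELeq} on $\Omega$ with $f := -W_r(\cdot, u) \in L^\infty(\Omega)$.

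\emph{Conclusion via Theorem~\ref{holderthm}.} We are exactly in the setting of that theorem: $u \in X(\Omega) \cap L^\infty(\R^n)$ and $f \in L^\infty(\Omega)$ with $\|f\|_{L^\infty(\Omega)} \le \|W_r(\cdot, u)\|_{L^\infty(\Omega)}$. Theorem~\ref{holderthm} therefore gives $u \in C^{0,\alpha}_\loc(\Omega)$ for some $\alpha \in (0, 1)$ depending only on $n$, $s_0$, $\lambda$, $\Lambda$. Quantitatively, for any $\Omega' \subset \subset \Omega$, applying~\eqref{holderine} at every $x_0 \in \Omega'$ with radius $R := \min\{ R_0, \dist(\Omega', \partial \Omega) \}$ produces a $C^{0,\alpha}$ bound on $\overline{\Omega'}$ controlled by $\|u\|_{L^\infty(\R^n)}$, $\|W_r(\cdot, u)\|_{L^\infty(\Omega)}$ and $\dist(\Omega', \partial \Omega)$, exactly as claimed.

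The only mildly delicate step is the term-by-term differentiation of $\E(u + t\varphi; \Omega)$ at $t = 0$: the kinetic piece is quadratic in $t$ and causes no trouble, while for the potential part one uses dominated convergence together with the local bound on $W_r$ and the fact that, $\varphi$ being bounded and compactly supported, $u + t\varphi$ remains in a fixed compact interval for small $t$. No essential obstacle is anticipated beyond this routine verification.
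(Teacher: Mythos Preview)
Your proof is correct and follows essentially the same route as the paper: derive the Euler-Lagrange equation by first variation, observe $u\in X(\Omega)$ from $\E(u;\Omega)<+\infty$ and $f=-W_r(\cdot,u)\in L^\infty(\Omega)$, then apply Theorem~\ref{holderthm} and a covering argument. You have simply spelled out in more detail what the paper summarizes as ``taking the first variation'' and ``a standard covering argument''.
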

\begin{proof}
Let~$u$ be a bounded local minimizer of~$\E$ in~$\Omega$. By taking the first variation of~$\E$, it is easy to see that~$u$ is a solution of the Euler-Lagrange equation~\eqref{ELeq} in~$\Omega$, with~$f = W_r(\cdot, u)$. Notice that~$u \in X(\Omega)$, since~$\E(u; \Omega)$ is finite. Moreover, being~$u \in L^\infty(\R^n)$ and~$W_r$ locally bounded, we obtain that~$f$ is also a bounded function in~$\Omega$. Thence, Theorem~\ref{holderthm} applies and yields the~$C^{0, \alpha}$ regularity of~$u$. The quantitative estimate of the H\"{o}lder norm of~$u$ on compact subsets of~$\Omega$ follows by applying~\eqref{holderine} along with a standard covering argument.
\end{proof}

The remaining part of the section is devoted to the proof of Theorem~\ref{holderthm}, which is based on the
Moser's iteration technique 
and some arguments in~\cite{K09,K11}. 

We begin with a lemma dealing with
non-negative supersolutions of~\eqref{ELeq}.

\begin{lemma} \label{JNlem}
Let~$f \in L^\infty(B_1)$ and~$u \in X(B_1)$ be a non-negative supersolution of~\eqref{ELeq} in~$B_1$. Suppose that
\begin{equation} \label{ugef+delta}
u(x) \ge \| f \|_{L^\infty(B_1)} + \delta \quad \mbox{for a.e. } x \in B_1,
\end{equation}
for some~$\delta > 0$. Then,
\begin{equation} \label{up0est}
\left( \dashint_{B_{1 / 2}} u(x)^{p_\star} \, dx \right)^{1 / p_\star} \le C_\star \left( \dashint_{B_{1 / 2}} u(x)^{- p_\star} \, dx \right)^{- 1 / p_\star},
\end{equation}
for some constant~$C_\star > 0$ and exponent~$p_\star \in (0, 1)$ which depend only on~$n$,~$s_0$,~$\lambda$ and~$\Lambda$.
\end{lemma}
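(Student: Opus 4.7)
The plan is to follow the Moser iteration strategy in the non-local setting, as in Kassmann~\cite{K09, K11}. Observe that~\eqref{up0est} can be rewritten, in averaged form, as $\|u\|_{L^{p_\star}(B_{1/2})} \cdot \|u^{-1}\|_{L^{p_\star}(B_{1/2})} \leq C$, which is essentially the statement that $w := \log u$ lies in $\mathrm{BMO}(B_{1/2})$. I would therefore derive a logarithmic Caccioppoli estimate for $w$, promote it to a BMO-type oscillation bound via a fractional Poincaré inequality, and conclude with the John-Nirenberg lemma.

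First I would test the supersolution inequality~\eqref{ELsupsol} against the non-negative function $\varphi := \eta^2 / u$, with $\eta$ a smooth cutoff supported in $B_{3/4}$ and equal to $1$ on $B_{1/2}$. The lower bound~\eqref{ugef+delta} is crucial at this stage: it ensures that $\varphi$ is admissible (belongs to $X_0(B_1)$, non-negative) and that $|f(x) \varphi(x)| \leq \eta(x)^2$, so the forcing term on the right-hand side of~\eqref{ELsupsol} is absorbable. Using the Moser-type algebraic inequality
$$
(u(x) - u(y))\bigl(\varphi(x) - \varphi(y)\bigr) \;\leq\; -c\,\eta(x)\eta(y)\,|w(x) - w(y)|^2 + C\,|\eta(x) - \eta(y)|^2
$$
on the region $\{\eta > 0\} \times \{\eta > 0\}$, and controlling the tail contributions from $B_{3/4} \times (\R^n \setminus B_{3/4})$ via the bounds $\delta \leq u \leq \|u\|_{L^\infty(\R^n)}$ and the upper bound in~\eqref{Kbounds}, one obtains the \emph{log-Caccioppoli estimate}
$$
\int_{B_{1/2}}\!\int_{B_{1/2}} |w(x) - w(y)|^2\, K(x,y)\, dx\, dy \;\leq\; C.
$$
Repeating the same argument on every sub-ball $B_r(x_0) \subset B_{1/2}$ with a rescaled cutoff produces a matching estimate locally.

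Combined with the lower bound $K(x,y) \geq \lambda |x-y|^{-n-2s}\chi_{[0,1]}(|x-y|)$ from~\eqref{Kbounds} and the fractional Poincaré inequality on balls, these local estimates yield (after a harmless rescaling of $u$ by a constant)
$$
\dashint_{B_r(x_0)} |w - (w)_{B_r(x_0)}|\, dx \;\leq\; C
$$
uniformly over all $B_r(x_0) \subset B_{1/2}$; that is, $w \in \mathrm{BMO}(B_{1/2})$ with a seminorm depending only on $n$, $s_0$, $\lambda$, $\Lambda$. The John-Nirenberg lemma then produces an exponent $p_\star > 0$ such that
$$
\dashint_{B_{1/2}} e^{\pm p_\star (w - (w)_{B_{1/2}})}\, dx \;\leq\; C,
$$
and multiplying the two resulting bounds recovers exactly~\eqref{up0est}.

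The main obstacle is the log-Caccioppoli estimate itself. Unlike in the local case, the Dirichlet form $\D_K(u, \varphi)$ is defined on the whole of $\R^n \times \R^n$, so one must carefully decompose the domain into a near-diagonal part, where the Moser algebraic inequality is directly usable, and a tail part, where only the pointwise bounds on $u$ and the upper bound on $K$ are available; making sure that the error from the tails does not spoil the favorable sign of the diagonal term is the heart of the argument. The hypothesis $u \geq \|f\|_{L^\infty(B_1)} + \delta$ plays a double role here: it keeps $\varphi = \eta^2/u$ admissible as a test function and simultaneously makes the $\int f \varphi$ term harmless. Everything else---the fractional Poincaré inequality and John-Nirenberg---are by now standard non-local adaptations of classical real-variable tools.
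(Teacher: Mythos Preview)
Your overall strategy coincides with the paper's: test~\eqref{ELsupsol} with $\varphi = \eta^2/u$ on each sub-ball to get a local $H^s$ bound on $\log u$, upgrade to a BMO bound via the fractional Poincar\'e inequality, and conclude with John--Nirenberg. The treatment of the forcing term through~\eqref{ugef+delta} is also correct.

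The gap is in your tail estimate. You propose to control the contribution from $\supp(\eta) \times (\R^n \setminus \supp(\eta))$ ``via the bounds $\delta \le u \le \|u\|_{L^\infty(\R^n)}$'', but the lemma does \emph{not} assume $u \in L^\infty(\R^n)$; it only requires $u \in X(B_1)$ and $u \ge 0$ in $\R^n$. Even if boundedness were added as a hypothesis, the constant $C_\star$ produced by your argument would depend on $\|u\|_{L^\infty(\R^n)}$ (and on $\delta$), contradicting the stated dependence on $n, s_0, \lambda, \Lambda$ alone. The correct argument, which the paper carries out, uses only the global non-negativity of $u$: for $y \in \supp(\eta)$ and $x \notin \supp(\eta)$ one has
\[
(u(x)-u(y))\Bigl(-\frac{\eta^2(y)}{u(y)}\Bigr) \;=\; \eta^2(y)\Bigl(1 - \frac{u(x)}{u(y)}\Bigr) \;\le\; \eta^2(y),
\]
since $u(x) \ge 0$ and $u(y) > 0$. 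Integrating against $K$ and invoking only the upper bound in~\eqref{Kbounds} then gives a tail contribution bounded by a universal multiple of $r^{n-2s}$, with no reference to $\|u\|_{L^\infty}$ or to $\delta$. This is exactly the place where the sign hypothesis on $u$ does the work; an $L^\infty$ bound is neither available nor needed.
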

\begin{proof}
We plan to show that~$\log u \in BMO(B_{1 / 2})$. To this aim, we claim that there exists a constant~$c_1 > 0$, depending only on~$n$,~$s_0$,~$\lambda$ and~$\Lambda$, such that
\begin{equation} \label{logclaim}
[\log u]_{H^s(B_r(z))} \le c_1 r^{- s + n / 2},
\end{equation}
holds true for any~$z \in B_{1 / 2}$ and~$r > 0$ for which~$B_r(z) \subseteq B_{1 / 2}$.

In order to prove~\eqref{logclaim}, we take a cut-off function~$\eta \in C^\infty_c(\R^n)$ satisfying~$0 \le \eta \le 1$ in~$\R^n$,~$\supp(\eta) = B_{3 r / 2}(z)$,~$\eta = 1$ in~$B_r(z)$ and~$|\nabla \eta| \le 4 r^{-1}$ in~$\R^n$. We test formulation~\eqref{ELsupsol} with~$\varphi := \eta^2 u^{-1}$. Note that~$\varphi \ge 0$ and~$\varphi \in X_0(B_1)$ thanks to the definition of~$\eta$ and condition~\eqref{ugef+delta}. Recalling~\eqref{Ksimm}, inequality~\eqref{ELsupsol} becomes
\begin{equation}\label{8bis}\begin{split}
\int_{B_{3 r / 2}(z)} \frac{f(x) \eta^2(x)}{u(x)} \, dx & \le \int_{B_{2 r}(z)} \int_{B_{2 r}(z)} \left( u(x) - u(y) \right) \left( \frac{\eta^2(x)}{u(x)} - \frac{\eta^2(y)}{u(y)} \right) K(x, y) \, dx dy \\
& \quad + 2 \int_{B_{2 r}(z)} \frac{\eta^2(y)}{u(y)} \left( \int_{\R^n \setminus B_{2 r}(z)} \left( u(y) - u(x) \right) K(x, y) \, dx \right) dy \\
& =: I_1 + 2 I_2.
\end{split}\end{equation}
For any~$x, y \in B_{2 r}(z)$ we compute
\begin{align*}
\left( u(x) - u(y) \right) \left( \frac{\eta^2(x)}{u(x)} - \frac{\eta^2(y)}{u(y)} \right) & = \eta^2(x) + \eta^2(y) - \frac{\eta^2(x) u(y)}{u(x)} - \frac{\eta^2(y) u(x)}{u(y)} \\
& = \left| \eta(x) - \eta(y) \right|^2 - \frac{\left| \eta(y) u(x) - \eta(x) u(y) \right|^2}{u(x) u(y)}.
\end{align*}
Hence, using~\eqref{Kbounds} together with the numerical inequality
$$
\left( \log a - \log b \right)^2 \le \frac{(a - b)^2}{a b},
$$
that holds for any~$a, b > 0$, we get\footnote{Throughout the paper, the symbol~$\alpha_n$ is used to denote the volume of the unit ball of~$\R^n$. That is,
$$
\alpha_n := \left| B_1 \right| = \frac{\pi^{n/2}}{\Gamma((n + 2)/2)}.
$$
Accordingly, the~$(n - 1)$-dimensional Hausdorff measure of the sphere~$\partial B_1$ is then given by~$\Haus^{n - 1}(\partial B_1) = n \alpha_n$.}
\begin{equation} \label{logI1}
\begin{aligned}
I_1 & = \int_{B_{2 r}(z)} \int_{B_{2 r}(z)} \left[ \left| \eta(x) - \eta(y) \right|^2 - \frac{\left| \eta(y) u(x) - \eta(x) u(y) \right|^2}{u(x) u(y)} \right] K(x, y) \, dx dy \\
& \le \frac{16 \Lambda}{r^2} \int_{B_{2 r}(z)} \int_{B_{2 r}(z)} \frac{dx dy}{|x - y|^{n - 2 + 2 s}} - \lambda \int_{B_r(z)} \int_{B_r(z)} \frac{\left| u(x) - u(y) \right|^2}{u(x) u(y)} \, \frac{dx dy}{|x - y|^{n + 2 s}} \\
& \le 2^{n + 4} n \alpha_n^2 \Lambda r^{n - 2} \int_0^{4 r} \rho^{1 - 2 s} \, d\rho - \lambda \int_{B_r(z)} \int_{B_r(z)} \frac{\left| \log u(x) - \log u(y) \right|^2}{|x - y|^{n + 2 s}} \, dx dy \\
& \le \frac{2^{n + 7} n \alpha_n^2 \Lambda}{s_0} \, r^{n - 2 s} - \lambda [\log u]_{H^s(B_r(z))}^2.
\end{aligned}
\end{equation}
On the other hand, by the non-negativity of~$u$ and again~\eqref{Kbounds} we estimate
\begin{equation} \label{logI2}
\begin{aligned}
I_2 & = \int_{B_{3 r / 2}(z)} \frac{\eta^2(y)}{u(y)} \left( \int_{\R^n \setminus B_{2 r}(z)} \left( u(y) - u(x) \right) K(x, y) \, dx \right) dy \\
& \le \Lambda \int_{B_{3 r / 2}(z)} \eta^2(y) \left( \int_{\R^n \setminus B_{2 r}(z)} |x - y|^{- n - 2 s} \, dx \right) dy \\
& \le \frac{2^{3 n + 1} n \alpha_n^2 \Lambda}{s_0} r^{n - 2 s}.
\end{aligned}
\end{equation}
Finally, using~\eqref{ugef+delta} we have
\begin{align*}
\int_{B_{3 r / 2}(z)} \frac{f(x) \eta^2(x)}{u(x)} \, dx \ge - \int_{B_{3 r / 2}(z)} \frac{|f(x)|}{u(x)} \, dx \ge - \frac{ \| f \|_{L^\infty(B_1)} |B_{3 r / 2}|}{\| f \|_{L^\infty(B_1)} + \delta} \ge - 2^n \alpha_n r^{n - 2 s},
\end{align*}
since~$r < 1$. Claim~\eqref{logclaim} then follows by combining this last equation with~\eqref{8bis},
\eqref{logI1} and~\eqref{logI2}.

We are now ready to show that~$\log u \in BMO(B_{1 / 2})$. For a bounded~$\Omega \subset \R^n$ and~$v \in L^1(\Omega)$, write
$$
(v)_\Omega := \frac{1}{|\Omega|} \int_\Omega v(x) \, dx.
$$
Applying both H\"{o}lder's and fractional Poincar\'e's inequality, from~\eqref{logclaim} we obtain
\begin{align*}
\| \log u - (\log u)_{B_r(z)} \|_{L^1(B_r(z))} & \le |B_r|^{1 / 2} \| \log u - (\log u)_{B_r(z)} \|_{L^2(B_r(z))} \\
& \le c_2 r^{s + n / 2} \, [\log u]_{H^s(B_r(z))} \\
& \le c_3 r^n,
\end{align*}
for some~$c_2, c_3 > 0$ which may depend on~$n$,~$s_0$,~$\lambda$ and~$\Lambda$. Since the above inequality holds for any~$B_r(z) \subseteq B_{1 / 2}$, we conclude that~$\log u \in BMO(B_{1 / 2})$.

Estimate~\eqref{up0est} then follows by the John-Nirenberg embedding in one of its equivalent forms (see, for instance, Theorem~6.25 of~\cite{GM12}). Observe that the exponent~$p_\star$ given by such result is of the form of a dimensional constant divided by the~$BMO(B_{1 / 2})$ semi-norm of~$\log u$. This norm being bounded from above by~$c_3$ and since we are free to make~$p_\star$ smaller if necessary, it turns out that we can choose~$p_\star \in (0, 1)$ to depend only on~$n$,~$s_0$,~$\lambda$ and~$\Lambda$.
\end{proof}

Next is the step of the proof in which the iterative argument really comes into play.

\begin{lemma} \label{moserlem}
Let~$f \in L^\infty(B_1)$ and~$u \in X(B_1)$ be a supersolution of~\eqref{ELeq} in~$B_1$. Assume that~$u$ satisfies~\eqref{ugef+delta}, for some~$\delta > 0$. Then, for any~$p_0 > 0$,
\begin{equation} \label{uinfp0est}
\inf_{B_{1 / 4}} u \ge c_\sharp \left( \dashint_{B_{1 / 2}} u(x)^{- p_0} \, dx \right)^{- 1 / p_0},
\end{equation}
for some constant~$c_\sharp > 0$ which may depend on~$n$,~$s_0$,~$\lambda$,~$\Lambda$ and~$p_0$.
\end{lemma}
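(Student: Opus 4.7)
The plan is to carry out a classical Moser iteration on negative powers of $u$. Since \eqref{ugef+delta} ensures $u \geq \delta > 0$, the function $v := u^{-1}$ is bounded, and \eqref{uinfp0est} is equivalent to the reverse H\"older bound
$$
\|v\|_{L^\infty(B_{1/4})} \leq C(p_0) \left(\dashint_{B_{1/2}} v^{p_0} \, dx\right)^{1/p_0}.
$$
It therefore suffices to prove a gain-of-integrability estimate and iterate.

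The key step is a Caccioppoli inequality for negative powers of $u$. For a fixed exponent $q > 1$ and a cut-off $\eta \in C_c^\infty(\R^n)$ with $\supp(\eta) \subseteq B_R \subseteq B_1$, $\eta \equiv 1$ on $B_r$ and $|\nabla \eta| \leq 2/(R-r)$, we test \eqref{ELsupsol} with $\varphi := \eta^2 u^{-q}$, which is non-negative, bounded and compactly supported thanks to \eqref{ugef+delta}. Using the pointwise convexity-type inequality
$$
-(a - b)(a^{-q} - b^{-q}) \geq c_q \left(a^{(1-q)/2} - b^{(1-q)/2}\right)^2 \qquad \text{for all } a, b > 0,
$$
together with the algebraic identity
$$
(a - b)(\alpha^2 a^{-q} - \beta^2 b^{-q}) = \frac{\alpha^2 + \beta^2}{2}(a - b)(a^{-q} - b^{-q}) + \frac{\alpha^2 - \beta^2}{2}(a - b)(a^{-q} + b^{-q}),
$$
and estimating the second summand via Cauchy--Schwarz (in the spirit of the decomposition performed in the proof of Lemma \ref{JNlem}), we extract a principal term dominating $c_q [\eta w]_{H^s(B_R)}^2$, with $w := u^{(1-q)/2}$. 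The tail contribution over $\R^n \setminus B_R$ is controlled exactly as in \eqref{logI2}, using $u^{-q} \leq \delta^{-q}$ and \eqref{Kbounds}, while the right-hand side $\int f \eta^2 u^{-q}$ is bounded via $|f| \leq u$, which is a direct consequence of \eqref{ugef+delta}. The resulting Caccioppoli bound reads
$$
[\eta w]_{H^s(\R^n)}^2 \leq C(q, R - r) \int_{B_R} w^2 \, dx.
$$
Combined with the fractional Sobolev embedding $H^s \hookrightarrow L^{2n/(n-2s)}$, this yields, with $\chi := n/(n-2s) > 1$,
$$
\|v\|_{L^{\chi(q - 1)}(B_r)} \leq C(q, R - r)^{1/(q-1)} \|v\|_{L^{q - 1}(B_R)}.
$$
Iterating along the geometric sequence $p_k := p_0 \chi^k$ and the shrinking radii $r_k := 1/4 + 2^{-k-2}$ starting from $p_0$, and checking that the telescoping product of constants remains finite (the constant $C$ grows polynomially in $q$, while it is raised to the exponent $1/p_k$ in each step), one concludes
$$
\sup_{B_{1/4}} v \leq C(p_0) \|v\|_{L^{p_0}(B_{1/2})},
$$
which, after taking reciprocals and absorbing volume normalizations, is precisely \eqref{uinfp0est}.

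The main obstacle is the careful bookkeeping of the Caccioppoli estimate in the non-local setting: both the far-field tail and the algebraic error terms involving $(\eta(x) - \eta(y))^2$ must be absorbed into the good $H^s$-seminorm term with constants depending controllably (polynomially) on $q$, so that the iteration converges for every starting $p_0 > 0$. The pointwise convexity inequality relating $(a-b)(a^{-q} - b^{-q})$ to $(a^{(1-q)/2} - b^{(1-q)/2})^2$, though elementary, must be applied with the correct $q$-dependent constant. Beyond these technical points, the scheme is the standard Moser iteration for fractional equations.
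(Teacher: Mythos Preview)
Your approach---Moser iteration on negative powers of~$u$, testing the supersolution inequality with~$\eta^2 u^{-q}$, extracting a Caccioppoli bound for~$w = u^{(1-q)/2}$ via the convexity inequality $-(a-b)(a^{-q}-b^{-q}) \ge c_q\,(a^{(1-q)/2}-b^{(1-q)/2})^2$, combining with the fractional Sobolev embedding, and iterating along the geometric exponents~$p_k = p_0\chi^k$ with shrinking radii---is exactly the scheme the paper follows. The paper uses the slightly different test function~$\eta^{p+1} u^{-p}$ and defers the Caccioppoli algebra to Lemma~3.5 of~\cite{K09}, but the structure and the convergence check for the iterated constants are identical.

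One step, however, would not go through as you wrote it. You control the tail over~$\R^n \setminus B_R$ ``using~$u^{-q} \le \delta^{-q}$''. This inserts a~$\delta$-dependent factor into the Caccioppoli constant, and since~$q \to \infty$ along the iteration the final~$c_\sharp$ would then depend on~$\delta$---which the lemma explicitly rules out. The correct tail bound (and the one underlying the paper's reference to~\cite{K09}) uses instead the global non-negativity of~$u$ to write~$u(x) - u(y) \le u(x)$ for~$y \notin B_R$, whence
\[
\eta^2(x)\, u^{-q}(x)\,\bigl(u(x) - u(y)\bigr) \;\le\; \eta^2(x)\, u^{1-q}(x) \;=\; \eta^2(x)\, w^2(x),
\]
so that the tail is dominated by~$C\,(R-r)^{-2s} \int_{B_R} w^2$, a~$\delta$-free term that feeds directly back into the iteration. (Strictly speaking, the lemma as stated only imposes~$u \ge \delta$ on~$B_1$; the global non-negativity needed here is indeed satisfied in the lemma's sole application in Corollary~\ref{weakharnackcor}.)
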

\begin{proof}
Fix~$\theta \in (0, 1)$. We claim that, for any~$r \in (0, 1 / 2]$ and~$p > 1$, it holds
\begin{equation} \label{prebrick}
\int_{B_{\theta r}} \int_{B_{\theta r}} \frac{\left| u(x)^{(- p + 1) / 2} - u(y)^{(- p + 1) / 2} \right|^2}{|x - y|^{n + 2 s}} \, dx dy \le c_1 \frac{p^2}{(1 - \theta)^{2} r^{2 s}} \int_{B_r} u(x)^{- p + 1} \, dx,
\end{equation}
for some constant~$c_1 > 0$ depending on~$n$,~$s_0$,~$\lambda$ and~$\Lambda$.

To prove~\eqref{prebrick}, consider a cut-off~$\eta \in C^\infty_c(\R^n)$ such that~$0 \le \eta \le 1$ in~$\R^n$,~$\supp(\eta) = B_r$,~$\eta = 1$ in~$B_{\theta r}$ and~$|\nabla \eta| \le 2 (1 - \theta)^{-1} r^{-1}$ in~$\R^n$, and plug~$\varphi := \eta^{p + 1} u^{- p}$ into~\eqref{ELsupsol}. Inequality~\eqref{prebrick} then follows by arguing as in Lemma~3.5 of~\cite{K09} and noticing that, by~\eqref{ugef+delta},
$$
\int_{B_r} \frac{f(x) \eta(x)^{p + 1}}{u(x)^p} \, dx \ge - \int_{B_r} \frac{|f(x)| u(x)^{- p + 1}}{u(x)} \, dx \ge - r^{- 2 s} \int_{B_r} u(x)^{- p + 1} \, dx,
$$
where we also used the fact that~$r < 1$.

By using~\eqref{prebrick} in combination with the fractional Sobolev inequality, we then deduce
\begin{equation} \label{brick}
\left( \dashint_{B_{\theta r}} u(x)^{\frac{n (- p + 1)}{n - 2 s}} \, dx \right)^{(n - 2 s) / n} \le c_2 \frac{p^2}{(1 - \theta)^{2} \theta^n} \, \dashint_{B_r} u(x)^{- p + 1} \, dx,
\end{equation}
for some~$c_2 \ge 1$ which depends only on~$n$,~$s_0$,~$\lambda$ and~$\Lambda$.

We are now in position to run the iterative scheme, which is based on the fundamental estimate~\eqref{brick}. For any~$k \in \N \cup \{ 0 \}$, define
\begin{align*}
r_k := \frac{1 + 2^{-k}}{4}, \quad p_k := \left( \frac{n}{n - 2 s} \right)^k p_0 \quad \mbox{and} \quad \Phi_k := \left( \dashint_{B_{r_k}} u(x)^{- p_k} \, dx \right)^{1 / p_k},
\end{align*}
so that
$$
\theta_k := \frac{r_{k + 1}}{r_k} = \frac{1 + 2^{- k - 1}}{1 + 2^{- k}} \in \left[ \frac{3}{4}, 1 \right).
$$
We apply~\eqref{brick} with~$r = r_k$,~$\theta = \theta_k$ and~$p = 1 + p_k$, to get
\begin{equation} \label{iter}
\Phi_{k + 1} \le q_k \Phi_k,
\end{equation}
for any~$k \in \N \cup \{ 0 \}$, where
$$
q_k := \left[ c_2 \frac{(1 + p_k)^2}{(1 - \theta_k)^{2} \theta_k^n} \right]^{1 / p_k}.
$$
From~\eqref{iter} it then follows that
\begin{equation}\label{Phi k pro}
\Phi_k \le \Phi_0 \prod_{j = 0}^{k - 1} q_j.
\end{equation}
Now we observe that
$$
1 - \theta_k = \frac{2^{-k}-2^{-k-1}}{1+2^{-k}} = \frac{1}{2^{k+1}+2}
\ge \frac{1}{2^{k+2}}.
$$
Therefore, recalling that~$\theta_k \ge 3 / 4$,
$$
\frac{1}{(1 - \theta_k)^2 \theta_k^n} \le 2^{2 ( k + 2 )} \left( \frac{4}{3} \right)^n \le 2^{2 k + n + 4},
$$
and hence
$$
\log q_k \le \frac{1}{p_k} \log \left[ c_2 (1 + p_k)^2 2^{2 k + n + 4} \right] \le \frac{1}{p_k} \log \left[ c_3 \left( \frac{2 n}{n - 2 s} \right)^{2 k} \right] \le c_4 \left( \frac{n - 2 s_0}{n} \right)^k k,
$$
for some~$c_3, c_4 > 0$ that may also depend on~$p_0$. This implies that the product of the~$q_j$'s converges, as~$k \rightarrow +\infty$. Thence,~\eqref{uinfp0est} follows
from~\eqref{Phi k pro}, since
\begin{equation*}
\liminf_{k \rightarrow +\infty} \Phi_k \ge \lim_{k \rightarrow +\infty} |B_{r_k}|^{- 1 / p_k} \| u^{-1} \|_{L^{p_k}(B_{1 / 4})} = \sup_{B_{1 / 4}} u^{-1} = \left( \inf_{B_{1 / 4}} u \right)^{-1}. \qedhere
\end{equation*}
\end{proof}

By putting together Lemmata~\ref{JNlem} and~\ref{moserlem}, we easily obtain the following \emph{weak Harnack inequality}.

\begin{corollary} \label{weakharnackcor}
Let~$r \in (0, 1]$ and~$f \in L^\infty(B_r)$. Assume that~$u \in X(B_r) \cap L^\infty(\R^n)$ is a non-negative supersolution of~\eqref{ELeq} in~$B_r$. Then,
\begin{equation} \label{weakharnack}
\inf_{B_{r / 4}} u + r^{2 s} \| f \|_{L^\infty(B_r)} \ge c_\star \left( \dashint_{B_{r / 2}} u(x)^{p_\star} \right)^{1 / p_\star},
\end{equation}
for some~$c_\star \in (0, 1)$ depending only on~$n$,~$s_0$,~$\lambda$ and~$\Lambda$.
\end{corollary}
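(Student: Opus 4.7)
The plan is to combine Lemmata~\ref{JNlem} and~\ref{moserlem} after three preliminary reductions: rescaling the ball~$B_r$ to~$B_1$, shifting~$u$ by a positive constant to activate the strict positivity hypothesis~\eqref{ugef+delta}, and passing to the limit in the shift at the end.

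First, I would rescale: set~$v(x) := u(rx)$ for~$x \in \R^n$. A change of variables turns the supersolution property on~$B_r$ into the analogous inequality on~$B_1$ for the rescaled kernel~$\hat K(x,y) := r^{n + 2s} K(rx, ry)$ and the rescaled datum~$\tilde f(x) := r^{2s} f(rx)$, with~$\|\tilde f\|_{L^\infty(B_1)} = r^{2s} \|f\|_{L^\infty(B_r)}$. The key point to be checked here is that, since~$r \le 1$, the kernel~$\hat K$ still verifies~\eqref{Ksimm}-\eqref{Kbounds} with the same constants~$\lambda$ and~$\Lambda$: the truncation range~$\{|x-y| \le 1\}$ in the lower bound is only enlarged by the rescaling by~$1/r \ge 1$. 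Next, for~$\delta > 0$, I would introduce the shifted function
\[
w_\delta := v + \|\tilde f\|_{L^\infty(B_1)} + \delta.
\]
Since additive constants give no contribution to~$\D_{\hat K}$, the function~$w_\delta$ is still a non-negative supersolution of the same equation on~$B_1$, and by construction it satisfies~\eqref{ugef+delta}, so both Lemmata~\ref{JNlem} and~\ref{moserlem} apply to~$w_\delta$ on~$B_1$.

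I would then apply Lemma~\ref{moserlem} with~$p_0 := p_\star$ and chain it with Lemma~\ref{JNlem} to obtain
\[
\inf_{B_{1/4}} w_\delta \ge c_\sharp \left( \dashint_{B_{1/2}} w_\delta^{-p_\star}\, dx \right)^{-1/p_\star} \ge \frac{c_\sharp}{C_\star} \left( \dashint_{B_{1/2}} w_\delta^{p_\star}\, dx \right)^{1/p_\star}.
\]
Using~$w_\delta \ge v \ge 0$ on the right-hand side, letting~$\delta \to 0^+$, and finally undoing the rescaling (note that~$\inf_{B_{1/4}} v = \inf_{B_{r/4}} u$ and~$\dashint_{B_{1/2}} v^{p_\star}\, dx = \dashint_{B_{r/2}} u^{p_\star}\, dx$) delivers~\eqref{weakharnack} with~$c_\star := \min\{c_\sharp/C_\star, 1/2\} \in (0,1)$, which depends only on the allowed quantities.

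I do not anticipate a serious obstacle here: the bulk of the work has already been carried out in Lemmata~\ref{JNlem} and~\ref{moserlem}, and the present step amounts to a careful bookkeeping of the scaling~$u \mapsto u(r\,\cdot)$ and of the additive shift needed to enforce the positivity condition~\eqref{ugef+delta} required by those two lemmata.
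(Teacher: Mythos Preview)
Your proposal is correct and follows essentially the same approach as the paper: shift~$u$ by~$\|f\|_{L^\infty}+\delta$ to enforce~\eqref{ugef+delta}, chain Lemma~\ref{moserlem} (with~$p_0=p_\star$) and Lemma~\ref{JNlem}, let~$\delta\to0^+$, and handle general~$r\le1$ by scaling. The only cosmetic difference is that you perform the scaling first and the paper performs it last; your explicit verification that the rescaled kernel still satisfies~\eqref{Kbounds} when~$r\le1$ is exactly the content of the paper's ``simple scaling argument''.
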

\begin{proof}
Assume for the moment~$r = 1$. Let then~$\delta > 0$ be a small parameter and define~$u_\delta := u + \| f \|_{L^\infty(B_1)} + \delta$. Note that~$u_\delta$ is still a non-negative supersolution of~\eqref{ELeq} in~$B_1$ and that it satisfies~\eqref{ugef+delta}. Thus, we are free to apply Lemmata~\ref{JNlem} and~\ref{moserlem} to~$u_\delta$ and 
obtain that
$$
\inf_{B_{1 / 4}} u + \| f \|_{L^\infty(B_1)} + \delta 
\ge \frac{c_\sharp}{C_\star} \left( \dashint_{B_{1 / 2}} u(x)^{p_\star} \, dx \right)^{1 / p_\star}.
$$
Letting~$\delta \rightarrow 0^+$ we obtain~\eqref{weakharnack} when~$r = 1$. For a general radius~$r \le 1$ the result follows by a simple scaling argument. 
\end{proof}

With the aid of Corollary~\ref{weakharnackcor}, we can prove the following proposition, which will be the fundamental step in the conclusive inductive argument. In the literature, results of this kind are often known as~\emph{growth lemmata}.

\begin{proposition} \label{keyprop}
There exist~$\gamma \in (0, 2 s_0)$ and~$\eta \in (0, 1)$, depending only on~$n$,~$s_0$,~$\lambda$ and~$\Lambda$, such that for any~$r \in (0, 1]$,~$f \in L^\infty(B_r)$ and~$u \in X(B_r) \cap L^\infty(\R^n)$ supersolution of~\eqref{ELsupsol} in~$B_r$, for which
\begin{equation} \label{uge0key}
u(x) \ge 0 \quad \mbox{for a.e. } x \in B_{2 r},
\end{equation}
\begin{equation} \label{ugehalfkey}
\left| \left\{ x \in B_{r / 2} : u(x) \ge 1 \right\} \right| \ge \frac{1}{2} |B_{r / 2}|,
\end{equation}
and
\begin{equation} \label{uoutside}
u(x) \ge - 2 \left( 8 \frac{|x|}{2 r} \right)^\gamma + 2 \quad \mbox{for a.e. } x \in \R^n \setminus B_{2 r},
\end{equation}
hold true, then
\begin{equation} \label{keyine}
\inf_{B_{r / 4}} u + r^{2 s} \| f \|_{L^\infty(B_r)} \ge \eta.
\end{equation}
\end{proposition}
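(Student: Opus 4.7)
The plan is to reduce to the case $r = 1$ by the natural scaling $\tilde u(\xi) := u(r\xi)$, $\tilde f(\xi) := r^{2s} f(r\xi)$ (both the hypotheses \eqref{ugehalfkey}, \eqref{uoutside} and the target \eqref{keyine} are preserved, with $\|\tilde f\|_\infty = r^{2s}\|f\|_\infty$), and then to apply the weak Harnack inequality of Corollary~\ref{weakharnackcor} to a truncation of $u$ that is non-negative on all of $\R^n$. Concretely, I would set $v := u \chi_{B_2}$, which is non-negative thanks to \eqref{uge0key}; the price to pay is that $v$ no longer solves the same equation as $u$, since cutting off the tail produces an additional forcing term on $B_1$.

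For a non-negative $\varphi \in X_0(B_1)$, the symmetry \eqref{Ksimm} together with the fact that $u - v$ is supported outside $B_2$ (while $\varphi$ lives inside $B_1$) yields, after a direct case analysis,
\begin{equation*}
\D_K(v, \varphi) \;=\; \D_K(u, \varphi) \,+\, 2 \int_{B_1} \varphi(x) g(x) \, dx, \qquad g(x) := \int_{\R^n \setminus B_2} u(y) K(x, y) \, dy.
\end{equation*}
Thus $v$ is a non-negative supersolution of $\D_K(v, \cdot) = f + 2g$ in $B_1$. To control $g$ from below, I would combine the tail bound \eqref{uoutside} (which gives $u^-(y) \le 2(4|y|)^\gamma - 2$ for $|y| > 2$) with the inequality $K(x, y) \le 2^{n + 2s} \Lambda |y|^{-n - 2s}$, valid for $x \in B_1$ and $|y| > 2$. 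Integrating in polar coordinates (the convergence is secured by $\gamma < 2s_0 \le 2s$) produces
\begin{equation*}
g(x) \;\ge\; - 2^{n+1} n \alpha_n \Lambda \left( \frac{8^\gamma}{2s - \gamma} - \frac{1}{2s} \right) \;=:\; - C_1(\gamma, s).
\end{equation*}
The crucial observation is that $C_1(0, s) = 0$, precisely because of the $+2$ summand in \eqref{uoutside}; hence $C_1(\gamma, s) \le c \,\gamma$ with $c = c(n, s_0, \Lambda)$ for small $\gamma$, uniformly in $s \in [s_0, 1 - s_0]$.

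Replacing the right-hand side by its lower bound $f - 2C_1 \in L^\infty(B_1)$, I would apply Corollary~\ref{weakharnackcor} to the non-negative supersolution $v$; using that $v = u$ on $B_2$ together with \eqref{ugehalfkey} to estimate the integral average from below, this yields
\begin{equation*}
\inf_{B_{1/4}} u \,+\, \|f\|_{L^\infty(B_1)} \,+\, 2 C_1 \;\ge\; c_\star \left( \dashint_{B_{1/2}} v^{p_\star} \, dx \right)^{1/p_\star} \;\ge\; c_\star \, 2^{-1/p_\star}.
\end{equation*}
Choosing $\gamma \in (0, 2s_0)$ universally small so that $2 C_1 \le c_\star 2^{-1/p_\star}/2$, and setting $\eta := c_\star 2^{- 1/p_\star}/2$, would then give \eqref{keyine} after undoing the scaling. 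The main difficulty is precisely this last balancing act: the exterior bound \eqref{uoutside} is evidently engineered so that the polynomial growth it permits becomes negligibly small as $\gamma \to 0^+$, which is what allows the tail contribution $C_1$ to be dominated by the universal weak Harnack constant $c_\star 2^{-1/p_\star}$.
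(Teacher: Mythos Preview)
Your proposal is correct and follows essentially the same strategy as the paper's proof: apply the weak Harnack inequality of Corollary~\ref{weakharnackcor} to a non-negative modification of~$u$, estimate the additional tail forcing via \eqref{uoutside}, and then choose~$\gamma$ small so that this error is absorbed by the universal constant~$c_\star 2^{-1/p_\star}$. The only differences are cosmetic: the paper works directly at scale~$r$ rather than rescaling to~$r=1$, and it uses~$u_+ = \max\{u,0\}$ as the non-negative truncation instead of your~$v = u\,\chi_{B_2}$ (both coincide with~$u$ on~$B_2$ by \eqref{uge0key}, and both yield the same tail computation, since only~$u_-$ contributes to the deviation of~$\tilde f$ from~$f$).
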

\begin{proof}
Write~$u = u_+ - u_-$. Using~\eqref{Ksimm} and~\eqref{uge0key}, it is easy to see that~$u_+$ is a supersolution of
$$
\D_K(u_+, \cdot) = \tilde{f} \quad \mbox{in } B_{r},
$$
where
$$
\tilde{f}(x) := f(x) - 2 \int_{\R^n \setminus B_{2 r}} u_-(y) K(x, y) \, dy.
$$
Applying Corollary~\ref{weakharnackcor} we get that
$$
\inf_{B_{r / 4}} u_+ + r^{2 s} \| \tilde{f} \|_{L^\infty(B_r)} \ge c_\star \left( \dashint_{B_{r / 2}} u_+(x)^{p_\star} \right)^{1 / p_\star}.
$$
Using then hypotheses~\eqref{uge0key} and~\eqref{ugehalfkey}, this yields
\begin{equation} \label{keytech}
\begin{aligned}
\inf_{B_{r / 4}} u + r^{2 s} \| \tilde{f} \|_{L^\infty(B_r)} & \ge c_\star \left( \dashint_{B_{r / 2} \cap \{ u \ge 1 \}} u(x)^{p_\star} \right)^{1 / p_\star} \\
& \ge c_\star \left( \frac{\left| \left\{ x \in B_{r / 2} : u(x) \ge 1  \right\} \right|}{|B_{r / 2}|} \right)^{1 / p_\star} \\
& \ge c_\star 2^{- 1 / p_\star} =: 2 \eta.
\end{aligned}
\end{equation}

Now we turn our attention to the~$L^\infty$ norm of~$\tilde{f}$. First, we notice that~\eqref{uoutside} implies that
$$
u_-(x) \le 2 \left( 8 \frac{|x|}{2 r} \right)^\gamma - 2 \quad \mbox{for a.e. } x \in \R^n \setminus B_{2 r},
$$
as the right-hand side of~\eqref{uoutside} is negative. Moreover, given~$x \in B_r$ and~$y \in \R^n \setminus B_{2 r}$, it holds
$$
|y - x| \ge |y| - |x| \ge |y| - \frac{|y|}{2} = \frac{|y|}{2}.
$$
Consequently, recalling~\eqref{Kbounds} we compute
\begin{align*}
\int_{\R^n \setminus B_{2 r}} u_-(y) K(x, y) \, dy & \le \Lambda \int_{\R^n \setminus B_{2 r}} \frac{2 \left( 8 \frac{|y|}{2 r} \right)^\gamma - 2}{|x - y|^{n + 2 s}} \, dy \\
& \le 2^{n + 2 s + 1} \Lambda \left[ \left( \frac{4}{r} \right)^\gamma \int_{\R^n \setminus B_{2 r}} |y|^{\gamma - n - 2 s} \, dy - \int_{\R^n \setminus B_{2 r}} |y|^{- n - 2 s} \, dy \right] \\
& = 2^{n + 1} n \alpha_n \Lambda \left[ \frac{8^\gamma}{2 s - \gamma} - \frac{1}{2 s} \right] r^{- 2 s},
\end{align*}
if~$\gamma < 2 s_0$. Notice that the term in brackets on the last line of the above formula converges to~$0$ as~$\gamma \rightarrow 0^+$, uniformly in~$s \ge s_0$. Therefore, we can find~$\gamma > 0$, in dependence of~$n$,~$s_0$,~$\lambda$ and~$\Lambda$, such that
$$
\| \tilde{f} \|_{L^\infty(B_r)} \le \| f \|_{L^\infty(B_r)} + r^{- 2 s} \eta.
$$
Inequality~\eqref{keyine} then follows by combining this with~\eqref{keytech}.
\end{proof}

We are now ready to move to the actual

\begin{proof}[Proof of Theorem~\ref{holderthm}]
We focus on the proof of~\eqref{holderine}, as the H\"{o}lder continuity of~$u$ inside~$\Omega$ would then easily follow. Furthermore, we may assume without loss of generality~$x_0$ to be the origin.

Set
\begin{equation} \label{Rbound}
R_0 := \left( \frac{\eta}{4} \right)^{\frac{1}{2 s_0}} < 1,
\end{equation}
with~$\eta$ as in Proposition~\ref{keyprop}, and take~$R \in (0, R_0]$. We claim that there exist a constant~$\alpha \in (0, 1)$, depending only on~$n$,~$s$,~$\lambda$ and~$\Lambda$, a non-decreasing sequence~$\{ m_j \}$ and a non-increasing sequence~$\{ M_j \}$ of real numbers such that for any~$j \in \N \cup \{ 0 \}$
\begin{equation} \label{ind}
\begin{aligned}
& m_j \le u(x) \le M_j \quad \mbox{for a.e. } x \in B_{8^{- j} R}, \\
& M_j - m_j = 8^{- j \alpha} L,
\end{aligned}
\end{equation}
with
\begin{equation} \label{Ldef}
L := 2 \| u \|_{L^\infty(\R^n)} + \| f \|_{L^\infty(B_R)}.
\end{equation}

We prove this by induction. Set~$m_0 := - \| u \|_{L^\infty(\R^n)}$ and~$M_0 := \| u \|_{L^\infty(\R^n)} + \| f \|_{L^\infty(B_R)}$. With this choice, property~\eqref{ind} clearly holds true for~$j = 0$. Then, for a fixed~$k \in \N$, we assume to have constructed the two sequences~$\{ m_j \}$ and~$\{ M_j \}$ up to~$j = k - 1$ in such a way that~\eqref{ind} is satisfied and show that we can also build~$m_k$ and~$M_k$. For any~$x \in \R^n$, define
$$
v(x) := \frac{2 \cdot 8^{(k - 1) \alpha}}{L} \left( u(x) - \frac{M_{k - 1} + m_{k - 1}}{2} \right),
$$
with
\begin{equation} \label{alphadef}
\alpha := \min \left\{ \gamma, \frac{\log \left( \frac{4}{4 - \eta} \right)}{\log 8} \right\},
\end{equation}
and~$\gamma, \eta$ as in Proposition~\ref{keyprop}. Since~$u$ is a solution of~\eqref{ELeq} in~$\Omega$, we deduce that~$v$ satisfies
\begin{equation} \label{veq}
\D_K(v, \cdot) = \frac{2 \cdot 8^{(k - 1) \alpha}}{L} f \quad \mbox{in } B_{8^{- (k - 1)} R}.
\end{equation}
Moreover,
\begin{equation} \label{|v|le1}
|v(x)| \le 1 \quad \mbox{for a.e. } x \in B_{8^{- (k - 1)} R}.
\end{equation}
Letting instead~$x \in \R^n \setminus B_{8^{- (k - 1)} R}$, there exists a unique~$\ell \in \N$ for which
$$
8^{- (k - \ell)} R \le |x| < 8^{- (k - \ell - 1)} R.
$$
Writing~$m_{- j} := m_0$ and~$M_{- j} := M_0$ for every~$j \in \N$, we compute
\begin{equation} \label{vle}
\begin{aligned}
v(x) & \le \frac{2 \cdot 8^{(k - 1) \alpha}}{L} \left( M_{k - \ell - 1} - m_{k - \ell - 1} + m_{k - \ell - 1} - \frac{M_{k - 1} + m_{k - 1}}{2} \right) \\
& \le \frac{2 \cdot 8^{(k - 1) \alpha}}{L} \left( M_{k - \ell - 1} - m_{k - \ell - 1} - \frac{M_{k - 1} - m_{k - 1}}{2} \right) \\
& \le \frac{2 \cdot 8^{(k - 1) \alpha}}{L} \left( 8^{- (k - \ell - 1) \alpha} L - \frac{8^{- (k - 1) \alpha} L}{2} \right) \\
& = 2 \cdot 8^{\ell \alpha} - 1 \\
& \le 2 \left( 8 \frac{|x|}{8^{- (k - 1)} R} \right)^\alpha - 1,
\end{aligned}
\end{equation}
Analogously, one checks that
\begin{equation} \label{vge}
v(x) \ge - 2 \left( 8 \frac{|x|}{8^{-(k - 1)} R} \right)^\alpha + 1,
\end{equation}
for a.e.~$x \in \R^n \setminus B_{8^{- (k - 1)} R}$.

We distinguish between the two mutually exclusive possibilities
\begin{enumerate}[(a)]
\item $\left| \left\{ x \in B_{8^{- (k - 1)} R / 4} : v(x) \le 0 \right\} \right| \ge \frac{1}{2} |B_{8^{- (k - 1)} R / 4}|$, and
\item $\left| \left\{ x \in B_{8^{- (k - 1)} R / 4} : v(x) \le 0 \right\} \right| < \frac{1}{2} |B_{8^{- (k - 1)} R / 4}|$.
\end{enumerate}
In case~(a), set~$\tilde{u} := 1 - v$. From~\eqref{veq} we deduce in particular that
$$
\D_K(\tilde{u}, \cdot) = - \frac{2 \cdot 8^{(k - 1) \alpha}}{L} f \quad \mbox{in } B_{8^{-(k - 1)} R / 2}.
$$
In view of~\eqref{|v|le1} and~\eqref{vle} we apply Proposition~\ref{keyprop} to~$\tilde{u}$, with~$r = 8^{- (k - 1)} R / 2$, and obtain that
$$
\inf_{B_{8^{- (k - 1)} R / 8}} \tilde{u} + \left( \frac{8^{- (k - 1)} R}{2} \right)^{2 s} \left\| - \frac{2 \cdot 8^{(k - 1) \alpha}}{L} f \right\|_{L^\infty(B_{8^{- (k - 1)} R / 2})} \ge \eta,
$$
from which, by~\eqref{Ldef} and~\eqref{Rbound}, it follows
\begin{align*}
\sup_{B_{8^{- k} R}} v & \le 1 - \eta + \left( \frac{8^{- (k - 1)} R}{2} \right)^{2 s} \left\| - \frac{2 \cdot 8^{(k - 1) \alpha}}{L} f \right\|_{L^\infty(B_{8^{- (k - 1)} R / 2})} \\
& \le 1 - \eta + 2 \cdot 8^{- (2 s_0 - \alpha) (k - 1)} R_0^{2 s_0} \frac{\| f \|_{L^\infty(B_R)}}{L} \\
& \le 1 - \frac{\eta}{2}.
\end{align*}
Note that we took advantage of the fact that~$\alpha \le \gamma < 2 s_0$, by~\eqref{alphadef}. If we translate this estimate back to~$u$, applying~\eqref{alphadef} once again we finally get
\begin{align*}
\sup_{B_{8^{- k} R}} u & \le \left( 1 - \frac{\eta}{2} \right) \frac{L}{2 \cdot 8^{(k - 1) \alpha}} + \frac{M_{k - 1} + m_{k - 1}}{2} \\
& = \left( 1 - \frac{\eta}{2} \right) \frac{M_{k - 1} - m_{k - 1}}{2} + \frac{M_{k - 1} + m_{k - 1}}{2} \\
& = m_{k - 1} + \left( \frac{4 - \eta}{4} \right) \left( M_{k - 1} - m_{k - 1} \right) \\
& \le m_{k - 1} + 8^{- k \alpha} L.
\end{align*}
Accordingly,~\eqref{ind} is satisfied by setting~$m_k := m_{k - 1}$ and~$M_k := m_{k - 1} + 8^{- k \alpha} L$.

If on the other hand~(b) holds we define~$\tilde{u} := 1 + v$. With a completely analogous argument using~\eqref{vge} in place of~\eqref{vle}, we end up estimating
$$
\inf_{B_{8^{-k} R}} u \ge M_{k - 1} - 8^{- k \alpha} L,
$$
so that~\eqref{ind} again follows with~$m_k := M_{k - 1} - 8^{- k \alpha} L$ and~$M_k := M_{k - 1}$.

The proof of the theorem is therefore complete, as the bound in~\eqref{holderine} is an immediate consequence of claim~\eqref{ind}.
\end{proof}

\section{An energy estimate} \label{enestsec}

We include here a result which addresses the growth of the energy~$\E$ of local minimizers inside large balls. We point out that, as in Section~\ref{regsec}, this estimate is set in a general framework. In particular, the periodicity of~$K$ and~$W$ encoded in~\eqref{Kper} and~\eqref{Wper} is not significant here. Writing
\begin{equation} \label{Psidef}
\Psi_s(R) := \begin{cases}
R^{1 - 2 s} & \quad \mbox{if } s \in (0, 1/2) \\
     \log R & \quad \mbox{if } s = 1/2 \\
          1 & \quad \mbox{if } s \in (1/2, 1),
\end{cases}
\end{equation}
we can state the following

\begin{proposition} \label{enestprop}
Let~$n \in \N$,~$s \in (0, 1)$,~$x_0 \in \R^n$ and~$R \ge 3$. Assume that~$K$ and~$W$ satisfy\footnote{We observe that,
at this level, only the boundedness of~$W$ encoded
in~\eqref{Wbound} is relevant here. Thus, no assumption on the derivative~$W_r$ is necessary. See in particular the proof of
Proposition~\ref{enestprop}.}
\eqref{Ksimm},~\eqref{Kbounds} and~\eqref{Wzeros},~\eqref{Wbound}, respectively. If~$u: \R^n \to [-1, 1]$ is a local minimizer of~$\E$ in~$B_{R + 2}(x_0)$, then
\begin{equation} \label{enest}
\E(u; B_R(x_0)) \le C R^{n - 1} \Psi_s(R),
\end{equation}
for some constant~$C > 0$ which depends on~$n$,~$s$,~$\Lambda$ and~$W^*$.
\end{proposition}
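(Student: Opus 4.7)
The plan is a comparison argument. Since $u$ is a local minimizer on $B_{R+2}(x_0)$, one has $\E(u; B_{R+2}(x_0)) \le \E(v; B_{R+2}(x_0))$ for any competitor $v$ coinciding with $u$ on $\R^n\setminus B_{R+2}(x_0)$, and by the inclusion $\C_{B_R(x_0)} \subseteq \C_{B_{R+2}(x_0)}$ one also has $\E(u; B_R(x_0)) \le \E(u; B_{R+2}(x_0))$. It therefore suffices to exhibit a competitor $v$ whose localized energy on $B_{R+2}(x_0)$ is of order $R^{n-1}\Psi_s(R)$.

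Taking $x_0 = 0$ for brevity, I would pick a Lipschitz cutoff $\eta\colon \R^n \to [0,1]$ with $\eta \equiv 1$ on $B_{R+1}$, $\eta \equiv 0$ outside $B_{R+2}$, and $|\nabla \eta|\le 2$, and set $v := (1-\eta)u - \eta$. Then $v \equiv -1$ on $B_{R+1}$, $v = u$ outside $B_{R+2}$, and $|v|\le 1$. The potential part is immediate from~\eqref{Wzeros} and~\eqref{Wbound}:
\[
\P(v; B_{R+2}) \le W^*\,|B_{R+2}\setminus B_{R+1}| \le C R^{n-1}.
\]
For the kinetic part, the identity $v(x)-v(y) = (1-\eta(x))(u(x)-u(y)) - (u(y)+1)(\eta(x)-\eta(y))$ together with its swap-symmetric version yields
\[
|v(x)-v(y)|^2 \le 2\min\!\bigl((1-\eta(x))^2,(1-\eta(y))^2\bigr)|u(x)-u(y)|^2 + 8|\eta(x)-\eta(y)|^2,
\]
whose first summand vanishes as soon as either argument lies in $B_{R+1}$.

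The dominant contribution is thus the far-field cross-term from pairs $x\in B_{R+1}$, $y\in \R^n\setminus B_{R+2}$, where $|v(x)-v(y)|^2\le 4$. I would bound the core integral
\[
\int_{B_{R+1}}\!\!\int_{\R^n\setminus B_{R+2}} |x-y|^{-n-2s}\,dx\,dy \;\le\; \tfrac{n\alpha_n}{2s}\!\int_{B_{R+1}}(R+2-|x|)^{-2s}\,dx
\]
by means of the inclusion $\R^n\setminus B_{R+2}\subseteq\{|y-x|>R+2-|x|\}$, and then compute the right-hand side via the substitution $t=R+2-|x|$, splitting $\int_1^{R+2}(R+2-t)^{n-1}t^{-2s}\,dt$ at $t\sim R/2$; this reproduces the three regimes $R^{n-2s}$, $R^{n-1}\log R$, $R^{n-1}$ as $s<1/2$, $s=1/2$, $s>1/2$, i.e.\ exactly $R^{n-1}\Psi_s(R)$. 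The cutoff contribution $\int\!\!\int|\eta(x)-\eta(y)|^2 K(x,y)\,dx\,dy$ is controlled by $\Lambda[\eta]^2_{H^s(\R^n)}=O(R^{n-1})$ via a standard near/far splitting of the seminorm, since $\nabla\eta$ is supported in the shell $B_{R+2}\setminus B_{R+1}$ of measure $O(R^{n-1})$.

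The principal obstacle I anticipate is the residual piece carrying $|u(x)-u(y)|^2$ for pairs with at least one endpoint in the thin annulus $A := B_{R+2}\setminus B_{R+1}$: these terms a priori involve the $H^s$-seminorm of $u$ localized near $\partial B_{R+2}$, which is itself part of the energy we are trying to bound. The plan is to absorb them across the inequality $\E(u; B_{R+2})\le \E(v; B_{R+2})$ by exploiting the pointwise smallness $(1-\eta(x))^2 \le C\,\dist(x,B_{R+1})^2$ available in $A$; since integrating the weight $\dist(\cdot,B_{R+1})^2$ against the singular kernel $|x-y|^{-n-2s}$ over a shell of thickness one yields $\int_0^1 \rho^{2-2s}\,d\rho\cdot O(R^{n-1}) = O(R^{n-1})$ for every $s\in(0,1)$, a careful bookkeeping of the kinetic integrals common to both sides of the minimality inequality should close the estimate at the order $R^{n-1}\Psi_s(R)$.
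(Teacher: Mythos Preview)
Your identification of the ``principal obstacle'' is accurate, but the absorption scheme you sketch does not close. After inserting your competitor into $\E(u;B_{R+2})\le\E(v;B_{R+2})$, the contribution from pairs $(x,y)$ with $x\in A:=B_{R+2}\setminus B_{R+1}$ and $y\in\R^n\setminus B_{R+1}$ carries, on the $v$-side, a factor $(1+\epsilon)\min\bigl((1-\eta(x))^2,(1-\eta(y))^2\bigr)\,|u(x)-u(y)|^2K(x,y)$, and on this region the minimum can be arbitrarily close to~$1$ (take $x$ near $\partial B_{R+2}$); since the same integrand appears on the $u$-side with coefficient exactly~$1$, subtracting leaves a term of the \emph{wrong} sign and no absorption is possible. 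Your alternative of ``integrating the weight $\dist(\cdot,B_{R+1})^2$ against the kernel'' tacitly replaces $|u(x)-u(y)|^2$ by a constant, which renders the near-diagonal part $\int_{|x-y|<1}|x-y|^{-n-2s}\,dy$ divergent; that device controls the $|\eta(x)-\eta(y)|^2$-term but not the residual $u$-term. In short, any competitor glued \emph{linearly} to $u$ across the annulus inherits the unknown $H^s$-energy of $u$ there, and the minimality inequality alone cannot eliminate it.

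The paper circumvents this via a min/max trick rather than a cutoff. One fixes a barrier $\psi$ (independent of $u$) equal to $-1$ on $B_{R+1}$ and $+1$ outside $B_{R+2}$, sets $v:=\min\{u,\psi\}$ and $w:=\max\{u,\psi\}$, and uses the elementary pointwise inequality of Lemma~\ref{uvmMlem},
\[
|m(x)-m(y)|^2+|M(x)-M(y)|^2\le|u(x)-u(y)|^2+|\psi(x)-\psi(y)|^2,
\]
to deduce from $\E(u;B_{R+2})\le\E(v;B_{R+2})$ that $\E(w;B_{R+2})\le\E(\psi;B_{R+2})$. Since $w=u$ on $B_{R+1}$, the difference $\E(u;B_R)-\E(w;B_R)$ is a pure tail over $B_R\times(\R^n\setminus B_{R+1})$ with bounded integrand, of size $O(R^{n-1}\Psi_s(R))$ by your own far-field computation; and $\E(\psi;B_{R+2})$ is explicitly $O(R^{n-1}\Psi_s(R))$ for the same reason. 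The point is that the min/max lemma transfers the ``annulus burden'' from the unknown $u$ to the \emph{fixed} function $\psi$, which is precisely what a multiplicative cutoff cannot achieve.
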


The above proposition will play an important
role later in Subsection~\ref{unconstrsubsec}, as it will imply that the interface region of a minimizer cannot be too wide.


Estimate~\eqref{enest} has first been proved in~\cite{CC14} and~\cite{SV14} for the fractional Laplacian. While in the first paper the authors use the harmonic extension of~$u$ to~$\R^{n + 1}_+$ to prove~\eqref{enest}, in the latter work the result is obtained by explicitly computing the energy~$\E$ of a suitable competitor of~$u$. It turns out that this strategy is flexible enough to be adapted to our framework and the proof of Proposition~\ref{enestprop} is actually an appropriate and careful
modification of that of~\cite[Theorem~1.3]{SV14}.

\medskip

Before heading to the proof of Proposition~\ref{enestprop}, we first need the following auxiliary result that will be also widely used in the following Section~\ref{s>1/2sec}.

\begin{lemma} \label{uvmMlem}
Let~$U, V$ be two measurable subsets of~$\R^n$ and~$u, v \in H^s_\loc(\R^n)$. Then,
\begin{equation} \label{KuvmM}
\K(\min \{ u, v \}; U, V) + \K( \max \{ u, v \}; U, V) \le \K(u; U, V) + \K(v; U, V),
\end{equation}
and
\begin{equation} \label{PuvmM}
\P(\min \{ u, v \}; U) + \P(\max \{ u, v\}; U) = \P(u; U) + \P(v; V).
\end{equation}
\end{lemma}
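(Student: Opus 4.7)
The plan is to reduce both assertions to pointwise (or $4$-variable numerical) identities/inequalities, then integrate.

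For the potential part \eqref{PuvmM}, the identity is purely pointwise. At every $x \in U$, the pair $\{\min\{u(x), v(x)\}, \max\{u(x), v(x)\}\}$ coincides as an unordered pair with $\{u(x), v(x)\}$, so
\[
W(x, \min\{u(x), v(x)\}) + W(x, \max\{u(x), v(x)\}) = W(x, u(x)) + W(x, v(x)).
\]
Integrating over $U$ yields \eqref{PuvmM} (modulo the apparent typo $V \to U$ on the right-hand side).

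For the kinetic inequality \eqref{KuvmM}, the main ingredient is the elementary four-variable bound
\[
\bigl(\min\{a,b\} - \min\{c,d\}\bigr)^2 + \bigl(\max\{a,b\} - \max\{c,d\}\bigr)^2 \le (a-c)^2 + (b-d)^2,
\]
valid for all $a,b,c,d \in \R$. I would verify this by checking the four cases corresponding to the sign of $a-b$ and of $c-d$. When $a-b$ and $c-d$ share a sign (or one is zero), both sides are equal. In the remaining cases the difference RHS~$-$~LHS reduces, after cancellation, to the nonnegative expression $2(a-b)(d-c)$, which has a favorable sign precisely because of the opposite orderings. Specializing to $a = u(x)$, $b = v(x)$, $c = u(y)$, $d = v(y)$, multiplying by the nonnegative kernel $K(x,y)$, integrating over $(x,y) \in U \times V$, and dividing by $2$ gives \eqref{KuvmM}.

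Integrability of each piece is not an issue: since $u,v \in H^s_\loc(\R^n)$ and $K$ satisfies \eqref{Kbounds}, each of the four double integrals of the form $\K(\cdot\,; U, V)$ is well-defined in $[0, +\infty]$, and the inequality is meaningful even if some side is infinite. The main (and really only) obstacle is the case analysis behind the numerical inequality; everything else is a matter of recognizing that both the min/max operation on functions and the kernel $K$ interact with the double integral in the pointwise manner described above. No adjustment for the domains $U, V$ is needed since the inequality is applied under the integral sign.
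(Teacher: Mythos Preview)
Your proof is correct and follows essentially the same route as the paper: both reduce~\eqref{KuvmM} to the pointwise four-variable inequality, split into the ``same ordering'' case (where equality holds) and the ``opposite ordering'' case (where the cross term $2(a-b)(d-c)$ has the right sign), and treat~\eqref{PuvmM} as an immediate pointwise identity. The only cosmetic differences are that the paper writes the computation directly in terms of $u(x),v(x),u(y),v(y)$ and disposes of the infinite-energy case by assuming the right-hand side is finite, whereas you phrase it as an abstract numerical inequality and note that the estimate is meaningful in $[0,+\infty]$ regardless.
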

\begin{proof}
Since the derivation of identity~\eqref{PuvmM} is quite straightforward, we focus on~\eqref{KuvmM} only.

We write for simplicity~$m := \min \{ u, v \}$ and~$M := \max \{ u, v \}$. Observe that we may assume the right-hand side of~\eqref{KuvmM} to be finite, the result being otherwise obvious.
In order to show~\eqref{KuvmM}, we actually prove the stronger pointwise relation
\begin{equation} \label{uvmMpw}
|m(x) - m(y)|^2 + |M(x) - M(y)|^2 \le |u(x) - u(y)|^2 + |v(x) - v(y)|^2,
\end{equation}
for a.e.~$x, y \in \R^n$.

Let then~$x$ and~$y$ be two fixed points in~$\R^n$. In order to check that~\eqref{uvmMpw} is true, we consider separately the two possibilities
\begin{enumerate}[i)]
\item $u(x) \le v(x)$ and~$u(y) \le v(y)$, or~$u(x) > v(x)$ and~$u(y) > v(y)$;
\item $u(x) \le v(x)$ and~$u(y) > v(y)$, or~$u(x) > v(x)$ and~$u(y) \le v(y)$.
\end{enumerate}
In the first situation it is immediate to see that~\eqref{uvmMpw} holds as an identity. Suppose then that point~ii) occurs. If this is the case, we compute
\begin{align*}
& |m(x) - m(y)|^2 + |M(x) - M(y)|^2 \\
& \hspace{60pt} = |u(x) - v(y)|^2 + |v(x) - u(y)|^2 \\
& \hspace{60pt} = |u(x) - u(y)|^2 + |v(x) - v(y)|^2 + 2 \left( u(x) - v(x) \right) \left( u(y) - v(y) \right) \\
& \hspace{60pt} \le |u(x) - u(y)|^2 + |v(x) - v(y)|^2,
\end{align*}
which is~\eqref{uvmMpw}. The proof of the lemma is thus complete.
\end{proof}

\begin{proof}[Proof of Proposition~\ref{enestprop}]
Without loss of generality, we assume~$x_0$ to be the origin. In the course of the proof we will denote as~$c$ any positive constant which depends at most on~$n$,~$s$,~$\Lambda$ and~$W^*$.

Let~$\psi$ be the radially symmetric function defined by
$$
\psi(x) := 2 \min \left\{ (|x| - R - 1)_+, 1 \right\} - 1 = \begin{cases}
-1              & \mbox{ if } x \in B_{R + 1} \\
2 |x| - 2 R - 1 & \mbox{ if } x \in B_{R + 2} \setminus B_{R + 1} \\
1               & \mbox{ if } x \in \R^n \setminus B_{R + 2}.
\end{cases}
$$
We claim that~$\psi$ satisfies~\eqref{enest} in~$B_{R + 2}$, that is
\begin{equation} \label{psienest}
\E(\psi; B_{R + 2}) \le c R^{n - 1} \Psi_s(R).
\end{equation}
Indeed, let~$x \in B_{R + 2}$ and set~$d(x) := \max \{ R - |x|, 1 \}$. It is easy to see that
$$
\left| \psi(x) - \psi(y) \right| \le 2 \begin{cases}
d(x)^{-1}|x - y| & \quad \mbox{if } |x - y| < d(x) \\
1                & \quad \mbox{if } |x - y| \ge d(x).
\end{cases}
$$
Consequently, applying~\eqref{Kbounds} we compute
\begin{align*}
\int_{\R^n} |\psi(x) - \psi(y)|^2 K(x, y) \, dy & \le 4 \omega_{n - 1} \Lambda \left[ d(x)^{-2} \int_0^{d(x)} \rho^{1 - 2 s} \, d\rho + \int_{d(x)}^{+\infty} \rho^{- 1 - 2 s} \, d\rho \right] \\
& \le c d(x)^{- 2 s}.
\end{align*}
Furthermore, using polar coordinates we get
\begin{equation} \label{d(x)est}
\begin{aligned}
\int_{B_{R + 2}} d(x)^{- 2 s} \, dx & = \int_{B_{R - 1}} \frac{dx}{\left( R - |x| \right)^{2 s}} + \int_{B_{R + 2} \setminus B_{R - 1}} dx \le c R^{n - 1} \Psi_s(R).
\end{aligned}
\end{equation}
Hence,
$$
\int_{B_{R + 2}} \int_{\R^n} |\psi(x) - \psi(y)|^2 K(x, y) \, dx dy \le c R^{n - 1} \Psi_s(R).
$$
Since by~\eqref{Wbound} and~\eqref{Wzeros} we also have
$$
\P(\psi, B_{R + 2}) = \int_{B_{R + 2}} W(x, \psi(x)) \, dx \le W^* \int_{B_{R + 2} \setminus B_{R + 1}} dx \le c R^{n - 1},
$$
it is clear that estimate~\eqref{psienest} follows.

Now, set~$v := \min \{ u, \psi \}$ and~$w := \max \{ u, \psi \}$. By the definition of~$\psi$ and the fact that~$-1 \le u \le 1$, we observe that
\begin{equation} \label{u=venest}
u = v \quad \mbox{in } \R^n \setminus B_{R + 2},
\end{equation}
and
\begin{equation} \label{u=wenest}
u = w \quad \mbox{in } B_{R + 1}.
\end{equation}
By virtue of~\eqref{u=wenest},
\begin{equation} \label{KP=}
\K(u; B_R, B_R) = \K(w; B_R, B_R) \quad \mbox{and} \quad \P(u; B_R) = \P(w; B_R).
\end{equation}
On the other hand, we claim that
\begin{equation} \label{u-w}
\K(u; B_R, \R^n \setminus B_R) \le \K(w; B_R, \R^n \setminus B_R) + c R^{n - 1} \Psi_s(R).
\end{equation}
Indeed, using~\eqref{Kbounds},~\eqref{u=wenest} and the fact that~$|u|, |\psi| \le 1$ a.e. in~$\R^n$, we compute
\begin{align*}
& \K(u; B_R, \R^n \setminus B_R) - \K(w; B_R, \R^n \setminus B_R) \\
& \hspace{50pt} = \frac{1}{2} \int_{B_R} \left( \int_{\R^n \setminus B_{R + 1}} \left[ |u(x) - u(y)|^2 - |u(x) - w(y)|^2 \right] K(x, y) \, dy \right) dx \\
& \hspace{50pt} \le 2 \Lambda \int_{B_R} \left( \int_{\R^n \setminus B_{R + 1}} |x - y|^{- n - 2 s} \, dy \right) dx  \le c \int_{B_R} d(x)^{- 2 s} \, dx,
\end{align*}
and claim~\eqref{u-w} then follows from~\eqref{d(x)est}. Accordingly, by~\eqref{u-w} and~\eqref{KP=} we obtain that
\begin{equation} \label{Euwenest}
\E(u; B_R) \le \E(w; B_R) + c R^{n - 1} \Psi_s(R).
\end{equation}

We now take advantage of the minimality of~$u$ and~\eqref{u=venest} to deduce
$$
\E(u; B_{R + 2}) \le \E(v; B_{R + 2}).
$$
Then, from this and Lemma~\ref{uvmMlem} it follows immediately that
\begin{equation} \label{Ewpsienest}
\E(w; B_R) \le \E(w; B_{R + 2}) \le \E(\psi; B_{R + 2}).
\end{equation}
Note that the first inequality above is true as a consequence of the inclusion~$\C_{B_R} \subset \C_{B_{R + 2}}$ (see Remark~\ref{restrmk}). By applying in sequence~\eqref{Euwenest},~\eqref{Ewpsienest} and~\eqref{psienest}, we finally get~\eqref{enest}.
\end{proof}

\section{Proof of Theorem~\ref{mainthm} for rapidly decaying kernels} \label{s>1/2sec}

\counterwithin{definition}{subsection}
\counterwithin{equation}{subsection}

The present section contains the proof of Theorem~\ref{mainthm} under the additional assumption that~$K$ satisfies
\begin{equation} \tag{K4} \label{Krapid}
K(x, y) \le \frac{\Gamma}{|x - y|^{n + \beta}} \quad \mbox{for a.e. } x, y \in \R^n \mbox{ such that } |x - y| \ge \bar{R}, \mbox{ with } \beta > 1,
\end{equation}
for some constants~$\Gamma, \bar{R} > 0$. We stress that this hypothesis is merely technical and in fact it will be removed later in Section~\ref{s<1/2sec}. However, we need the fast decay of the kernel~$K$ at infinity - ensured by the fact that~$\beta > 1$ - in order to perform a delicate construction at some point
(roughly speaking, the decay assumed in~\eqref{Krapid}
is needed to ensure the existence of a competitor with
finite energy in the large, but the geometric estimates
will be independent of the quantities
in~\eqref{Krapid} and this will allow
us to perform a limit procedure). Hence, we assume~\eqref{Krapid} to hold in the whole section.

Notice that if~$s > 1/2$, then~\eqref{Krapid} is automatically fulfilled in view of~\eqref{Kbounds}.

\medskip

The argument leading to the proof
of Theorem~\ref{mainthm} is long and articulated. Therefore, we divide the section into several subsections which we hope will make the reading easier.

We first deal with the case of a rational direction~$\omega$. Under this assumption, we can take advantage of the equivalence relation~$\sim_\omega$ defined in~\eqref{simrel} to build the minimizer. This construction occupies Subsections~\ref{minpersubsec}-\ref{unconstrsubsec}.

Irrational directions - i.e.~$\omega \in \R^n \setminus \Q^n$ - are then treated in Subsection~\ref{irrsubsec} as limiting cases.

\medskip

For simplicity of exposition, we restrict ourselves to consider~$\theta = 9 / 10$. The general case is in no way different. Of course, the choice~$9 / 10$ is made in order to represent a value of~$\theta$ close to~$1$.

\subsection{Minimization with respect to periodic perturbations} \label{minpersubsec}

Let~$\omega \in \Q^n \setminus \{ 0 \}$ be fixed. Given a measurable function~$u: \R^n \to \R$, we say that~$u \in L^2_\loc(\tRn)$ if~$u \in L^2_\loc(\R^n)$ and~$u$ is periodic with respect to~$\sim$. Given~$A < B$, let
$$
\AAB := \left\{ u \in L^2_\loc(\tRn) : u(x) \ge \frac{9}{10} \mbox{ if } \omega \cdot x \le A \mbox{ and } u(x) \le - \frac{9}{10} \mbox{ if } \omega \cdot x \ge B \right\},
$$
be the set of admissible functions. We introduce the auxiliary functional
\begin{equation} \label{perfunc}
\begin{aligned}
\F(u) := & \, \K(u; \tRn, \R^n) + \P(u; \tRn) \\
= & \, \frac{1}{2} \int_{\tRn} \int_{\R^n} |u(x) - u(y)|^2 K(x, y) \, dx dy + \int_{\tRn} W(x, u(x)) \, dx.
\end{aligned}
\end{equation}
Note that in the integrals above,~$\tRn$ stands for any fundamental domain of the relation~$\sim$. In the following, we will often identify quotients with any of their respective fundamental domains.

\medskip

The aim of this subsection is to prove the existence of an \emph{absolute minimizer} of~$\F$ within the class~$\AAB$, that is a function~$u \in \AAB$ such that~$\F(u) \le \F(v)$, for any~$v \in \AAB$. Such minimizers are the building blocks of our construction, as will become clear in the sequel.

As a first step toward this goal, we show that~$\F$ is not identically infinite on~$\AAB$.

\begin{lemma} \label{finfunlem}
Let~$\bar{u} \in \AAB$ be defined by setting~$\bar{u}(x) := \bar{\mu}(\omega \cdot x)$, where~$\bar{\mu}$ is the piecewise linear function given by
$$
\bar{\mu}(t) := \begin{cases}
1                                        & \mbox{if } t \le A \\
1 - \frac{2}{B - A} \left( t - A \right) & \mbox{if } A < t \le B \\
- 1                                      & \mbox{if } t > B.
\end{cases}
$$
Then,~$\F(\bar{u}) < +\infty$.
\end{lemma}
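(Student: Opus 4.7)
The plan is to show finiteness of the potential part $\P(\bar{u}; \tRn)$ and the kinetic part $\K(\bar{u}; \tRn, \R^n)$ of $\F(\bar{u})$ separately, fixing once and for all a fundamental domain $D$ for the relation $\sim_\omega$; since $\omega \in \Q^n \setminus \{0\}$, we can take $D$ to be a ``tube'' whose cross-section $D \cap \omega^\perp$ is a bounded fundamental domain of the rank-$(n-1)$ lattice $\Z^n \cap \omega^\perp$, and which is unbounded only in the $\omega$-direction.

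For the potential term, note that $\bar{u}(x) \in \{-1, +1\}$ whenever $\omega \cdot x \notin [A, B]$, so assumption~\eqref{Wzeros} makes $W(x, \bar{u}(x))$ vanish outside the bounded slab $S := D \cap \{ A \le \omega \cdot x \le B\}$. Combining with the uniform bound~\eqref{Wbound}, $\P(\bar{u}; \tRn) \le W^* |S| < +\infty$.

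For the kinetic term, I split $D$ into the ``transition slab'' $D_0 := D \cap \{ A - 1 \le \omega \cdot x \le B + 1\}$ and its complement in $D$. On $D_0$ (which has finite Lebesgue measure) I use the two pointwise bounds $|\bar u(x) - \bar u(y)|^2 \le \tfrac{4|\omega|^2}{(B-A)^2}|x-y|^2$ and $|\bar u(x) - \bar u(y)|^2 \le 4$: the first, coupled with $K \le \Lambda |x-y|^{-n-2s}$ from~\eqref{Kbounds}, controls the integral over $\{|y-x|<1\}$ (since $\int_{|z|<1}|z|^{2-n-2s}\,dz < +\infty$), while the second, coupled with~\eqref{Kbounds} for $1 \le |y-x| \le \bar{R}$ and with~\eqref{Krapid} for $|y-x| > \bar{R}$ (so that $\int_{|z|>\bar R}|z|^{-n-\beta}\,dz < +\infty$ thanks to $\beta > 0$), finishes this part.

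The main obstacle, and the whole point of assumption~\eqref{Krapid}, lies in the ``far'' region $D_- := D \cap \{\omega \cdot x \le A - 1\}$ (the symmetric region $\omega \cdot x \ge B+1$ is analogous). On $D_-$ one has $\bar u(x)=1$, so the integrand vanishes unless $\omega \cdot y > A$, which forces $|x-y| \ge d(x)/|\omega|$ with $d(x) := A - \omega\cdot x \ge 1$. Using $|\bar u(x)-\bar u(y)|^2\le 4$ and splitting the $y$-integral at the threshold $|x-y|=\bar R$ (applying~\eqref{Kbounds} below it and~\eqref{Krapid} above), one obtains
\[
\int_{\R^n} |\bar u(x)-\bar u(y)|^2 K(x,y)\,dy \;\le\; C\bigl(d(x)^{-2s} + d(x)^{-\beta}\bigr),
\]
for some constant $C$ depending on $n,s,\lambda,\Lambda,\Gamma,\bar R,|\omega|$. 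Parametrising $D_-$ by $t = \omega\cdot x \in (-\infty, A-1]$ together with cross-section coordinates in the bounded fundamental domain of $\Z^n\cap\omega^\perp$, the outer integration reduces to a constant multiple of $\int_{-\infty}^{A-1}\bigl((A-t)^{-2s} + (A-t)^{-\beta}\bigr)\,dt$, and the condition $\beta > 1$ in~\eqref{Krapid} is precisely what makes this integral converge at $-\infty$ (the term $(A-t)^{-2s}$ is harmless since it is bounded on $d(x)\ge 1$, or alternatively can be absorbed using $\beta > 1$ after enlarging $\bar R$). Summing the two regions yields $\K(\bar u; \tRn, \R^n) < +\infty$, completing the proof.
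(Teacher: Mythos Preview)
Your approach is correct and parallels the paper's: both split the kinetic integral into a bounded transition region (where the Lipschitz bound on~$\bar u$ handles the singularity of the kernel) and an unbounded far region (where the fast decay~\eqref{Krapid} with~$\beta>1$ yields integrability). The paper carries this out after an explicit reduction to~$\omega=e_n$, $A=0$, $B=1$, and computes the tail via the substitution~$z'=(y'-x')/|x_n-y_n|$; you keep the argument coordinate-free, which is slightly cleaner.

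One point needs correction. In the far region~$D_-$, your bound
\[
\int_{\R^n} |\bar u(x)-\bar u(y)|^2 K(x,y)\,dy \;\le\; C\bigl(d(x)^{-2s}+d(x)^{-\beta}\bigr)
\]
leads you to assert that ``the term~$(A-t)^{-2s}$ is harmless since it is bounded on~$d(x)\ge 1$''. This is wrong as stated: boundedness of the integrand does not give finiteness of~$\int_{-\infty}^{A-1}(A-t)^{-2s}\,dt$, which diverges whenever~$s\le 1/2$. Your alternative remark is the actual fix, once made precise: the~$d(x)^{-2s}$ contribution arises only from the annulus~$d(x)/|\omega| \le |x-y| \le \bar R$, which is \emph{empty} as soon as~$d(x) > \bar R\,|\omega|$. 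Hence that term is supported on a bounded portion of~$D_-$, and the integrability at infinity is governed solely by~$d(x)^{-\beta}$ with~$\beta>1$. With this clarification your argument goes through.
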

\begin{proof}
Since~$W(x, \cdot)$ vanishes at~$\pm 1$, for a.e.~$x \in \R^n$, it is clear that the potential term of~$\F$ evaluated at~$\bar{u}$ is finite. Thus, we only need to estimate the kinetic term. To do this, by~\eqref{Kbounds} and~\eqref{Krapid}, it is in turn sufficient to show that
\begin{equation} \label{finkinclaim}
\int_{\tRn} \left( \int_{B_{\bar{R}}(x)} \frac{|\bar{u}(x) - \bar{u}(y)|^2}{|x - y|^{n + 2 s}} \, dy + \int_{\R^n \setminus B_{\bar{R}}(x)} \frac{|\bar{u}(x) - \bar{u}(y)|^2}{|x - y|^{n + \beta}} \, dy \right) dx < +\infty.
\end{equation}
Notice that, up to an affine transformation, we may take~$\omega = e_n$. Moreover, we assume for simplicity that~$A = 0$ and~$B = 1$. In this setting, we have~$\tRn = [0, 1]^{n - 1} \times \R$ and, consequently,~\eqref{finkinclaim} is equivalent to
\begin{equation} \label{finkinclaim2}
I := \int_{[0, 1]^{n - 1} \times \R} \left( \int_{B_{\bar{R}}(x)} \frac{|\bar{u}(x) - \bar{u}(y)|^2}{|x - y|^{n + 2 s}} \, dy \right) dx < +\infty,
\end{equation}
and
\begin{equation} \label{finkinclaim3}
J := \int_{[0, 1]^{n - 1} \times \R} \left( \int_{\R^n \setminus B_{\bar{R}}(x)} \frac{|\bar{u}(x) - \bar{u}(y)|^2}{|x - y|^{n + \beta}} \, dy \right) dx < +\infty.
\end{equation}

By the definition of~$\bar{u}$, it is clear that
$$
I = \int_{[0, 1]^{n - 1} \times [- \bar{R}, \bar{R} + 1]} \left( \int_{B_{\bar{R}}(x)} \frac{|\bar{u}(x) - \bar{u}(y)|^2}{|x - y|^{n + 2 s}} \, dy \right) dx.
$$
Then, we take advantage of~$\bar{u}$ being Lipschitz to compute, using polar coordinates,
$$
I \le 4 \int_{[0, 1]^{n - 1} \times [- \bar{R}, \bar{R} + 1]} \left( \int_{B_{\bar{R}}(x)} \frac{dy}{|x - y|^{n + 2 s - 2}} \right) dx = \frac{2 n \alpha_n}{1 - s} (2 \bar{R} + 1) \bar{R}^{2 - 2 s},
$$
which implies~\eqref{finkinclaim2}.

On the other hand, to prove~\eqref{finkinclaim3} we first write~$J = J_1 + J_2 + J_3$, where
\begin{align*}
J_1 & := \int_{[0, 1]^{n - 1} \times [2, +\infty)} \left( \int_{\R^n \setminus B_{\bar{R}}(x)} \frac{|\bar{u}(x) - \bar{u}(y)|^2}{|x - y|^{n + \beta}} \, dy \right) dx, \\
J_2 & := \int_{[0, 1]^{n - 1} \times (- \infty, - 1]} \left( \int_{\R^n \setminus B_{\bar{R}}(x)} \frac{|\bar{u}(x) - \bar{u}(y)|^2}{|x - y|^{n + \beta}} \, dy \right) dx, \\
J_3 & := \int_{[0, 1]^{n - 1} \times [- 1, 2]} \left( \int_{\R^n \setminus B_{\bar{R}}(x)} \frac{|\bar{u}(x) - \bar{u}(y)|^2}{|x - y|^{n + \beta}} \, dy \right) dx.
\end{align*}
Using the definition of~$\bar{u}$, we observe that
\begin{align*}
J_1 & \le \int_{[0, 1]^{n - 1} \times [2, +\infty)} \left( \int_{\R^{n - 1} \times (- \infty, 1]} \frac{|- 1 - \bar{\mu}(y_n)|^2}{|x - y|^{n + \beta}} \, dy \right) dx \\
& \le 4 \int_{[0, 1]^{n - 1} \times [2, +\infty)} \left( \int_{\R^{n - 1} \times (- \infty, 1]} \frac{dy}{|x - y|^{n + \beta}} \right) dx.
\end{align*}
Making the substitution~$z' := (y' - x') / |x_n - y_n|$, we have
\begin{align*}
\int_{\R^{n - 1} \times (- \infty, 1]} \frac{dy}{|x - y|^{n + \beta}} & = \int_{- \infty}^1 |x_n - y_n|^{- n - \beta} \left[ \int_{\R^{n - 1}} \left( 1 + \frac{|x' - y'|^2}{|x_n - y_n|^2} \right)^{- \frac{n + \beta}{2}} dy' \right] dy_n \\
& = \int_{- \infty}^1 |x_n - y_n|^{- 1 - \beta} \left[ \int_{\R^{n - 1}} \left( 1 + |z'|^2 \right)^{- \frac{n + \beta}{2}} dz' \right] dy_n \\
& = \frac{\Xi}{\beta} (x_n - 1)^{- \beta},
\end{align*}
where we denoted with~$\Xi$ the finite quantity
$$
\int_{\R^{n - 1}} \left( 1 + |z'|^2 \right)^{- \frac{n + \beta}{2}} dz'.
$$
Accordingly,
\begin{align*}
J_1 \le \frac{4 \Xi}{\beta} \int_{2}^{+ \infty} (x_n - 1)^{- \beta} dx_n = \frac{4 \Xi}{(\beta - 1) \beta},
\end{align*}
since~$\beta > 1$. Similarly, one checks that~$J_2$ is finite too. The computation of~$J_3$ is simpler. By taking advantage of the fact that~$\bar{u}$ is a bounded function and switching to polar coordinates, we get
$$
J_3 \le 4 \int_{[0, 1]^{n - 1} \times [- 1, 2]} \left( \int_{\R^n \setminus B_{\bar{R}}(x)} \frac{dy}{|x - y|^{n + \beta}} \right) dx = \frac{12 n \alpha_n}{\beta} \bar{R}^{- \beta}.
$$
Hence,~\eqref{finkinclaim3} follows.
\end{proof}

We want to highlight how crucial condition~\eqref{Krapid} has been in the proof of the above lemma. Indeed, if the kernel~$K$ has a slower decay at infinity, the result is no longer true. Lemma~\ref{rapidneclem} in Appendix~\ref{addcompapp} shows that, under this assumption, the functional~$\F$ is nowhere finite on the whole class of admissible functions~$\AAB$.

We also point out that this is the only part of the section in which we need the additional hypothesis~\eqref{Krapid} and future computations will involve neither~$\beta$, nor~$\bar{R}$, nor~$\Gamma$.

With the aid of the finiteness result yielded by Lemma~\ref{finfunlem}, we can now prove the existence of minimizers.

\begin{proposition} \label{perminprop}
There exists an absolute minimizer of the functional~$\F$ within the class~$\AAB$.
\end{proposition}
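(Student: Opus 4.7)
The plan is the direct method of the calculus of variations. By Lemma~\ref{finfunlem} we have $\inf_{\AAB} \F < +\infty$, so I can pick a minimizing sequence $\{u_k\} \subset \AAB$ with $\F(u_k) \searrow \inf_{\AAB} \F$.

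\textbf{Step 1 (truncation).} First I would replace each $u_k$ by $\tilde u_k := (u_k \wedge 1) \vee (-1)$. Since the thresholds $\pm 9/10$ defining $\AAB$ already lie inside $[-1,1]$, truncation preserves membership in $\AAB$ (and $\sim$-periodicity). The pointwise bound $|\tilde u_k(x) - \tilde u_k(y)| \le |u_k(x) - u_k(y)|$, from the fact that truncation is a $1$-Lipschitz retraction, handles the kinetic part. For the potential part, on $\{|u_k| > 1\}$ the truncated value equals $\pm 1$, so by \eqref{Wzeros} and $W \ge 0$ one has $W(x, \tilde u_k(x)) = 0 \le W(x, u_k(x))$, while $\tilde u_k = u_k$ elsewhere. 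Hence $\F(\tilde u_k) \le \F(u_k)$, and I may assume $|u_k| \le 1$ a.e.

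\textbf{Step 2 (compactness).} I would exhaust $\tRn$ by an increasing sequence of bounded measurable sets $\Omega_j$. From $\F(u_k) \le C$ together with the lower bound in \eqref{Kbounds}, a standard computation yields a uniform bound on the Gagliardo seminorm $[u_k]_{H^s(\Omega_j)}$; combined with $\|u_k\|_{L^\infty(\R^n)} \le 1$, this gives a uniform $H^s(\Omega_j)$ bound. Rellich's theorem plus a diagonal extraction produces a subsequence (still denoted $u_k$) converging to some $u$ in $L^2_\loc(\tRn)$ and, on a further subsequence, pointwise a.e.; by $\sim$-periodicity, the convergence holds a.e.\ on $\R^n$.

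\textbf{Step 3 (admissibility and lower semicontinuity).} Pointwise a.e.\ convergence preserves the inequalities $u \ge 9/10$ on $\{\omega \cdot x \le A\}$ and $u \le -9/10$ on $\{\omega \cdot x \ge B\}$ as well as $\sim$-periodicity, so $u \in \AAB$. Fatou's lemma applied to the non-negative integrands $|u_k(x) - u_k(y)|^2 K(x,y)$ on $\tRn \times \R^n$ and $W(x, u_k(x))$ on $\tRn$---the latter using continuity of $W(x, \cdot)$ granted by \eqref{Wbound}---yields $\F(u) \le \liminf_k \F(u_k) = \inf_{\AAB} \F$, so $u$ is an absolute minimizer.

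\textbf{Main obstacle.} The delicate point is compactness: $\tRn$ is non-compact in the $\omega$-direction, so Rellich cannot be invoked globally, and no uniform $L^2(\tRn)$ bound on $u_k$ is available a priori since finiteness of the potential forces $u_k$ to asymptote to the pure phases $\pm 1$ at infinity. What saves the argument is that the defining constraints of $\AAB$ are pointwise a.e.\ and hence stable under a.e.\ limits, so local compactness combined with a diagonal extraction over an exhausting sequence is sufficient to produce an admissible limit.
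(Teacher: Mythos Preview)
Your proof is correct and follows essentially the same route as the paper: truncate to $|u_k|\le 1$, obtain uniform $H^s$ bounds on an exhausting family of bounded domains in $\tRn$ via~\eqref{Kbounds}, extract a diagonal subsequence converging a.e., and conclude by Fatou. The only minor point is that in Step~2 you should take the exhausting sets $\Omega_j$ to be Lipschitz (as the paper does, setting $\Omega_k := \tRn \cap \{|\omega\cdot x|\le k\}$), since the compact embedding $H^s(\Omega_j)\hookrightarrow L^2(\Omega_j)$ requires some boundary regularity rather than mere measurability.
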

\begin{proof}
Our argumentation follows the lines of the standard Direct Method of the Calculus of Variations.

By Lemma~\ref{finfunlem} and the fact that~$\F$ is non-negative, we know that
$$
m := \inf \left\{ \F(u) : u \in \AAB \right\} \in [0, +\infty).
$$
Let then~$\{ u_j \}_{j \in \N} \subseteq \AAB$ be a minimizing sequence. Observe that we may assume without loss of generality that
\begin{equation} \label{ujle1}
|u_j| \le 1 \quad \mbox{a.e. in } \R^n,
\end{equation}
as this restriction only makes the energy~$\F$ decrease. Moreover, we fix an integer~$k > \max \{ -A, B \}$ and consider the Lipschitz domains
$$
\Omega_k := \tRn \cap \left\{ x \in \R^n : |\omega \cdot x| \le k  \right\}.
$$
By~\eqref{ujle1} and~\eqref{Kbounds} we have
\begin{align*}
[u_j]_{H^s(\Omega_k)}^2 & \le \int_{\Omega_k} \left( \int_{B_1(x)} \frac{\left| u_j(x) - u_j(y) \right|^2}{|x - y|^{n + 2 s}} \, dy \right) dx + 4 \int_{\Omega_k} \left( \int_{\R^n \setminus B_1(x)} \frac{dy}{|x - y|^{n + 2 s}} \right) dx \\
& \le \frac{2}{\lambda} \F(u_j) + \frac{2 n \alpha_n |\Omega_k|}{s},
\end{align*}
so that~$\{ u_j \}$ is bounded in~$H^s(\Omega_k)$, uniformly in~$j$. By the compact embedding of~$H^s(\Omega_k)$ into~$L^2(\Omega_k)$ (see e.g. Theorem~7.1 of~\cite{DPV12}), we then deduce that a subsequence of~$\{ u_j \}$ converges to some function~$u$ in~$L^2(\Omega_k)$ and, thus, a.e. in~$\Omega_k$. Using a diagonal argument (on~$j$ and~$k$), we may indeed find a subsequence~$\{ u_j^* \}$ of~$\{ u_j \}$ which converges to~$u$ a.e. in~$\tRn$. Furthermore, we may identify the~$u_j^*$'s and~$u$ with their~$\sim$-periodic extensions to~$\R^n$ and thus obtain that such convergence is a.e. in the whole space~$\R^n$. Accordingly,~$u \in \AAB$ and an application of Fatou's lemma shows that~$\F(u) = m$. This concludes the proof.
\end{proof}

\subsection{The minimal minimizer} \label{minminsubsec}

Denote by~$\MAB$ the set composed by the absolute minimizers of~$\F$ in~$\AAB$, i.e.
$$
\MAB := \bigg\{ u \in \AAB : \F(u) \le \F(v) \mbox{ for any } v \in \AAB \bigg\}.
$$
Clearly,~$\MAB$ is not empty, as shown by Proposition~\ref{perminprop}. Here below we introduce a particular element of the class~$\MAB$, that will turn out to be of central interest in the remainder of the paper.

\begin{definition} \label{minmindef}
We define the~\emph{minimal minimizer}~$\uAB$ as the infimum of~$\MAB$ as a subset of the partially ordered set~$( \AAB, \le )$. More specifically,~$\uAB$ is the unique function of~$\AAB$ for which
\begin{equation} \label{minminprop1}
\uAB \le u \mbox{ in } \R^n \mbox{ for every } u \in \MAB
\end{equation}
and
\begin{equation} \label{minminprop2}
\mbox{if } v \in \AAB \mbox{ is s.t. } v \le u \mbox{ in } \R^n \mbox{ for every } u \in \MAB, \mbox{ then } v \le \uAB \mbox{ in } \R^n.
\end{equation}
\end{definition}


Of course, the existence of the minimal minimizer is far from being established. Aim of the subsection is to prove that such function is in fact well-defined and that it belongs to~$\MAB$ itself.

\medskip

In order to construct~$\uAB$ we first need to show that the set~$\MAB$ is closed with respect to the operation of taking the minimum between two of its elements. To do this, we actually prove a stronger fact, which will be needed, in its full generality, only later in Subsection~\ref{birksubsec}.

\begin{lemma} \label{min2lem}
Let~$A \le A'$ and~$B \le B'$, with~$A < B$ and~$A' < B'$. If~$u \in \MAB$ and~$v \in \mathcal{M}_\omega^{A', B'}$, then~$\min \{ u, v \} \in \MAB$.
\end{lemma}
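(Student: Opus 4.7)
The plan is to combine the admissibility of~$\min\{u,v\}$ and~$\max\{u,v\}$ with the kinetic-potential sup-inf relation of Lemma~\ref{uvmMlem}, and then to use the two minimality properties to pin down equality.

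\medskip

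First, I would set~$m := \min\{u,v\}$ and~$M := \max\{u,v\}$ and verify that~$m \in \AAB$ while~$M \in \mathcal{A}_\omega^{A', B'}$. The~$\sim_\omega$-periodicity of both is immediate from that of~$u$ and~$v$. For the sign constraints, the nesting~$A \le A'$ and~$B \le B'$ is exactly what makes things work: if~$\omega \cdot x \le A$, then also~$\omega \cdot x \le A'$, so both~$u(x) \ge 9/10$ and~$v(x) \ge 9/10$, whence~$m(x) \ge 9/10$; if~$\omega \cdot x \ge B$, then~$u(x) \le -9/10$, so~$m(x) \le u(x) \le -9/10$. A symmetric check, using~$B \le B'$, gives~$M \in \mathcal{A}_\omega^{A', B'}$.

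\medskip

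Next, I would apply Lemma~\ref{uvmMlem} with~$U$ a fundamental domain of~$\sim_\omega$ (identified with~$\tRn$) and~$V = \R^n$, adding the kinetic inequality to the potential identity to obtain
$$
\F(m) + \F(M) \le \F(u) + \F(v).
$$
Since~$u \in \MAB$ and~$m \in \AAB$, the minimality of~$u$ yields~$\F(u) \le \F(m)$; similarly, since~$v \in \mathcal{M}_\omega^{A', B'}$ and~$M \in \mathcal{A}_\omega^{A', B'}$, one has~$\F(v) \le \F(M)$. Lemma~\ref{finfunlem} guarantees~$\F(u), \F(v) < +\infty$, so these two inequalities combine with the previous display to force equality throughout, and in particular~$\F(m) = \F(u)$. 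Hence~$m$ attains the infimum of~$\F$ over~$\AAB$, that is~$m \in \MAB$.

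\medskip

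I do not anticipate any serious obstacle: this is the standard sup-inf trick for minimizers of additive functionals. The only point requiring some care is the finiteness of~$\F(u)$ and~$\F(v)$, without which the two chained inequalities could not be combined into equalities; this is exactly what Lemma~\ref{finfunlem} provides. No regularity of~$u$ or~$v$ beyond the minimization property is needed, and the particular form of the strips enters only through the nesting used in the admissibility check.
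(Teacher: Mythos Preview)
Your proposal is correct and follows essentially the same approach as the paper: both verify the admissibility of~$\min\{u,v\}$ and~$\max\{u,v\}$ in the respective classes, invoke Lemma~\ref{uvmMlem} to obtain~$\F(m)+\F(M)\le\F(u)+\F(v)$, and then use minimality to conclude. The only cosmetic difference is that the paper uses just the minimality of~$v$ to cancel~$\F(M)$ from both sides and deduce~$\F(m)\le\F(u)$ directly, whereas you chain both minimalities to force equality throughout; your version is slightly more explicit about the finiteness needed for the subtraction, which is a welcome touch.
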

\begin{proof}
First, notice that~$\min \{ u, v \} \in \AAB$ and~$\max \{ u, v \} \in \mathcal{A}_\omega^{A', B'}$. Moreover, employing Lemma~\ref{uvmMlem} we deduce
$$
\F(\min \{ u, v \}) + \F(\max \{ u, v \}) \le \F(u) + \F(v).
$$
Taking advantage of this inequality, together with the fact that~$v \in \mathcal{M}_\omega^{A', B'}$, we get
$$
\F(\min \{ u, v \}) + \F(\max \{ u, v \}) \le \F(u) + \F(\max \{ u, v \}),
$$
which in turn implies that
$$
\F(\min \{ u, v \}) \le \F(u).
$$
Consequently,~$\min \{ u, v \} \in \MAB$.
\end{proof}

By choosing~$A = A'$ and~$B = B'$, we obtain the desired

\begin{corollary} \label{min2cor}
Let~$u, v \in \MAB$. Then,~$\min \{ u, v \} \in \MAB$.
\end{corollary}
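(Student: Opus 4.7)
The plan here is essentially trivial, since Corollary~\ref{min2cor} is just the diagonal case of Lemma~\ref{min2lem}. I would simply take $A' := A$ and $B' := B$ in the hypothesis of that lemma: then $\mathcal{M}_\omega^{A',B'} = \MAB$, so any pair $u, v \in \MAB$ satisfies the assumptions of Lemma~\ref{min2lem}, and the conclusion $\min\{u,v\} \in \MAB$ is immediate. No further argument is needed and no obstacle arises at this step.

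It is perhaps worth recalling \emph{where} the content really sits. The substance of the statement is hidden inside Lemma~\ref{min2lem}, which in turn relies on Lemma~\ref{uvmMlem}: the pointwise inequality
\[
|m(x)-m(y)|^2 + |M(x)-M(y)|^2 \le |u(x)-u(y)|^2 + |v(x)-v(y)|^2
\]
for $m = \min\{u,v\}$, $M = \max\{u,v\}$, combined with the corresponding identity for the potential term, yields $\F(m) + \F(M) \le \F(u) + \F(v)$. Since $M$ lies in the (possibly larger) admissible class $\mathcal{A}_\omega^{A',B'}$ in which $v$ is a minimizer, one can subtract $\F(M) \ge \F(v)$ to get $\F(m) \le \F(u)$, which is the minimality needed. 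In the special case relevant to the corollary, the classes coincide, so both terms on the left are bounded from above using the minimality of $u$ and $v$ interchangeably, and the conclusion $m \in \MAB$ drops out. Thus the proof reduces to one line invoking Lemma~\ref{min2lem}.
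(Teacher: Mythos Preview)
Your proposal is correct and matches the paper's approach exactly: the paper derives the corollary in one line by specializing Lemma~\ref{min2lem} to the case $A' = A$, $B' = B$. Your additional paragraph recalling the mechanism behind Lemma~\ref{min2lem} (via Lemma~\ref{uvmMlem}) is accurate but not required for the proof of the corollary itself.
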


Now that we know that the minimum between two - and, consequently, any finite number of - minimizers is still a minimizer, we can show that also the infimum over a \emph{countable} family of elements of~$\MAB$ belongs to~$\MAB$.

\begin{lemma} \label{minnumlem}
Let~$\{ u_j \}_{j \in \N}$ be a sequence of elements of~$\MAB$. Then,~$\inf_{j \in \N} \limits u_j \in \MAB$.
\end{lemma}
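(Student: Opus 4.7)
The plan is to reduce to the two-element case of Corollary~\ref{min2cor} via a monotone approximation. First I would set
$$
v_k := \min\{u_1, u_2, \ldots, u_k\}
$$
and argue by induction on~$k$, using Corollary~\ref{min2cor} at each step, that every~$v_k$ belongs to~$\MAB$. By construction~$\{v_k\}$ is pointwise non-increasing, so it converges everywhere to $v := \inf_{j \in \N} u_j$. (One may freely replace each~$u_j$ by its truncation $\max\{-1, \min\{1, u_j\}\}$, which still lies in~$\MAB$, in order to ensure the sequence is uniformly bounded; but this is not strictly necessary for the argument below.)

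Next I would verify that~$v \in \AAB$. Measurability and $\sim_\omega$-periodicity pass to countable infima. For the sign constraints: if~$\omega \cdot x \leq A$, then $v_k(x) \geq 9/10$ for every~$k$, hence the pointwise limit satisfies~$v(x) \geq 9/10$; similarly, if~$\omega \cdot x \geq B$, then each~$u_j(x) \leq -9/10$, so~$v_k(x) \leq -9/10$, and therefore~$v(x) \leq -9/10$.

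It then remains to show $\F(v) \leq m$, where $m := \F(u_1) = \F(v_k)$ for every $k$. Since $v_k(x) \to v(x)$ pointwise, the non-negative integrands
$$
|v_k(x) - v_k(y)|^2 K(x, y) \quad \text{and} \quad W(x, v_k(x))
$$
converge pointwise (almost everywhere) to the corresponding quantities for~$v$, using continuity of~$W$ in the second variable (which is implicit in~\eqref{Wbound}). Applying Fatou's lemma to each of the two parts of~$\F$ gives
$$
\F(v) \leq \liminf_{k \to \infty} \F(v_k) = m,
$$
whence $v \in \MAB$.

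The only place one has to be a little careful is the passage to the limit in the kinetic part over the unbounded fundamental domain~$\tRn$; but since the integrand is non-negative and $K \geq 0$, Fatou applies without any further integrability assumption beyond the a.e. pointwise convergence guaranteed by monotonicity. No further obstacles arise.
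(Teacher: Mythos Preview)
Your argument is correct and is essentially the same as the paper's: define the monotone sequence of finite minima, use Corollary~\ref{min2cor} inductively to keep each term in~$\MAB$, and pass to the limit via Fatou's lemma. The paper is slightly terser in checking that the limit lies in~$\AAB$, but the substance is identical.
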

\begin{proof}
Write~$u_* := \inf_{j \in \N} \limits u_j$. We define inductively the auxiliary sequence
$$
v_j := \begin{cases}
u_1 & \mbox{if } j = 1 \\
\min \{ v_{j - 1}, u_j \} & \mbox{if } j \ge 2.
\end{cases}
$$
By Corollary~\ref{min2cor}, we know that~$\{ v_j \} \subseteq \MAB$. Moreover,~$v_j$ converges to~$u_*$ a.e. in~$\R^n$. An application of Fatou's lemma then yields that~$u_* \in \AAB$ and
$$
\F(u_*) \le \lim_{j \rightarrow +\infty} \F(v_j) = \F(v_k),
$$
for any~$k \in \N$. Therefore,~$u_* \in \MAB$.
\end{proof}

Finally, we are in position to prove the main result of the present subsection.

\begin{proposition} \label{minminismin}
The minimal minimizer~$\uAB$, as given by Definition~\ref{minmindef}, exists and belongs to~$\MAB$.
\end{proposition}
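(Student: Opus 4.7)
The plan is to realize $\uAB$ as a countable infimum drawn from $\MAB$; this infimum automatically belongs to $\MAB$ by Lemma~\ref{minnumlem}. The key observation is that, although $\MAB$ may be uncountable, the relevant pointwise infimum is already captured by a countable subfamily, thanks to the H\"older continuity of minimizers inside the strip combined with the density of $\Q^n$ in $\R^n$.

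Concretely, I would first note that every $u \in \MAB$ satisfies the weak Euler--Lagrange equation for $\F$ inside the strip $\{A < \omega \cdot x < B\}$: this is obtained by inserting $\sim$-periodic test perturbations with compact support in the strip into the first variation of $\F$. Theorem~\ref{holderthm} then furnishes a H\"older-continuous representative of each $u \in \MAB$ in the interior of the strip, which I adopt throughout. Next, fix a countable dense subset $D = \{x_k\}_{k \in \N}$ of $\R^n$; for each $k \in \N$ set
$$m_k := \inf\bigl\{\, u(x_k) : u \in \MAB \,\bigr\},$$
and for each $j \in \N$ choose $u_{k, j} \in \MAB$ with $u_{k, j}(x_k) \le m_k + 1/j$. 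Defining
$$\uAB := \inf_{k, j \in \N} u_{k, j},$$
Lemma~\ref{minnumlem} delivers $\uAB \in \MAB$ at once.

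Property~\eqref{minminprop1} is the main check. Since $\uAB \in \MAB$ forces $\uAB(x_k) \ge m_k$, while the construction forces $\uAB(x_k) \le u_{k, j}(x_k) \le m_k + 1/j$ for every $j$, we conclude $\uAB(x_k) = m_k$ at each $x_k \in D$. Given an arbitrary $u \in \MAB$ and $x \in \R^n$, picking $x_{k_n} \to x$ in $D$ and passing to the limit in $\uAB(x_{k_n}) = m_{k_n} \le u(x_{k_n})$ --- using the continuity of both $\uAB$ and $u$ --- yields $\uAB(x) \le u(x)$. Property~\eqref{minminprop2} is immediate: any $v \in \AAB$ bounded above by every $u \in \MAB$ is in particular bounded above by each $u_{k, j}$, hence by their pointwise infimum $\uAB$. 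Uniqueness of the minimal minimizer then follows by antisymmetry of the pointwise order.

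The main technical obstacle I anticipate is justifying the H\"older continuity of elements of $\MAB$. Since a minimizer of the auxiliary functional $\F$ is \emph{not}, a priori, a local minimizer of $\E$ in the sense of Definition~\ref{mindef} --- reconciling these two notions is indeed one of the objectives of the subsequent subsections --- Corollary~\ref{minminholdercor} cannot simply be quoted. Instead, one must derive the weak Euler--Lagrange equation for $\F$ from scratch, by identifying $\tRn$ with a fundamental domain and exploiting the $\sim$-periodic structure of the admissible test perturbations, after which Theorem~\ref{holderthm} applies inside the strip.
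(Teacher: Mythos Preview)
Your approach is genuinely different from the paper's. The paper does not invoke regularity at all at this stage; instead, it appeals to the separability of $\MAB$ with respect to a.e.\ convergence (established in Appendix~\ref{sepapp} via the metric structure of $L^p_\loc$). A countable dense family $\{u_j\}$ is chosen, $\uAB$ is defined as $\inf_j u_j$, and property~\eqref{minminprop1} follows because any $u\in\MAB$ is the a.e.\ limit of a subsequence of the $u_j$'s. This is purely measure--theoretic and sidesteps all questions about pointwise values.

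Your argument, by contrast, hinges on the continuity of every element of $\MAB$ so that pointwise infima at a dense set of points are meaningful and can be propagated by continuity. You correctly identify that local $\E$--minimality is not yet available and propose deriving the Euler--Lagrange equation for $\F$ directly; this can indeed be done, and by periodicity the resulting equation coincides with $\D_K(u,\cdot)=-W_r(\cdot,u)$ for compactly supported test functions, so Theorem~\ref{holderthm} applies. However, this only yields continuity in the \emph{open} strip $\{A<\omega\cdot x<B\}$, as you yourself note. Outside the strip the constraints $u\ge 9/10$ and $u\le -9/10$ are active inequality constraints; minimizers there satisfy at best a one--sided variational inequality (an obstacle problem), and no continuity is established. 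Consequently, your density argument only proves $\uAB\le u$ in the interior of the strip, and there is no mechanism in your proof forcing $\uAB\le u$ a.e.\ in the exterior half--spaces. Since~\eqref{minminprop1} requires the inequality on all of $\R^n$, this is a genuine gap.

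To close it you would need either obstacle--type regularity for minimizers outside the strip (substantial extra work, not available in the paper), or a separate measure--theoretic argument in the exterior---at which point the paper's separability route is both shorter and more robust.
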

\begin{proof}
The set~$\MAB$ is separable with respect to convergence a.e., i.e. there exists a sequence~$\{ u_j \}_{j \in \N} \subseteq \MAB$ such that for any~$u \in \MAB$ we may pick a subsequence~$\{ u_{j_k }\}$ which converges to~$u$ a.e. in~$\R^n$. A rigorous proof of this fact can be found in Proposition~\ref{aesepprop} of Appendix~\ref{sepapp}. Set
$$
\uAB := \inf_{j \in \N} u_j.
$$
By Lemma~\ref{minnumlem}, we already know that~$\uAB \in \MAB$. We claim that~$\uAB$ is the minimal minimizer, i.e. that satisfies the properties~\eqref{minminprop1} and~\eqref{minminprop2} listed in Definition~\ref{minmindef}.

Take~$u \in \MAB$ and let~$\{ u_{j_k}\}$ be a subsequence of~$\{ u_j \}$ converging to~$u$ a.e. in~$\R^n$. By definition,~$\uAB \le u_{j_k}$ in~$\R^n$, for any~$k \in \N$. Hence, taking the limit as~$k \rightarrow +\infty$, condition~\eqref{minminprop1} follows.

Now we turn our attention to~\eqref{minminprop2} and we assume that there exists~$v \in \AAB$ such that~$v \le u$, for any~$u \in \M$. Then, in particular, we have~$v \le u_j$, for any~$j \in \N$ which implies~$v \le \uAB$. Thus,~\eqref{minminprop2} follows and the proof of the proposition is complete.
\end{proof}

\subsection{The doubling property} \label{doubsubsec}

An important feature of
the minimal minimizer is the so-called~\emph{doubling property} (or~\emph{no-symmetry-breaking property}). Namely, we prove in this subsection that~$\uAB$ is still the minimal minimizer with respect to functions having periodicity multiple of~$\sim$. In order to formulate precisely this result, we need a few more notation.

\medskip

Let~$z_1, \ldots, z_{n - 1} \in \Z^n$ denote some vectors spanning the~$(n - 1)$-dimensional lattice induced by~$\sim$. Thus, any~$k \in \Z^n$ such that~$\omega \cdot k = 0$ may be written as
$$
k = \sum_{i = 1}^{n - 1} \mu_i z_i,
$$
for some~$\mu_1, \ldots, \mu_{n - 1} \in \Z$. For a fixed~$m \in \N^{n - 1}$, we introduce the equivalence relation~$\sim_m$, defined by setting
$$
x \sim_m y \quad \mbox{if and only if} \quad x - y = \sum_{i = 1}^{n - 1} \mu_i m_i z_i, \mbox{ for some } \mu_1, \ldots, \mu_{n - 1} \in \Z.
$$
Also, set~$\tRnm := \R^n / \sim_m$ and denote by~$L^2_\loc(\tRnm)$ the space of~$\sim_m$-periodic functions which belong to~$L^2_\loc(\R^n)$. Note that~$\tRnm$ contains exactly~$m_1 \cdot \ldots \cdot m_{n - 1}$ copies of~$\tRn$. Indeed, the relation~$\sim_m$ is weaker than~$\sim$ and~$L^2_\loc(\tRn) \subseteq L^2_\loc(\tRnm)$. We consider the space of admissible functions
$$
\Am := \bigg\{ u \in L^2_\loc(\tRnm) : u(x) \ge \frac{9}{10} \mbox{ if } \omega \cdot x \le A \mbox{ and } u(x) \le - \frac{9}{10} \mbox{ if } \omega \cdot x \ge B \bigg\},
$$
related to this new equivalence relation, together with the set of absolute minimizers
$$
\Mm := \bigg\{ u \in \Am : \Fm(u) \le \Fm(v) \mbox{ for any } v \in \Am \bigg\},
$$
of the functional
\begin{align*}
\Fm(u) := & \, \K(u; \tRnm, \R^n) + \P(u; \tRnm) \\
= & \, \frac{1}{2} \int_{\tRnm} \int_{\R^n} |u(x) - u(y)|^2 K(x, y) \, dx dy + \int_{\tRnm} W(x, u(x)) \, dx.
\end{align*}
We indicate with~$\um$ the minimal minimizer of the class~$\Mm$. Of course, its existence is granted by the same arguments of Subsection~\ref{minminsubsec}.

Finally, given a function~$u: \R^n \to \R$ and a vector~$z \in \R^n$, we denote the translation of~$u$ in the direction~$z$ as
\begin{equation} \label{functrnasl}
\tau_z u(x) := u(x - z) \quad \mbox{for any } x \in \R^n.
\end{equation}

\medskip

After this preliminary work, we can now prove that the minimal minimizer
in a class of larger period coincides with the one in a class of smaller
period:

\begin{proposition} \label{u=umprop}
For any~$m \in \N^{n - 1}$, it holds~$\um = \uAB$.
\end{proposition}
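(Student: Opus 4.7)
The plan is to prove equality by establishing the two inequalities $\uAB \le \um$ and $\um \le \uAB$ separately, each via the minimality properties \eqref{minminprop1}--\eqref{minminprop2}. The key observation is that $\AAB \subseteq \Am$, and for any $\sim_\omega$-periodic function $u$, the functional decomposes as $\Fm(u) = N \F(u)$, where $N := m_1 \cdots m_{n-1}$ counts the number of copies of a fundamental domain of $\sim_\omega$ contained in one of $\sim_m$. I will use this scaling relation combined with a symmetrization by translation in the lattice directions $z_i$.

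To prove $\uAB \le \um$, I first observe that the functional $\Fm$ is invariant under translations by elements of $\Z^n$, thanks to the periodicity of $K$ and $W$ encoded in \eqref{Kper} and \eqref{Wper}. Since each $z_i$ lies in $\Z^n$ and an $\sim_m$-periodic function remains $\sim_m$-periodic after translation by $z_i$, the translates $v_k := \tau_{k_1 z_1 + \ldots + k_{n-1} z_{n-1}} \um$ all belong to $\Mm$, for any $k \in \Z^{n-1}$. Next, I note that the analogue of Corollary~\ref{min2cor} for the class $\Mm$ holds by the very same proof based on Lemma~\ref{uvmMlem}. Applying it iteratively to the finite family indexed by $k \in \{0, \ldots, m_1 - 1\} \times \cdots \times \{0, \ldots, m_{n-1} - 1\}$, I obtain that the pointwise minimum $w := \min_{k} v_k$ still belongs to $\Mm$. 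A direct check shows that $w$ is $\sim_\omega$-periodic: translating its argument by some $z_i$ merely permutes the family of translates $v_k$ modulo $m_i z_i$, which leaves $w$ unchanged because each $v_k$ is itself $\sim_m$-periodic. Consequently $w \in \AAB$, and the identity $\Fm(w) = N \F(w)$ together with the minimizing property of $w$ in $\Am$ forces $\F(w) \le \F(u)$ for every $u \in \AAB$; that is, $w \in \MAB$. Minimality of $\uAB$ then gives $\uAB \le w \le \um$ in $\R^n$.

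To prove the reverse inequality $\um \le \uAB$, I show that $\uAB$ itself belongs to $\Mm$. Indeed, $\uAB \in \AAB \subseteq \Am$, and for any $v \in \Am$ one has
$$
\Fm(\uAB) = N \F(\uAB) \le N \F(w) = \Fm(w) \le \Fm(v),
$$
where in the first inequality I used $w \in \MAB$ and the minimality of $\uAB$ in $\MAB$, and in the last one I used $w \in \Mm$. This establishes $\uAB \in \Mm$, and the defining property \eqref{minminprop1} applied to $\um$ yields $\um \le \uAB$ in $\R^n$. Combined with the previous paragraph, this completes the proof.

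The only non-routine step I anticipate is verifying that the minimum $w$ of the translates is genuinely $\sim_\omega$-periodic and lies in $\Mm$; this hinges on closing the family of translates under the $\sim_m$-equivalence (so the minimum is well-defined over the finite index set) and on the stability of $\Mm$ under pairwise minima, both of which reduce to bookkeeping plus the already-proved Lemma~\ref{uvmMlem}. Everything else is a direct manipulation of the scaling identity $\Fm = N \F$ on $\sim_\omega$-periodic inputs.
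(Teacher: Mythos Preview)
Your argument is correct and follows essentially the same approach as the paper: you construct a $\sim_\omega$-periodic element of $\Mm$ by taking the minimum of lattice translates of $\um$, then exploit the scaling identity $\Fm = N\F$ on $\sim_\omega$-periodic functions to transfer minimality between the two classes. The paper carries this out only in the representative case $m = (2,1,\ldots,1)$ and presents the two inequalities in the opposite order, but the logic is identical.
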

\begin{proof}
For simplicity of exposition we restrict ourselves to the case in which~$m_1 = 2$ and~$m_i = 1$, for every~$i = 2, \ldots, n - 1$. The approach in the general case would be analogous, but much heavier in notation.

We begin by showing that~$\um \le \uAB$. Notice that the inequality follows if we prove that~$\uAB \in \Mm$. To see this, we consider the translation~$\tau_{z_1} \um$ of~$\um$ in the~\emph{doubled} direction~$z_1$. Clearly,~$\tau_{z_1} \um \in \Mm$. Then, we define
$$
\hatum := \min \left\{ \um, \tau_{z_1} \um \right\}.
$$
Observe that~$\hatum$ is~$\sim$-periodic and hence belongs to~$\AAB$. Then,
$$
\Fm(\uAB) = 2 \F(\uAB) \le 2 \F(\hatum) = \Fm(\hatum) \le \Fm(\um),
$$
where the last inequality follows by Lemma~\ref{uvmMlem}, arguing as in the proof of Lemma~\ref{min2lem}. Accordingly, we deduce that~$\uAB \in \Mm$ and so~$\um \le \uAB$, since~$\um$ is the minimal minimizer of~$\Mm$.

On the other hand, being~$\hatum \in \Mm$ and~$\uAB \in \Am$, we have
$$
\F(\hatum) = \frac{1}{2} \Fm(\hatum) \le \frac{1}{2} \Fm(\uAB) = \F(\uAB),
$$
which implies that~$\hatum \in \MAB$. Consequently,~$\uAB \le \hatum \le \um$, and the proposition is therefore proved.
\end{proof}

\subsection{Minimization with respect to compact perturbations} \label{cptsubsec}

In the previous subsections we have been concerned with functionals of the type~$\Fm$. We proved that absolute minimizers for such functionals exist in particular classes of~$\sim_m$-periodic functions. Since our ultimate goal is the construction of class~A minimizers for the energy~$\E$, we now need to show that the elements of~$\MAB$ are also minimizers of~$\E$ with respect to compact perturbations occurring within the strip
\begin{equation} \label{stripAB}
\SAB := \left\{ x \in \R^n : \omega \cdot x \in [A, B] \right\}.
\end{equation}
In what follows, it will also be useful to introduce the quotient
\begin{equation} \label{quotABm}
\tSm := \SAB / \sim_m.
\end{equation}

\medskip

The first result of the subsection addresses a general relationship intervening between the two functionals~$\E$ and~$\Fm$.

\begin{lemma} \label{EFrellem}
Let~$u \in \Am$ be a bounded function with finite~$\Fm$ energy. Given an open set~$\Omega$ compactly contained in~$\tSm$,\footnote{We stress that here~$\Omega$ is meant to be compactly contained in a fundamental domain of~$\tSm$, and not only in the quotient set itself. The difference is that we do not allow~$\Omega$ to touch the \emph{lateral} boundary of the domain. \label{cptfoot}} let~$v$ be another bounded function such that~$u = v$ outside~$\Omega$ and set~$\varphi := v - u$. Denoting with~$\tilde{v}$ and~$\tilde{\varphi}$ the~$\sim_m$-periodic extensions to~$\R^n$ of~$v|_{\tRnm}$ and~$\varphi|_{\tRnm}$, respectively, it then holds
\begin{equation} \label{EFrel}
\E(v; \tRnm) - \E(u; \tRnm) = \Fm(\tilde{v}) - \Fm(u) + \int_{\tRnm} \int_{\R^n \setminus \tRnm} \tilde{\varphi}(x) \tilde{\varphi}(y) K(x, y) \, dx dy.
\end{equation}
In particular, if~$u \in \Mm$, then
\begin{equation} \label{uphiE}
\E(v; \tRnm) - \E(u; \tRnm) \ge \int_{\tRnm} \int_{\R^n \setminus \tRnm} \tilde{\varphi}(x) \tilde{\varphi}(y) K(x, y) \, dx dy.
\end{equation}
\end{lemma}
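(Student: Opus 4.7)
My plan is to identify $\tRnm$ with a fundamental domain $D$ of the periodicity lattice $\Lambda_m$ associated to $\sim_m$ (chosen so that $\Omega \subset D$), and then to polarize both $\E(v;D)-\E(u;D)$ and $\Fm(\tilde v)-\Fm(u)$ around the common baseline $u$. Since $v = u+\hat\varphi$ with $\hat\varphi := \tilde\varphi\,\chi_D$, while $\tilde v = u+\tilde\varphi$, I would introduce the bilinear forms
\begin{align*}
\mathcal{Q}_1(f,g) &:= \frac{1}{2}\iint_{\C_D}(f(x)-f(y))(g(x)-g(y))K(x,y)\,dx\,dy,\\
\mathcal{Q}_2(f,g) &:= \frac{1}{2}\int_D\int_{\R^n}(f(x)-f(y))(g(x)-g(y))K(x,y)\,dx\,dy,
\end{align*}
so that polarization gives
$$
\E(v;D)-\E(u;D) = 2\mathcal{Q}_1(u,\hat\varphi) + \mathcal{Q}_1(\hat\varphi,\hat\varphi) + \int_D[W(x,v)-W(x,u)]\,dx,
$$
and analogously for $\Fm$ with $\mathcal{Q}_2$ and $\tilde\varphi$. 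Because $\hat\varphi \equiv \tilde\varphi$ on $D$, the potential contributions match and the $D \times D$ portions of the kinetic forms cancel in the subsequent subtraction.

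The entire discrepancy therefore concentrates on $D \times D^c$. A direct expansion on this region shows that, for every $\sim_m$-periodic $f$,
$$
\mathcal{Q}_1(f,\hat\varphi) - \mathcal{Q}_2(f,\tilde\varphi) = \frac{1}{2}\iint_{D\times D^c}(f(x)-f(y))(\tilde\varphi(x)+\tilde\varphi(y))K\,dx\,dy,
$$
while
$$
\mathcal{Q}_1(\hat\varphi,\hat\varphi) - \mathcal{Q}_2(\tilde\varphi,\tilde\varphi) = \iint_{D\times D^c}\tilde\varphi(x)\tilde\varphi(y)K\,dx\,dy + \frac{1}{2}\iint_{D\times D^c}(\tilde\varphi(x)^2-\tilde\varphi(y)^2)K\,dx\,dy.
$$
Hence~\eqref{EFrel} reduces to showing that the two ``antisymmetric'' integrals on $D\times D^c$, with integrands $(f(x)-f(y))(\tilde\varphi(x)+\tilde\varphi(y))K$ and $(\tilde\varphi(x)^2-\tilde\varphi(y)^2)K$, both vanish.

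The main obstacle is precisely this cancellation. My strategy is to combine the antisymmetry of both integrands under $(x,y)\leftrightarrow(y,x)$ (which uses $K(x,y)=K(y,x)$) with the discrete lattice structure. I would decompose $D^c = \bigsqcup_{T\in\Lambda_m\setminus\{0\}}(D+T)$ and, inside each summand, substitute $y = y'+T$ with $y'\in D$; the $\sim_m$-periodicity of $u$, $\tilde\varphi$ and the identity $K(x,y'+T) = K(x-T,y')$ transfer the integral onto $D\times D$ weighted by the shifted kernel $K(x,y+T)$, leaving the algebraic integrand untouched. Pairing $T$ with $-T$ (admissible since $T\neq -T$ for every $T\in\Z^n\setminus\{0\}$) and renaming $x\leftrightarrow y$ inside one of the two, the antisymmetric algebraic factor flips sign while $K(x,y+T)$ becomes $K(y,x+T) = K(x,y-T)$ by the joint use of the symmetry and periodicity of $K$. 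Thus the $T$ and $-T$ contributions are exact negatives and cancel in pairs, wiping out both antisymmetric integrals and leaving precisely the symmetric survivor $\iint_{D\times D^c}\tilde\varphi(x)\tilde\varphi(y)K\,dx\,dy$, which establishes~\eqref{EFrel}.

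To conclude~\eqref{uphiE}, I would check that $\tilde v\in\Am$: it is $\sim_m$-periodic by construction, and since the admissibility sign conditions depend only on $\omega\cdot x$ — which is $\sim_m$-invariant because $\Lambda_m\perp\omega$ — and $v$ agrees with the admissible $u$ outside $\Omega\subset\SAB$, the conditions are preserved under periodic extension. The minimality of $u$ in $\Mm$ then yields $\Fm(\tilde v)\geq\Fm(u)$, and substituting into~\eqref{EFrel} delivers~\eqref{uphiE}.
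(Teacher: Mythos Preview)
Your proposal is correct and reaches~\eqref{EFrel} by a genuinely different route from the paper.

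The paper works directly at the quadratic level: for $x\in D$, $y\in D^c$ it writes the pointwise identity
\[
|v(x)-v(y)|^2=|\tilde v(x)-\tilde v(y)|^2+|u(x)-u(y)|^2-|u(x)-u(y)-\tilde\varphi(y)|^2+2\tilde\varphi(x)\tilde\varphi(y),
\]
and then, via the lattice decomposition $D^c=\bigcup_{k\ne 0}(D+k)$ together with the change of variables $(x,y)\mapsto(x-k,y-k)$ and the periodicity of $u,\tilde\varphi,K$, recognizes the integral of $|u(x)-u(y)-\tilde\varphi(y)|^2K$ over $D\times D^c$ as exactly $2\K(v;D^c,D)=2\K(v;D,D^c)$. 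This cancels against the same quantity on the left, leaving the $\tilde\varphi(x)\tilde\varphi(y)$ term.

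Your approach instead polarizes, reducing everything to showing that the two integrals over $D\times D^c$ with \emph{antisymmetric} integrands $(u(x)-u(y))(\tilde\varphi(x)+\tilde\varphi(y))K$ and $(\tilde\varphi(x)^2-\tilde\varphi(y)^2)K$ vanish. You then kill these by pairing the lattice translate $T$ with $-T$: after the substitution $y\mapsto y+T$ and the swap $x\leftrightarrow y$, the algebraic factor flips sign while the combined use of~\eqref{Ksimm} and~\eqref{Kper} gives $K(y+T,x)=K(x-T,y)$, so the two contributions cancel. This is a clean conceptual explanation of \emph{why} the identity holds (antisymmetry plus the involution $T\mapsto -T$ on the lattice), whereas the paper's algebra is slicker but more ad hoc. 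Both arguments rely on the same ingredients (the lattice decomposition of $D^c$, the periodicity of $K$, and the support of $\varphi$ inside a fundamental domain), and both require the same integrability check---handled in the paper by Lemma~\ref{intfinlem}---to justify the rearrangements; in your write-up you should make explicit that the support condition $\Omega\subset\subset D$ forces all the $D\times D^c$ integrals you manipulate to be absolutely convergent, so that the $T\leftrightarrow -T$ pairing is legitimate.
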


Note that the integral written on the right-hand sides of~\eqref{EFrel} and~\eqref{uphiE} is finite, since~$\varphi$ is compactly supported on~$\tSm$ and bounded. For a justification of this fact, see Lemma~\ref{intfinlem} in Appendix~\ref{addcompapp}.

\begin{proof}[Proof of Lemma~\ref{EFrellem}]
For simplicity, we restrict ourselves to consider~$m = (1, \ldots, 1)$, the general case being completely analogous. Moreover, it is enough to prove formula~\eqref{EFrel}, as~\eqref{uphiE} then easily follows by noticing that~$\tilde{v} \in \Am$.

Recalling definition~\eqref{Edef}, we first inspect the term~$\K(v; \tRn, \R^n \setminus \tRn)$. To this aim, let~$x \in \tRn$ and~$y \in \R^n \setminus \tRn$. We compute
\begin{align*}
\left| v(x) - v(y) \right|^2 & = \left| u(x) + \varphi(x) - u(y) \right|^2 \\
& = \left| u(x) + \tilde{\varphi}(x) - u(y) - \tilde{\varphi}(y) \right|^2 + 2 \tilde{\varphi}(y) \left( u(x) + \tilde{\varphi}(x) - u(y) \right) - \tilde{\varphi}(y)^2 \\
& = \left| \tilde{v}(x) - \tilde{v}(y) \right|^2 + \left| u(x) - u(y) \right|^2 - \left| u(x) - u(y) - \tilde{\varphi}(y) \right|^2 + 2 \tilde{\varphi}(x) \tilde{\varphi}(y),
\end{align*}
and thus
\begin{equation} \label{EFtech}
\begin{aligned}
\K(v; \tRn, \R^n \setminus \tRn) & = \K(\tilde{v}, \tRn, \R^n \setminus \tRn) + \K(u; \tRn, \R^n \setminus \tRn) \\
& \quad - \frac{1}{2} \int_{\tRn} \left( \int_{\R^n \setminus \tRn} \left| u(x) - u(y) - \tilde{\varphi}(y) \right|^2 K(x, y) \, dy \right) dx \\
& \quad + \int_{\tRn} \int_{\R^n \setminus \tRn} \tilde{\varphi}(x) \tilde{\varphi}(y) K(x, y) \, dx dy.
\end{aligned}
\end{equation}
Notice now that
$$
\R^n \setminus \tRn = \bigcup_{\substack{k \in \Z^n \setminus \{ 0 \} \\ \omega \cdot k = 0}} \left( \tRn + k \right),
$$
so that we may write the integral on the second line of~\eqref{EFtech} as
$$
\sum_{\substack{k \in \Z^n \setminus \{ 0 \} \\ \omega \cdot k = 0}} \int_{\tRn} \left( \int_{\tRn + k} \left| u(x) - u(y) - \tilde{\varphi }(y) \right|^2 K(x, y) \, dy \right) dx.
$$
By changing variables as~$w := x - k$,~$z := y - k$, recalling~\eqref{Kper} and taking advantage of the periodicity of~$u$ and~$\tilde{\varphi}$, we find that
\begin{align*}
& \int_{\tRn} \left( \int_{\tRn + k} \left| u(x) - u(y) - \tilde{\varphi}(y) \right|^2 K(x, y) \, dy \right) dx \\
& \hspace{80pt} = \int_{\tRn - k} \left( \int_{\tRn} \left| u(w) - u(z) - \tilde{\varphi}(z) \right|^2 K(w, z) \, dz \right) dw \\
& \hspace{80pt} = \int_{\tRn - k} \left( \int_{\tRn} \left| v(w) - v(z) \right|^2 K(w, z) \, dz \right) dw.
\end{align*}
By summing up on~$k$ this identity,~\eqref{EFtech} becomes
\begin{align*}
\K(v; \tRn, \R^n \setminus \tRn) & = \K(\tilde{v}, \tRn, \R^n \setminus \tRn) + \K(u; \tRn, \R^n \setminus \tRn) - \K(v; \R^n \setminus \tRn, \tRn) \\
& \quad + \int_{\tRn} \int_{\R^n \setminus \tRn} \tilde{\varphi}(x) \tilde{\varphi}(y) K(x, y) \, dx dy.
\end{align*}
The thesis then follows by noticing that
$$
\K(v; \tRn, \tRn) = \K(\tilde{v}; \tRn, \tRn) \quad \mbox{and} \quad \P(v; \tRn) = \P(\tilde{v}; \tRn),
$$
and recalling the definitions of~$\E$ and~$\F$.
\end{proof}

With this in hand, we may state the following proposition, where we prove that the absolute minimizers of~$\Fm$ in the class~$\Am$ also minimizes~$\E$ with respect to compact perturbations occurring inside~$\tSm$.

\begin{proposition} \label{cptminprop}
Let~$u \in \Mm$. Then,~$u$ is a local minimizer of~$\E$ in every open set~$\Omega$ compactly contained in~$\tSm$, that is
\begin{equation}\label{TAR89}
\E(u; \Omega) \le \E(v; \Omega),
\end{equation}
for any~$v$ which coincides with~$u$ outside~$\Omega$.
\end{proposition}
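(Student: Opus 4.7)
The plan is to derive Proposition~\ref{cptminprop} from the comparison Lemma~\ref{EFrellem} by splitting a general compact perturbation $v$ of $u$ into its one-sided parts. The reason this is needed is that the \emph{periodization error}
$$\int_{\tRnm}\int_{\R^n\setminus\tRnm}\tilde{\varphi}(x)\tilde{\varphi}(y)K(x,y)\,dx\,dy$$
appearing in~\eqref{uphiE} is not of a definite sign for a general $\varphi=v-u$, so~\eqref{uphiE} does not by itself yield $\E(u;\Omega)\le\E(v;\Omega)$. However, this integral is clearly non-negative whenever $\varphi$ has a constant sign, because $K\ge 0$ by~\eqref{Kbounds} and the $\sim_m$-periodic extension $\tilde{\varphi}$ inherits the sign of $\varphi|_{\tRnm}$. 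The natural remedy is therefore to replace the single comparison $u$ versus $v$ with two comparisons against $\min\{u,v\}$ and $\max\{u,v\}$, and then recombine via Lemma~\ref{uvmMlem}.

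More precisely, one first reduces to the case in which $v$ is bounded and $\E(v;\Omega)<+\infty$: otherwise the inequality is trivial, and truncating $v$ between $\pm\|u\|_{L^\infty(\R^n)}$ does not increase $\E(v;\Omega)$. Set $v_+:=\max\{u,v\}$ and $v_-:=\min\{u,v\}$; both are bounded and agree with $u$ outside $\Omega$. Inserting $v_\pm$ in place of $v$ in~\eqref{uphiE}, the corresponding periodization error is non-negative in each case: for $v_+$ because $v_+-u\ge 0$ implies that its periodic extension is also non-negative, and for $v_-$ because $v_--u\le 0$ produces a product of two non-positive factors. This yields $\E(v_\pm;\tRnm)\ge\E(u;\tRnm)$. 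Since $v_\pm=u$ outside $\Omega$ and $\Omega\subset\tRnm$, the same cancellation argument used in Remark~\ref{restrmk} converts these into
$$\E(v_\pm;\Omega)\ge\E(u;\Omega).$$

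To close the proof, I would invoke Lemma~\ref{uvmMlem} on the kinetic forms $\K(\cdot;\Omega,\Omega)$ and $\K(\cdot;\Omega,\R^n\setminus\Omega)$ and on the potential part $\P(\cdot;\Omega)$, and sum these with the appropriate factor $2$ for the mixed kinetic piece of $\E$ to obtain
$$\E(v_-;\Omega)+\E(v_+;\Omega)\le\E(u;\Omega)+\E(v;\Omega).$$
Combined with the two one-sided bounds above, this gives $2\E(u;\Omega)\le\E(u;\Omega)+\E(v;\Omega)$, which is exactly~\eqref{TAR89}. The main obstacle is the first conceptual step: recognizing that the indefinite periodization error is the essential obstruction, and that it can be circumvented by the $\max/\min$ splitting, which forces a fixed sign on each piece of the perturbation and hence a favorable sign on the tail integrand. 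Everything afterwards is an accounting of $\K$-$\P$ contributions via the already-established Lemmata~\ref{EFrellem} and~\ref{uvmMlem} together with the cancellation observed in Remark~\ref{restrmk}.
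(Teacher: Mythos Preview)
Your proposal is correct and follows essentially the same route as the paper: both arguments reduce to bounded competitors, invoke~\eqref{uphiE} from Lemma~\ref{EFrellem} on the one-sided perturbations $v_\pm=\max\{u,v\}$ and $\min\{u,v\}$ (where the periodization error has a favorable sign), and then recombine via Lemma~\ref{uvmMlem} to conclude. The only cosmetic difference is that the paper carries out both steps on~$\tRnm$ and transfers to~$\Omega$ at the very end via Remark~\ref{restrmk}, whereas you transfer the one-sided inequalities to~$\Omega$ first and apply Lemma~\ref{uvmMlem} directly on~$\Omega$; the two orderings are equivalent.
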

\begin{proof}
First of all, we assume without loss of generality that~$\E(v; \Omega) < +\infty$ and~$|v| \le 1$ a.e. in~$\R^n$. Set~$\varphi := v - u$ and observe that~$\varphi$ is supported on~$\Omega$. We will show that
inequality~\eqref{TAR89}
holds on the larger region~$\tRnm$, in place of~$\Omega$, i.e.
\begin{equation} \label{ulocminE}
\E(u; \tRnm) \le \E(v; \tRnm).
\end{equation}
This will imply~\eqref{TAR89}, in light of Remark~\ref{restrmk}.

To prove~\eqref{ulocminE}, we first
notice that if~$\varphi$ is either non-negative or non-positive, then~\eqref{ulocminE} follows as a direct consequence of inequality~\eqref{uphiE}. On the other hand, if~$\varphi$ is sign-changing, we consider the minimum and the maximum between~$u$ and~$u + \varphi$. Recalling Lemma~\ref{uvmMlem} it is immediate to see that
$$
\E(\min \{ u, u + \varphi \}; \tRnm) + \E(\max \{ u, u + \varphi \}; \tRnm) \le \E(u; \tRnm) + \E(u + \varphi; \tRnm).
$$
Moreover, since it holds
$$
\min \{ u, u + \varphi \} = u - \varphi_- \quad \mbox{and} \quad \max \{ u, u + \varphi \} = u + \varphi_+,
$$
we may apply~\eqref{uphiE} and get
\begin{align*}
2 \, \E(u; \tRnm) & \le \E(u - \varphi_-; \tRnm) + \E(u + \varphi_+; \tRnm) \\
& = \E(\min \{ u, u + \varphi \}; \tRnm) + \E(\max \{ u, u + \varphi \}; \tRnm) \\
& \le \E(u; \tRnm) + \E(u + \varphi; \tRnm).
\end{align*}
This leads to~\eqref{ulocminE}.
\end{proof}

From this proposition and the results of Subsection~\ref{doubsubsec}, we immediately deduce the following

\begin{corollary} \label{minmincptcor}
The minimal minimizer~$\uAB$ is a local minimizer of~$\E$ in every bounded open set~$\Omega$ compactly contained in the strip~$\SAB$.
\end{corollary}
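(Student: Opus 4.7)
The corollary follows by combining the doubling property (Proposition~\ref{u=umprop}) with the compact-perturbation minimality (Proposition~\ref{cptminprop}). The strategy is to observe that $\uAB$ is an absolute minimizer in the $\sim_m$-periodic class for \emph{every} $m \in \N^{n-1}$, so that it suffices to choose $m$ large enough to accommodate the given bounded~$\Omega$ inside a fundamental domain of~$\tSm$.

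In detail: fix a bounded open set $\Omega \subset\subset \SAB$ and a function~$v$ coinciding with~$\uAB$ outside~$\Omega$. By Proposition~\ref{u=umprop}, $\uAB = \um$ for every~$m \in \N^{n-1}$, and in particular $\uAB \in \Mm$. I would then choose $m$ so that $\Omega$ fits compactly into a fundamental domain of $\tSm$, in the strict sense of the footnote to Lemma~\ref{EFrellem}. Concretely, writing a fundamental domain as the ``slanted slab'' $p_0 + \bigl\{ \sum_{i=1}^{n-1} t_i z_i : t_i \in [0, m_i) \bigr\} \cap \SAB$, the projection of~$\bar\Omega$ onto the coordinates $(t_1, \ldots, t_{n-1})$ is a compact subset of~$\R^{n-1}$; by an appropriate choice of the base point~$p_0$ and of~$m_i$ sufficiently large, this projection lies in the open box~$(0, m_1) \times \cdots \times (0, m_{n-1})$ with positive slack. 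The compact separation in the $\omega$-direction is automatic from $\Omega \subset\subset \SAB$, since $\bar\Omega$ is already separated from the hyperplanes $\{ \omega \cdot x = A \}$ and $\{ \omega \cdot x = B \}$.

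Once such an~$m$ is fixed, Proposition~\ref{cptminprop} applied to $\um = \uAB$ yields $\E(\uAB; \Omega) \le \E(v; \Omega)$, which is precisely the claimed local minimality. The main (and essentially only) non-routine step is the geometric argument for the choice of~$m$, which requires some care because the lattice vectors $z_1, \ldots, z_{n-1}$ need not be orthogonal; everything else is a direct invocation of the two propositions established in Subsections~\ref{doubsubsec} and~\ref{cptsubsec}.
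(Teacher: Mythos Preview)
Your proposal is correct and follows exactly the same route as the paper: choose~$m$ large so that~$\Omega \subset\subset \tSm$, invoke Proposition~\ref{u=umprop} to identify~$\uAB$ with~$\um \in \Mm$, and then apply Proposition~\ref{cptminprop}. The paper's own proof is a three-line version of yours; your additional care about the geometric choice of~$m$ and the non-orthogonality of the~$z_i$ is reasonable but not explicitly spelled out there.
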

\begin{proof}
Given~$\Omega$, we take~$m \in \N^{n - 1}$ large enough in order to have~$\Omega \subset \subset \tSm$. In view of Proposition~\ref{u=umprop},~$\uAB$ is the minimal minimizer with respect to~$\Mm$. But then, by Proposition~\ref{cptminprop},~$\uAB$ is a local minimizer of~$\E$ in~$\Omega$.
\end{proof}

\subsection{The Birkhoff property} \label{birksubsec}

In this subsection we
introduce an interesting geometric feature shared by the level sets of the minimal minimizer: the~\emph{Birkhoff property}
(also known in the literature as ``non-self-intersection property''). Namely,
the level sets of the minimal minimizers are ordered under translations.

\medskip

In order to give a formal definition of this property, the following notation will be useful.

Similarly to what we did in~\eqref{functrnasl} for functions, we consider the translation of a set~$E \subseteq \R^n$ with respect to a vector~$z \in \R^n$
\begin{equation} \label{settransl}
\tau_z E := E + z = \left\{ x + z : x \in E \right\}.
\end{equation}
Notice that, with this notation, the translation of a sublevel set then is given by
\begin{equation} \label{sublevtransl}
\tau_z \{ u < \theta \} = \{ \tau_z u < \theta \},
\end{equation}
and analogously for the superlevel sets.

\begin{definition}
Let~$E$ be a subset of~$\R^n$. We say that~$E$ has the~\emph{Birkhoff property} with respect to a vector~$\varpi \in \R^n$ if:
\begin{enumerate}[$\bullet$]
\item $\tau_k E \subseteq E$, for any~$k \in \Z^n$ such that~$\varpi \cdot k \le 0$, and
\item $\tau_k E \supseteq E$, for any~$k \in \Z^n$ such that~$\varpi \cdot k \ge 0$.
\end{enumerate}
\end{definition}

Before exploring the connection between the minimal minimizer and the Birkhoff property, we present a proposition which addresses Birkhoff sets from an abstract point of view and displays a rigidity feature of those of such sets that have \emph{fat} interior.

\begin{proposition} \label{fatbirk}
Let~$E \subseteq \R^n$ be a set satisfying the Birkhoff property with respect to a vector~$\varpi \in \R^n \setminus \{ 0 \}$. If~$E$ contains a ball of radius~$\sqrt{n}$, then it also contains a half-space which includes the center of the ball, has delimiting hyperplane orthogonal to~$\varpi$ and is such that~$\varpi$ points outside of it.
\end{proposition}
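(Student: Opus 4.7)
Denote by $x_0$ the center of the ball $B_{\sqrt{n}}(x_0) \subseteq E$, and let $H := \{x \in \R^n : \varpi \cdot (x - x_0) \le 0\}$ be the closed half-space containing $x_0$, delimited by the hyperplane through $x_0$ orthogonal to $\varpi$, on the side from which $\varpi$ points outside. The plan is to show that $H \subseteq E$. Note that, combining the two bullet points of the Birkhoff property (by replacing $k$ with $-k$ in the second one), both amount to the single statement
\begin{equation*}
\tau_k E \subseteq E \quad \mbox{for every } k \in \Z^n \mbox{ with } \varpi \cdot k \le 0,
\end{equation*}
so my task reduces to finding, for each $y \in H$, an integer vector $k$ with $\varpi \cdot k \le 0$ such that $y - k \in B_{\sqrt{n}}(x_0)$; indeed this will give $y = (y - k) + k \in E + k = \tau_k E \subseteq E$.

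The heart of the argument is a sharp lattice-rounding observation. Fix $y \in H$ and set $v := y - x_0$, so that $\varpi \cdot v \le 0$. I would then define $k \in \Z^n$ componentwise by rounding \emph{away from the direction of $\varpi$}: precisely,
\begin{equation*}
k_i := \begin{cases} \lfloor v_i \rfloor & \mbox{if } \varpi_i \ge 0, \\ \lceil v_i \rceil & \mbox{if } \varpi_i < 0. \end{cases}
\end{equation*}
With this choice, each product $(v_i - k_i)\varpi_i$ is non-negative (both factors have the same sign, up to one of them being zero), hence $\varpi \cdot (v - k) \ge 0$ and consequently $\varpi \cdot k \le \varpi \cdot v \le 0$, as required.

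It remains to verify that $y - k$ lies in $B_{\sqrt{n}}(x_0)$. By construction, for every $i$ one has $v_i - k_i \in [0, 1)$ or $v_i - k_i \in (-1, 0]$, so $(v_i - k_i)^2 < 1$ for each $i$. Summing over $i$ gives $|v - k|^2 < n$, and therefore
\begin{equation*}
|y - k - x_0| = |v - k| < \sqrt{n},
\end{equation*}
which places $y - k$ inside $B_{\sqrt{n}}(x_0) \subseteq E$. Combined with $\varpi \cdot k \le 0$ and the Birkhoff property, this yields $y \in E$, concluding the proof.

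I do not expect a genuine obstacle in this argument: the whole point is that $\sqrt{n}$ is exactly the diameter of the closed unit cube in $\R^n$, which is precisely the threshold that makes the directional rounding above produce an integer vector within distance $\sqrt{n}$ of any prescribed point. The only mild subtlety is to be careful that the chosen rounding keeps $\varpi \cdot k$ non-positive; the componentwise prescription in terms of the signs of the $\varpi_i$'s is exactly what handles this, and the strict inequality $|v - k| < \sqrt{n}$ ensures that the translated point lies in the \emph{open} ball, which is all we actually need.
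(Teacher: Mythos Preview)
Your proof is correct and follows essentially the same approach as the paper: both argue that the union $\bigcup_{k \in \Z^n,\, \varpi \cdot k \le 0} \tau_k B_{\sqrt{n}}(x_0)$ covers the desired half-space and is contained in $E$ by the Birkhoff property. The paper simply asserts this covering without justification, whereas your explicit componentwise rounding (floor or ceiling according to the sign of $\varpi_i$) supplies the detail the paper omits.
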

\begin{proof}
Let~$B_{\sqrt{n}}(x_0)$ be the ball of radius~$\sqrt{n}$ and center~$x_0$ contained in~$E$. By the Birkhoff property, it holds
$$
\bigcup_{\substack{k \in \Z^n \\ \varpi \cdot k \le 0}} \tau_k B_{\sqrt{n}}(x_0) \subseteq \bigcup_{\substack{k \in \Z^n \\ \varpi \cdot k \le 0}} \tau_k E \subseteq E.
$$
The thesis now follows by observing that the set on the left-hand side above contains the half-space~$\{ \varpi \cdot (x - x_0) < \varepsilon \}$, for some~$\varepsilon > 0$.
\end{proof}

Now we show that the level sets of the minimal minimizer are Birkhoff sets. Recalling the relation between translations and level sets established in~\eqref{sublevtransl}, we have

\begin{proposition} \label{ubirk}
Let~$\theta \in \R$. Then, the superlevel set~$\left\{ \uAB > \theta \right\}$ has the Birkhoff property with respect to~$\omega$. Explicitly,
\begin{enumerate}[$\bullet$]
\item $\left\{ \tau_k \uAB > \theta \right\} \subseteq \left\{ \uAB > \theta \right\}$, for any~$k \in \Z^n$ such that~$\omega \cdot k \le 0$, and
\item $\left\{ \tau_k \uAB > \theta \right\} \supseteq \left\{ \uAB > \theta \right\}$, for any~$k \in \Z^n$ such that~$\omega \cdot k \ge 0$.
\end{enumerate}
Analogously, the sublevel set~$\{ \uAB < \theta \}$ has the Birkhoff property with respect to~$-\omega$.\\
The same statements still hold if we replace strict level sets with broad ones.
\end{proposition}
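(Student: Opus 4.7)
My plan is to reduce the Birkhoff property for level sets to a pointwise comparison between~$\uAB$ and its integer translates~$\tau_k \uAB$. Specifically, I want to show that
\begin{equation*}
\uAB \le \tau_k \uAB \quad \text{for every } k \in \Z^n \text{ with } \omega \cdot k \ge 0,
\end{equation*}
as the opposite inequality for~$\omega \cdot k \le 0$ then follows by replacing~$k$ with~$-k$ and translating, and the stated set inclusions (both strict and broad) are immediate consequences of such pointwise orderings.

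The first step is to observe that, thanks to the periodicity assumptions~\eqref{Kper} and~\eqref{Wper}, the translation~$\tau_k \uAB$ remains~$\sim_\omega$-periodic for~$k \in \Z^n$ and, by a change of variables, is itself an absolute minimizer of~$\F$ in the shifted admissible class~$\mathcal{A}_\omega^{A + \omega \cdot k, B + \omega \cdot k}$; that is,~$\tau_k \uAB \in \mathcal{M}_\omega^{A + \omega \cdot k, B + \omega \cdot k}$. When~$\omega \cdot k \ge 0$ we have~$A \le A + \omega \cdot k$ and~$B \le B + \omega \cdot k$, so Lemma~\ref{min2lem} applies with~$u = \uAB$ and~$v = \tau_k \uAB$, yielding
\begin{equation*}
\min\{ \uAB, \tau_k \uAB \} \in \MAB.
\end{equation*}
By the defining minimality property~\eqref{minminprop1} of the minimal minimizer, this forces~$\uAB \le \min\{\uAB, \tau_k \uAB\}$, i.e.~$\uAB \le \tau_k \uAB$ a.e.\ in~$\R^n$.

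For~$k \in \Z^n$ with~$\omega \cdot k \le 0$, I apply the previous conclusion to~$-k$ (which satisfies~$\omega \cdot (-k) \ge 0$), obtaining~$\uAB \le \tau_{-k} \uAB$; composing both sides with the translation~$\tau_k$ produces~$\tau_k \uAB \le \uAB$. Translating these two pointwise comparisons into the language of level sets via~\eqref{sublevtransl}, the superlevel set~$\{\uAB > \theta\}$ satisfies the two inclusions in the statement, and the analogous claim for~$\{\uAB < \theta\}$ with respect to~$-\omega$ is the same inequality read from the other side. The argument for broad level sets is identical, since the pointwise bounds on~$\uAB$ and~$\tau_k \uAB$ trivially preserve both strict and non-strict inequalities.

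There is no real obstruction here: the only delicate point is verifying that the hypotheses of Lemma~\ref{min2lem} are satisfied, which amounts to checking that the shifted parameters~$A + \omega \cdot k$ and~$B + \omega \cdot k$ lie on the correct side of~$A$ and~$B$ according to the sign of~$\omega \cdot k$, and that~$\tau_k \uAB$ is indeed an absolute minimizer of~$\F$ in the translated class (an immediate consequence of the $\Z^n$-periodicity encoded in~\eqref{Kper} and~\eqref{Wper}). Once this is in place, the rigidity of~$\uAB$ as the pointwise infimum of~$\MAB$ does the rest of the work.
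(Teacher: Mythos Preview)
Your proof is correct and follows essentially the same route as the paper: both use Lemma~\ref{min2lem} together with the defining minimality~\eqref{minminprop1} of~$\uAB$ to produce the pointwise comparison~$\uAB \le \tau_k \uAB$ (for~$\omega\cdot k \ge 0$), from which all the level-set inclusions follow. Your organization is in fact slightly tidier, since you treat only one sign of~$\omega\cdot k$ directly and obtain the other by the substitution~$k\mapsto -k$, whereas the paper argues both cases separately (invoking that~$\tau_k\uAB$ is itself the minimal minimizer of the shifted class) and then handles the strict/broad distinction via a union argument.
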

\begin{proof}
Let~$v := \min \{ \uAB, \tau_k \uAB \}$ and observe that~$\tau_k \uAB$ is the minimal minimizer with respect to the strip~$\tau_k \SAB = \mathcal{S}_\omega^{A + \omega \cdot k, B + \omega \cdot k}$. If~$\omega \cdot k \le 0$ then by Lemma~\ref{min2lem} it follows that~$v \in \mathcal{M}_\omega^{A + \omega \cdot k, B + \omega \cdot k}$. Thus,~$\tau_k \uAB \le v \le \uAB$ and hence
$$
\left\{ \tau_k \uAB > \theta \right\} \subseteq \left\{ \uAB > \theta \right\}.
$$
On the other hand, if~$\omega \cdot k \ge 0$ then~$v \in \MAB$ and therefore
$$
\left\{ \uAB < \theta \right\} \subseteq \left\{ \tau_k \uAB < \theta \right\}.
$$

The conclusion for the sublevel set~$\{ \uAB \le \theta \}$ follows observing that a set~$E \subseteq \R^n$ is Birkhoff with respect to a vector~$\varpi \in \R^n$ if and only if~$\R^n \setminus E$ is Birkhoff with respect to~$- \varpi$.

Finally, by writing
$$
\left\{ \uAB < \theta \right\} = \bigcup_{k \in \N} \left\{ \uAB \le \theta - 1 / k \right\},
$$
and noticing that the union of a family of sets that are Birkhoff with respect to a mutual vector is itself Birkhoff with respect to the same vector, we deduce that~$\{ \uAB < \theta \}$ has the Birkhoff property with respect to~$- \omega$. In a similar way one checks that the superlevel set~$\{ \uAB \ge \theta \}$ is Birkhoff with respect to~$\omega$.
\end{proof}

\subsection{Unconstrained and class~A minimization} \label{unconstrsubsec}

From now on we mainly restrict our attention to strips of the form
$$
\S := \mathcal{S}_\omega^{0, M} = \left\{ x \in \R^n : \omega \cdot x \in [0, M] \right\}.
$$
We simply write~$\A$ for the space~$\mathcal{A}_\omega^{0, M}$ of admissible functions,~$\M$ for the absolute minimizers and~$\u$ for the minimal minimizer. We also assume~$M > 10 |\omega|$, in order to avoid degeneracies caused by too narrow strips.

The main purpose of this subsection is to show that the minimal minimizer~$\u$ becomes unconstrained for large, universal values of~$M / |\omega|$. By \emph{unconstrained} we mean that~$\u$ no longer \emph{feels} the boundary data prescribed outside the strip~$\S$ and gains additional minimizing properties in the whole space~$\R^n$. Of course, we will be more precise on this later in Proposition~\ref{unconstrprop}.

\medskip

We begin by adapting the results of Sections~\ref{regsec} and~\ref{enestsec} to the minimal minimizer~$\u$. Recall that~$\u$ is a local minimizer for~$\E$ inside the strip~$\S$, thanks to Corollary~\ref{minmincptcor}.

In view of Corollary~\ref{minminholdercor}, we deduce that there exist universal quantities~$\alpha \in (0, 1)$ and~$C_1 \ge 1$ for which
\begin{equation} \label{holderunipre}
\| \u \|_{C^{0, \alpha}(S)} \le C_1,
\end{equation}
for any open set~$S \subset \subset \S$ such that~$\dist \left(S, \partial \S \right) \ge 1$.

On the other hand, Proposition~\ref{enestprop} tells that, given~$x_0 \in \S$ and~$R \ge 3$ in such a way that~$B_{R + 2}(x_0) \subset \subset \S$, it holds
\begin{equation} \label{minminenest}
\E(\u; B_R(x_0)) \le C_2 R^{n - 1} \Psi_s(R),
\end{equation}
for a universal constant~$C_2 > 0$. Recall that~$\Psi_s(R)$ was defined in~\eqref{Psidef}.

\medskip

Now that~\eqref{holderunipre} and~\eqref{minminenest} are established, we may proceed to the core proposition of the present subsection.

\begin{proposition} \label{dist1prop}
There exists a universal~$M_0 > 0$ such that if~$M \ge M_0 |\omega|$, then the superlevel set~$\{ \u > - 9 / 10 \}$ is at least at distance~$1$ from the upper constraint~$\{ \omega \cdot x = M \}$ delimiting~$\S$.
\end{proposition}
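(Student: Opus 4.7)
The proof is by contradiction. Suppose there is a point $x_0 \in \S$ with $\u(x_0) > -9/10$ and $\omega \cdot x_0 > M - |\omega|$, so that $x_0$ lies at Euclidean distance strictly less than $1$ from the hyperplane $\{\omega \cdot x = M\}$. The plan is to exploit the Birkhoff structure established in Proposition~\ref{ubirk} to propagate this information onto a large downward lattice, promote the lattice bounds to volumetric interface estimates via the interior H\"older regularity~\eqref{holderunipre}, and finally contradict the energy estimate~\eqref{minminenest}.

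First, by Proposition~\ref{ubirk}, the superlevel set $\{\u > -9/10\}$ is preserved under integer translations $+k$ with $\omega \cdot k \le 0$, hence $\u(x_0 + k) > -9/10$ for every such $k$. Fix $x_1 \in \S$ with $\omega \cdot x_1 = M/2$ and let $R := \lfloor M/(2 |\omega|) \rfloor - 3$, so that $B_{R+2}(x_1) \subset \subset \S$ once $M/|\omega|$ exceeds a universal constant. The ball $B_R(x_1)$ then contains roughly $\alpha_n R^n$ integer translates $y = x_0 + k$; since $\omega \cdot y \le M - 3|\omega| < \omega \cdot x_0$ forces $\omega \cdot k \le 0$ automatically, we have $\u(y) > -9/10$ at all of them.

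Next, the interior H\"older bound~\eqref{holderunipre} extends each pointwise inequality $\u(y) > -9/10$ to $\u > -19/20$ on a universal-radius ball $B_{r_0}(y)$. To upgrade this to genuine interface information ($|\u| \le 9/10$, which yields $W(\cdot, \u) \ge \gamma(19/20)$ by~\eqref{Wgamma}), I would combine the downward closure of $\{\u > -9/10\}$ with the upward closure of $\{\u < 9/10\}$, both from Proposition~\ref{ubirk}, and use the continuity of $\u$ along vertical segments from each $y$ down to the lower constraint $\{\omega \cdot x \le 0\}$ where $\u \ge 9/10$. The intermediate value theorem then locates, near every $y$, a point $z$ with $|\u(z)| \le 9/10$; H\"older continuity produces a universal-sized interface ball around $z$ on which $W(\cdot, \u)$ is bounded below. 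Gathering the resulting disjoint balls yields a lower bound
\begin{equation*}
\P(\u; B_R(x_1)) \ge c_0 R^n,
\end{equation*}
for a universal constant $c_0 > 0$.

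Finally, the energy estimate~\eqref{minminenest} reads $\E(\u; B_R(x_1)) \le C_2 R^{n-1} \Psi_s(R)$, and combining both bounds gives $R \le (C_2/c_0)\,\Psi_s(R)$. Since $\Psi_s(R)/R \to 0$ as $R \to \infty$ in each of the regimes $s \in (0, 1/2)$, $s = 1/2$, $s \in (1/2, 1)$, this inequality fails once $R$ exceeds a universal threshold $R_0$; setting $M_0 := 2(R_0 + 3)$ forces $R \ge R_0$ whenever $M \ge M_0 |\omega|$, producing the desired contradiction. The main obstacle is the interface extraction step: the Birkhoff inequality $\u(y) > -9/10$ alone does not rule out $\u(y)$ being close to $+1$ at a sizable fraction of the lattice points, in which case the potential contribution there vanishes. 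Overcoming this asymmetry requires a careful combined use of both Birkhoff statements and interior continuity to trap the transition of $\u$ between the two phases and to force interface crossings at a positive-density subfamily of the lattice.
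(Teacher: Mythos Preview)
Your argument has a genuine gap at the interface extraction step, and the fix you sketch does not close it. From the contradiction hypothesis you only obtain~$\u(y)>-9/10$ on a lattice of translates~$y=x_0+k$ with~$\omega\cdot k\le 0$; this is perfectly compatible with~$\u$ being close to~$+1$ throughout~$B_R(x_1)$ and making its transition to~$-9/10$ entirely inside the thin slab~$\{M-|\omega|<\omega\cdot x<M\}$. Your intermediate value argument goes in the wrong direction (toward the lower constraint, where~$\u\ge 9/10$, so no crossing is forced), and even if you run it toward the upper constraint, the crossing points~$z$ along the long segments of length~$\sim R$ may all cluster in that thin slab. Since the slab meets~$B_R(x_1)$ in a set of volume~$\sim R^{n-1}$, you recover only~$\P(\u;B_R(x_1))\gtrsim R^{n-1}$, which does not contradict~\eqref{minminenest}. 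The Birkhoff property of~$\{\u<9/10\}$ does not help either: it propagates that sublevel set in the direction of increasing~$\omega\cdot x$, not toward~$B_R(x_1)$.

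The paper's proof avoids this by inverting the logic. It does not start from a bad point near the top; instead it first shows, by the same energy--potential comparison, that \emph{some} ball~$B_{\sqrt{n}}(\bar z)\subset\subset\S$ is pure phase (either~$\u\ge 9/10$ or~$\u\le -9/10$ on it). The contradiction hypothesis there is ``every ball of radius~$\sqrt{n}$ contains an interface point'', which is genuinely volumetric and yields~$\P\gtrsim k^n$ directly. The case~$\u\ge 9/10$ on~$B_{\sqrt{n}}(\bar z)$ is then ruled out not by Birkhoff alone but by the \emph{minimal minimizer} property: one shows~$\tau_{-e_1}\u\in\M$, hence~$\u\le\tau_{-e_1}\u$, and iterating forces~$\u\ge 9/10$ everywhere, which is absurd. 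This use of minimality (beyond what is encoded in Proposition~\ref{ubirk}) is the ingredient your proposal is missing.
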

\begin{proof}
In the course of this proof we will often indicate balls and cubes without any explicit mention of their center. Thus,~$B$ will be for instance used to denote a ball not necessarily centered at the origin, in contrast with the notation adopted in the rest of the paper.

We claim that
\begin{equation} \label{claimuni}
\begin{aligned}
& \mbox{there exists a universal constant } M_0 \ge 8 n \mbox{ such that, for any } M \ge M_0 |\omega|, \\
& \mbox{we can find a ball } B_{\sqrt{n}}(\bar{z}) \subset \subset \S, \mbox{ for some } \bar{z} \in \S, \mbox{ on which} \\
& \mbox{either } \u \ge 9 / 10 \mbox{ or } \u \le - 9 / 10.
\end{aligned}
\end{equation}

Let~$M \ge 8 n |\omega|$ be given and suppose that for any ball~$\tilde{B}$ of radius~$\sqrt{n}$ compactly contained in~$\S$, there exists a point~$\tilde{x} \in \tilde{B}$ such that~$|\u(\tilde{x})| < 9 / 10$. If we show that~$M / |\omega|$ is less or equal to a universal value~$M_0$, claim~\eqref{claimuni} would then be true.

Let~$k \ge 2$ be the only integer for which
\begin{equation} \label{relMk}
k \le \frac{M}{4 n |\omega|} < k + 1.
\end{equation}
Take a point~$x_0 \in \S$ lying on the hyperplane~$\{ \omega \cdot x = M/ 2 \}$ and consider the ball~$B = B_{n k}(x_0)$. By~\eqref{relMk}, we have that~$B \subset \subset \S$, with
\begin{equation} \label{distge2}
\dist \left( B, \partial \S \right) = \frac{M}{2 |\omega|} - n k \ge n k \ge 4.
\end{equation}
Consequently, we may apply the bound in~\eqref{holderunipre} to deduce that
\begin{equation} \label{holderuni}
\| \u \|_{C^{0, \alpha}(B)} \le C_1.
\end{equation}

Let now~$Q$ be a cube of sides~$2 \sqrt{n} k$, centered at~$x_0$. Of course,~$Q \subset B$. It is easy to see that~$Q$ may be partitioned (up to a negligible set) into a collection~$\{ Q_j \}_{j = 1}^{k^n}$ of cubes with sides of length~$2 \sqrt{n}$, parallel to those of~$Q$. Moreover, we denote with~$B_j \subset Q_j$ the ball of radius~$\sqrt{n}$ having the same center of~$Q_j$. See Figure~\ref{cubepart}.

\begin{figure}[h]
\centering
\fbox{\includegraphics[width=0.9\textwidth]{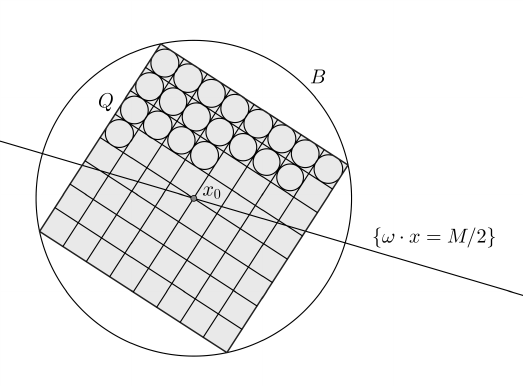}}
\caption{The partition of the cube~$Q$ into the subcubes~$Q_j$'s and the concentric balls~$B_j$'s.}
\label{cubepart}
\end{figure}

In view of our starting assumption, for any~$j = 1, \ldots, k^n$ there exists a point~$\tilde{x}_j \in B_j$ at which~$|\u(\tilde{x}_j)| < 9 / 10$. We claim that
\begin{equation} \label{99/100}
|\u| < 99 / 100 \quad \mbox{in } B_{r_0}(\tilde{x}_j),
\end{equation}
for some universal radius~$r_0 \in (0, 1)$. Indeed, setting~$r_0 := \left( 9 / (100 C_1) \right)^{1 / \alpha}$, by~\eqref{holderuni} we get
$$
|\u(x)| \le |\u(\tilde{x}_j)| + C_1 |x - \tilde{x}_j|^\alpha < \frac{9}{10} + C_1 r_0^\alpha = \frac{99}{100},
$$
for any~$x \in B_{r_0}(\tilde{x}_j)$. Hence,~\eqref{99/100} is established. Furthermore, since~$\tilde{x}_j \in B_j \subset Q_j$, we have
\begin{equation} \label{fraction}
\left| B_{r_0}(\tilde{x}_j) \cap Q_j \right| \ge \frac{1}{2^n} \left| B_{r_0}(\tilde{x}_j) \right| = \frac{\alpha_n}{2^n} r_0^n.
\end{equation}

By combining~\eqref{99/100} and~\eqref{fraction}, recalling~\eqref{Wgamma} we compute
\begin{align*}
\P \left( \u; B \right) & \ge \P \left( \u; Q \right) = \sum_{j = 1}^{k^n} \P \left( \u; Q_j \right) \\
& \ge \sum_{j = 1}^{k^n} \P \left( \u; B_{r_0}(\tilde{x}_j) \cap Q_j \right) = \sum_{j = 1}^{k^n} \int_{B_{r_0}(\tilde{x}_j) \cap Q_j} W \left( x, \u(x) \right) \, dx \\
& \ge \gamma \left( \frac{99}{100} \right) \sum_{j = 1}^{k^n} |B_{r_0}(\tilde{x}_j) \cap Q_j| \ge \frac{\alpha_n}{2^n} r_0^n \gamma \left( \frac{99}{100} \right) k^n \\
& =: C_3 k^n,
\end{align*}
with~$C_3 > 0$ universal. On the other hand,~\eqref{minminenest} implies that
$$
\P(\u; B) \le \E(\u; B) \le C_2 \left( n k \right)^{n - 1} \Psi_s \left( n k \right) \le C_4 k^{n - 1} \Psi_s(k),
$$
for some universal~$C_4 > 0$. Note that the energy estimate~\eqref{minminenest} may be applied to the ball~$B$ thanks to~\eqref{distge2}. Comparing the last two inequalities and recalling~\eqref{Psidef}, we find out that~$k$ cannot be greater than a universal constant. By~\eqref{relMk}, the same holds true for the quotient~$M / |\omega|$ and hence~\eqref{claimuni} follows.

Now, we want to rule out the possibility of~$\u$ being greater or equal to~$9 / 10$ on~$B_{\sqrt{n}}(\bar{z})$, thus showing that~$\u \le - 9 / 10$ in~$B_{\sqrt{n}}(\bar{z})$. By contradiction, assume that
\begin{equation} \label{contrhyp}
\u \ge 9 / 10 \quad \mbox{in } B_{\sqrt{n}}(\bar{z}).
\end{equation}
In view of Proposition~\ref{ubirk} the set~$\left\{ \u \ge 9 / 10 \right\}$ has the Birkhoff property with respect to~$\omega$. Hence, thanks to~\eqref{contrhyp} and Proposition~\ref{fatbirk}, this superlevel set contains the half-space~$\Pi_- := \{ \omega \cdot (x - \bar{z}) < 0 \}$. Since~$B_{\sqrt{n}}(\bar{z}) \subset \S$, we then deduce that the distance of~$\partial \Pi_-$ from the lower constraint~$\{ \omega \cdot x = 0 \}$ is at least~$1$. Accordingly, if we assume without loss of generality that~$\omega_1 > 0$, then the translation~$\tau_{- e_1} \u$ belongs to~$\A$ (recall definition~\eqref{functrnasl}). But then, the periodicity assumptions~\eqref{Kper}-\eqref{Wper} imply that~$\F(\tau_{- e_1} \u) = \F(\u)$ and thus~$\tau_{- e_1} \u \in \M$. Being~$\u$ the minimal minimizer, we conclude that
$$
\u(x + e_1) = \tau_{- e_1} \u(x) \ge \u(x) \quad \mbox{for a.e. } x \in \R^n. 
$$
By iterating this inequality we then find that
$$
\u(x + \ell e_1) \ge \u(x) \ge \frac{9}{10} \quad \mbox{for a.e. } x \in \Pi_- \mbox{ and any } \ell \in \N,
$$
i.e.,~$\u \ge 9 / 10$ a.e. in~$\R^n$, in contradiction with the fact that, by construction,~$\u \le - 9 / 10$ in~$\{ \omega \cdot x \ge M \}$.

As a result,~$\u \le - 9 / 10$ on the ball~$B_{\sqrt{n}}(\bar{z})$. The proof then finishes by applying once again Propositions~\ref{ubirk} and~\ref{fatbirk} to the sublevel set~$\left\{ \u \le - 9 / 10 \right\}$.
\end{proof}

\begin{corollary} \label{upperunconst}
If~$M \ge M_0 |\omega|$, then~$\u = u_\omega^{M + a}$, for any~$a \ge 0$.
\end{corollary}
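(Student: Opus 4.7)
The plan is to use the margin provided by Proposition~\ref{dist1prop} to show that, when $a$ is small, both $\u$ and $u_\omega^{M+a}$ are admissible and energy-optimal for the two classes $\A$ and $\mathcal{A}_\omega^{0,M+a}$ simultaneously; the general case will then follow by iteration.

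First consider $a \in [0,|\omega|]$. Since $M+a \ge M \ge M_0|\omega|$, Proposition~\ref{dist1prop} applied to $u_\omega^{M+a}$ yields
$$
u_\omega^{M+a}(x) \le -\tfrac{9}{10} \quad \mbox{whenever } \omega \cdot x \ge M+a-|\omega|.
$$
Because $M+a-|\omega| \le M$, this bound is in force on the whole region $\{\omega\cdot x \ge M\}$, so $u_\omega^{M+a}$ is admissible for the narrower class $\A$. Conversely, the inclusion $\{\omega\cdot x \ge M+a\} \subseteq \{\omega\cdot x \ge M\}$ makes $\u$ trivially admissible for $\mathcal{A}_\omega^{0,M+a}$. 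Testing the minimality of each function against the other in its own class forces $\F(\u) = \F(u_\omega^{M+a})$, and hence both functions are absolute minimizers of $\F$ in \emph{both} classes. The minimal-minimizer property of Definition~\ref{minmindef}, invoked inside each class in turn, then produces simultaneously $\u \le u_\omega^{M+a}$ and $u_\omega^{M+a} \le \u$, that is, $\u = u_\omega^{M+a}$.

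The general case is handled by iteration. Writing $a = N|\omega| + r$ with $N \in \N \cup \{0\}$ and $r \in [0,|\omega|)$, the small-enlargement step can be applied successively to strips of widths $M, M+|\omega|, \ldots, M+N|\omega|$, each of which still exceeds $M_0|\omega|$. Chaining the resulting equalities produces
$$
\u \;=\; u_\omega^{M+|\omega|} \;=\; \cdots \;=\; u_\omega^{M+N|\omega|} \;=\; u_\omega^{M+N|\omega|+r} \;=\; u_\omega^{M+a},
$$
which finishes the argument. No serious obstacle is anticipated; once the quantitative margin from Proposition~\ref{dist1prop} is in hand, the rest of the proof is a direct manipulation of the definition of minimal minimizer.
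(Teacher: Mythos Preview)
Your argument is correct and follows essentially the same route as the paper: use Proposition~\ref{dist1prop} on the wider strip to bring~$u_\omega^{M+a}$ into the smaller admissible class, compare energies both ways, and invoke the minimal-minimizer property in each class to force equality, then iterate. Your choice of step size~$a\in[0,|\omega|]$ is in fact the more careful reading of ``distance~$1$'' in Proposition~\ref{dist1prop}, since Euclidean distance~$1$ from the hyperplane~$\{\omega\cdot x=M+a\}$ corresponds to~$\omega\cdot x\le M+a-|\omega|$; the paper iterates with~$a\in[0,1]$, which amounts to the same conclusion.
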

\begin{proof}
Fix~$M \ge M_0 |\omega|$ and~$a \in [0, 1]$. By applying Proposition~\ref{dist1prop} to the minimal minimizer~$u_\omega^{M + a}$, we find that~$u_\omega^{M + a} \le - 9 / 10$ a.e. in the half-space~$\{ \omega \cdot x \ge M \}$. Hence,~$u_\omega^{M + a} \in \A$ and~$\F(\u) \le \F(u_\omega^{M + a})$, by the minimization properties of~$\u$. On the other hand, clearly~$\u \in \mathcal{A}_\omega^{M + a}$, so that we also have~$\F(u_\omega^{M + a}) \le \F(\u)$. Thus, both~$\u$ and~$u_\omega^{M + a}$ belong to~$\M \cap \mathcal{M}_\omega^{M + a}$ and, consequently, they define the same function.

By iteration, the arguments extends to any~$a \ge 0$.
\end{proof}

This corollary essentially tells that when~$M / |\omega|$ is greater than the universal constant~$M_0$ found in Proposition~\ref{dist1prop}, then the upper constraint~$\{ \omega \cdot x = M \}$ becomes immaterial for the minimal minimizer~$\u$, which starts attaining values below the threshold~$- 9 / 10$ well before touching that constraint.

The next result shows that a similar behavior also occurs with the lower constraint~$\{ \omega \cdot x = 0 \}$, thus proving that the minimal minimizer is unconstrained. Recalling the notation introduced right above Lemma~\ref{min2lem}, we state the following

\begin{proposition} \label{unconstrprop}
If~$M \ge M_0 |\omega|$, then~$\u$ is unconstrained, that is~$\u \in \mathcal{M}_\omega^{- a, M + a}$, for any~$a \ge 0$.
\end{proposition}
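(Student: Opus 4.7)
Since $a\ge 0$, the weakened constraints defining $\mathcal{A}_\omega^{-a,M+a}$ are automatically satisfied by every element of $\A$, so $\u\in\mathcal{A}_\omega^{-a,M+a}$ comes for free, and the statement reduces to proving the energy inequality $\F(\u)\le\F(v)$ for every competitor $v\in\mathcal{A}_\omega^{-a,M+a}$. My plan mirrors the template of Corollary~\ref{upperunconst}: I set $v_a:=u_\omega^{-a,M+a}$, the minimal minimizer in the enlarged class, and aim to show that $v_a$ actually belongs to $\A$. Granted this, the minimality of $\u$ in $\A$ gives $\F(\u)\le\F(v_a)$, while the minimality of $v_a$ applied to the admissible competitor $\u$ gives the reverse inequality; hence $\F(\u)=\F(v_a)$ and both functions minimize $\F$ in both classes, from which $\u\in\mathcal{M}_\omega^{-a,M+a}$ follows. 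By iterating the argument in increments of $|\omega|$, it is enough to handle the case $a\le|\omega|$.

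The inclusion $v_a\in\A$ splits into two one-sided bounds. The upper one, $v_a\le-9/10$ on $\{\omega\cdot x\ge M\}$, is immediate from Proposition~\ref{dist1prop} applied to $v_a$ in the strip $\mathcal{S}_\omega^{-a,M+a}$, whose width $(M+2a)/|\omega|\ge M_0$ meets the hypothesis: we obtain $v_a\le-9/10$ on $\{\omega\cdot x\ge M+a-|\omega|\}$, which contains $\{\omega\cdot x\ge M\}$ as soon as $a\le|\omega|$. The lower one, $v_a\ge 9/10$ on $\{\omega\cdot x\le 0\}$, is the substantive content of the proposition and amounts to a symmetric distance estimate: the sublevel set $\{v_a<9/10\}$ should sit at distance at least $1$ from the lower constraint $\{\omega\cdot x=-a\}$. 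To prove it, I would rerun the dichotomy-and-Birkhoff scheme of Proposition~\ref{dist1prop}: via the energy bound \eqref{minminenest} and the H\"older estimate~\eqref{holderunipre} one produces a ball $B_{\sqrt n}(\bar z)\subset\mathcal{S}_\omega^{-a,M+a}$ on which $|v_a|\ge 9/10$, and one must rule out the alternative $v_a\le-9/10$ throughout this ball.

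The main obstacle is precisely this ruling-out step, because the translation-and-minimality chain of Proposition~\ref{dist1prop} is genuinely asymmetric: iterating $v_a(x)\le v_a(x-ke_1)$ (with $\omega_1>0$) drives the point towards $\{\omega\cdot y\le-a\}$, where $v_a\ge 9/10$, and hence yields only a trivial upper bound on $v_a$. I plan to bridge this by invoking the \emph{maximal} minimizer $\bar v_a:=\sup\mathcal{M}_\omega^{-a,M+a}$, defined by the dual construction to that of Subsection~\ref{minminsubsec}: by maximality one has the reversed chain $\bar v_a(x)\ge\bar v_a(x-ke_1)$, and iterating this together with the upper boundary datum $\bar v_a\le-9/10$ on $\{\omega\cdot x\ge M+a\}$ now produces the contradiction needed to exclude $\bar v_a\le-9/10$ on $B_{\sqrt n}(\bar z)$. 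Propositions~\ref{ubirk} and~\ref{fatbirk} then extend the remaining alternative $\bar v_a\ge 9/10$ into a half-space reaching past the lower constraint, establishing the symmetric distance estimate for $\bar v_a$. Finally, I would transfer this estimate back to $v_a$ via the pointwise comparison $v_a\le\bar v_a$ together with the fact that both functions share the same infimum energy in $\mathcal{M}_\omega^{-a,M+a}$, which, by the equality case of Lemma~\ref{uvmMlem}, forces their superlevel sets to coincide on the relevant half-space and completes the argument.
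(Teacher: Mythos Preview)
Your reduction to showing that some absolute minimizer of $\F$ in $\mathcal{A}_\omega^{-a,M+a}$ lies in $\A$ is sound, and you correctly identify that the minimal minimizer $v_a$ only gives the upper distance estimate while a maximal minimizer $\bar v_a$ would give the lower one. The gap is your final transfer step. Once you know $v_a\le\bar v_a$ pointwise, the inequality in Lemma~\ref{uvmMlem} becomes the trivial identity $\K(v_a)+\K(\bar v_a)\le\K(v_a)+\K(\bar v_a)$, so its ``equality case'' carries no information whatsoever; it certainly does not force the superlevel sets $\{v_a\ge 9/10\}$ and $\{\bar v_a\ge 9/10\}$ to coincide. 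And the inequality $v_a\le\bar v_a$ goes the wrong way to deduce $v_a\ge 9/10$ from $\bar v_a\ge 9/10$. So as written the argument does not close: you end up with the upper bound for $v_a$ and the lower bound for $\bar v_a$, but neither function is shown to satisfy both constraints of $\A$.

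The paper sidesteps this entirely with a one-line translation trick that you are missing. Given any competitor $v\in\mathcal{A}_\omega^{-a,M+a}$, pick $k\in\Z^n$ with $\omega\cdot k\ge a$; then $\tau_k v\in\mathcal{A}_\omega^{0,\,M+a+\omega\cdot k}$, because the lower constraint shifts to level $-a+\omega\cdot k\ge 0$. Corollary~\ref{upperunconst} already tells you that $\u$ minimizes $\F$ in this last class, so $\F(\u)\le\F(\tau_k v)=\F(v)$ by~\eqref{Kper}--\eqref{Wper}. That is the whole proof: the lower constraint is never confronted directly, it is simply translated away. Your approach could in principle be completed by developing the maximal-minimizer theory in full (dual Birkhoff property, dual version of Proposition~\ref{dist1prop}, and then a genuine argument linking $v_a$ and $\bar v_a$), but this is substantially more work than the paper's route and the transfer step you sketched is not it.
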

\begin{proof}
Let~$k \in \Z^n$ be such that~$\omega \cdot k \ge a$. Given~$v \in \mathcal{A}_\omega^{- a, M + a}$, we consider its translation~$\tau_k v \in \mathcal{A}_\omega^{M + a + \omega \cdot k}$. By Corollary~\ref{upperunconst}, it then holds~$\F(\u) \le \F(\tau_k v)$. The thesis then follows, as~$\F(v) = \F(\tau_k v)$ by~\eqref{Kper}-\eqref{Wper}.
\end{proof}

To conclude the subsection, we combine the previous proposition with the results of Subsection~\ref{cptsubsec} and obtain that~$\u$ is indeed a class~A minimizer.

\begin{theorem}
If~$M \ge M_0 |\omega|$, then~$\u$ is a class~A minimizer of the functional~$\E$.
\end{theorem}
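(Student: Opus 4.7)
\emph{Plan of proof.} The task is to verify that~$\u$ is a local minimizer of~$\E$ in every bounded open set~$\Omega \subset \R^n$, and not only in those compactly contained in the strip~$\S$. The plan is to enlarge the strip until it swallows~$\Omega$ and then reduce to the compact-perturbation minimality already established in Subsection~\ref{cptsubsec}.

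Concretely, since~$\Omega$ is bounded, I would first pick~$a \ge 0$ so large that~$\Omega \subset\subset \mathcal{S}_\omega^{-a, M+a}$ and, moreover, choose~$m \in \N^{n-1}$ large enough that~$\Omega$ is compactly contained in a fundamental domain of the quotient~$\mathcal{S}_\omega^{-a, M+a}/\sim_m$. Proposition~\ref{unconstrprop} supplies~$\u \in \mathcal{M}_\omega^{-a, M+a}$, so the only task left is to upgrade this to~$\u \in \mathcal{M}_{\omega, m}^{-a, M+a}$, after which Proposition~\ref{cptminprop} applied to the enlarged strip immediately yields the desired local minimality of~$\u$ in~$\Omega$.

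The upgrade is a direct consequence of the doubling argument. Proposition~\ref{u=umprop}, applied to the enlarged strip in place of~$\SAB$, gives~$u_{\omega, m}^{-a, M+a} = u_\omega^{-a, M+a}$; in particular, since~$u_\omega^{-a, M+a}$ is~$\sim$-periodic, a simple change of variables identifies the infimum of~$\Fm$ on~$\mathcal{A}_{\omega, m}^{-a, M+a}$ with~$N$ times the infimum of~$\F$ on~$\mathcal{A}_\omega^{-a, M+a}$, where~$N := m_1 \cdots m_{n-1}$. Because~$\u$ is~$\sim$-periodic and achieves the latter infimum (by Proposition~\ref{unconstrprop}), its~$\Fm$-energy equals exactly~$N$ times that value, so~$\u$ also attains the~$\sim_m$-periodic infimum. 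Hence~$\u \in \mathcal{M}_{\omega, m}^{-a, M+a}$, and Proposition~\ref{cptminprop} then gives~$\E(\u; \Omega) \le \E(v; \Omega)$ for every~$v$ coinciding with~$\u$ outside~$\Omega$. Since~$\Omega$ was arbitrary, the proof is complete.

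I do not anticipate any serious obstacle here: every step is an immediate corollary of machinery already developed in Section~\ref{s>1/2sec}. The only mildly delicate point is the~$N$-fold accounting that transfers absolute minimality from the~$\sim$-periodic class to the~$\sim_m$-periodic one, which is routine once the doubling property is in hand.
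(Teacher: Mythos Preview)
Your proposal is correct and follows essentially the same route as the paper: enlarge the strip by~$a$ via Proposition~\ref{unconstrprop}, invoke the doubling property (Proposition~\ref{u=umprop}) together with the multiplicative identity~$\Fm(\u) = (m_1 \cdots m_{n-1}) \F(\u)$ to place~$\u$ in~$\mathcal{M}_{\omega, m}^{-a, M+a}$, and conclude with Proposition~\ref{cptminprop}. The paper's argument is line-for-line the one you outline.
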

\begin{proof}
Let~$\Omega$ be any given bounded subset of~$\R^n$. Take~$a \ge 0$  and~$m \in \Z^{n - 1}$ large enough to have~$\Omega$ compactly contained in the quotient $\widetilde{\mathcal{S}}_{\omega, m}^{- a, M + a}$ (recall notation~\eqref{quotABm}). By virtue of Proposition~\ref{u=umprop} we know that~$u_\omega^{- a, M + a}$ is the minimal minimizer of the class~$\mathcal{M}_{\omega, m}^{- a, M + a}$. On the other hand, Proposition~\ref{unconstrprop} yields~$\F(\u) = \F(u_\omega^{- a, M + a})$. Recalling the terminology introduced in Subsection~\ref{doubsubsec}, we then have
$$
\Fm(\u) = c_m \F(\u) = c_m \F(u_\omega^{- a, M + a}) = \Fm(u_\omega^{- a, M + a}),
$$
with~$c_m = \prod_{i = 1}^{n - 1} m_i$. Hence,~$\u \in \mathcal{M}_{\omega, m}^{- a, M + a}$ and Proposition~\ref{cptminprop} implies that~$\u$ is a local minimizer of~$\E$ in~$\Omega$.
\end{proof}

\subsection{The case of irrational directions} \label{irrsubsec}

Here we finish the proof of Theorem~\ref{mainthm} for kernels satisfying hypothesis~\eqref{Krapid}, by extending the results obtained in the previous subsections to irrational vectors~$\omega$. This task is accomplished by means of an approximation argument, whose most technical steps are inspired by~\cite[Section~7]{BV08}.

\medskip

Fix~$\omega \in \R^n \setminus \Q^n$ and consider a sequence~$\{ \omega_j \}_{j \in \N} \subset \Q^n \setminus \{ 0 \}$ converging to~$\omega$. Denote with~$u_j$ the class~A minimizer corresponding to~$\omega_j$, given by our construction. We recall that~$u_j \in H^s_\loc(\R^n) \cap L^\infty(\R^n)$, with~$|u_j| \le 1$ in~$\R^n$, and that
\begin{equation} \label{ujplanelike}
\left\{ x \in \R^n : |u_j(x)| \le \frac{9}{10} \right\} \subseteq \left\{ x \in \R^n : \frac{\omega_j}{|\omega_j|} \cdot x \in [0, M_0] \right\},
\end{equation}
for any~$j \in \N$. Moreover, by Corollary~\ref{minminholdercor}, the~$u_j$'s are uniformly bounded in~$C^{0, \alpha}(\R^n)$, for some universal~$\alpha \in (0, 1)$. Hence, by Arzel\`a-Ascoli Theorem there exists a subsequence of~$\{ u_j \}$ - which, without loss of generality, we will assume to be~$\{ u_j \}$ itself - converging to some continuous function~$u$, uniformly on compact subsets of~$\R^n$.

Of course,~$|u| \le 1$ in~$\R^n$. Also,~\eqref{ujplanelike} passes to the limit, so that the same inclusion holds with~$u$ and~$\omega$ replacing~$u_j$ and~$\omega_j$. In order to finish the proof of Theorem~\ref{mainthm} we therefore only need to show that~$u$ is a class~A minimizer of~$\E$. To do this, let~$R \ge 1$ be a fixed number: we claim that~$u$ is a local minimizer of~$\E$ in~$B_R$, that is~$\E(u; B_R) < +\infty$ and
\begin{equation} \label{uclassA}
\E(u; B_R) \le \E(u + \varphi; B_R) \quad \mbox{for any } \varphi \mbox{ supported inside } B_R.
\end{equation}
Observe that, going back to Remark~\ref{restrmk}, this implies that~$u$ is a class~A minimizer.

To see that~\eqref{uclassA} is true, we first apply Proposition~\ref{enestprop} to~$u_j$ and obtain that
\begin{equation} \label{Eu_j}
\E(u_j; B_{R + 1}) \le C_R,
\end{equation}
for some constant~$C_R > 0$ independent of~$j$. Furthermore, by Fatou's lemma, we know that
\begin{equation} \label{uuj}
\E(u; B_{R + \tau}) \le \liminf_{j \rightarrow +\infty} \E(u_j; B_{R + \tau}),
\end{equation}
for any~$\tau \in [0, 1]$, and thus, in particular,
\begin{equation} \label{Eu}
\E(u; B_R) \le \E(u; B_{R + 1}) \le C_R < +\infty.
\end{equation}
Recall that~$\E(u; \cdot)$ is monotone non-decreasing with respect to set inclusion.

Now, we deal with the limit on the right-hand side of~\eqref{uuj}.

Let~$\{ \varepsilon_j \}_{j \in \N}$ be the sequence of positive real numbers given by
\begin{equation} \label{epsjdef}
\varepsilon_j := \| u_j - u \|_{L^\infty(B_{R + 1})}.
\end{equation}
Clearly,~$\varepsilon_j$ converges to~$0$ and we may also assume~$\varepsilon_j \le 1 / 2$ for any~$j$. Take~$\eta_j \in C^\infty_c(\R^n)$ to be a cut-off function satisfying~$0 \le \eta_j \le 1$ in~$\R^n$,~$\eta_j = 1$ in~$B_R$,~$\supp(\eta_j) \subseteq B_{R + \varepsilon_j}$ and~$|\nabla \eta_j| \le 2 / \varepsilon_j$ in~$\R^n$. Let~$\varphi$ be as in~\eqref{uclassA} and suppose without loss of generality that~$\varphi \in L^\infty(\R^n)$. We are also allowed to assume~$\E(u + \varphi; B_R) < +\infty$, formula~\eqref{uclassA} being trivially satisfied otherwise. As a consequence of this,~\eqref{Eu},~\eqref{Kbounds} and the boundedness of~$u$ and~$\varphi$, we have that~$\varphi \in H^s(B_{R + 1})$. We define~$v := u + \varphi$ and
$$
v_j := \eta_j u + (1 - \eta_j) u_j + \varphi \quad \mbox{in } \R^n.
$$
Notice that~$v_j = v$ in~$B_R$ and~$v_j = u_j$ in~$\R^n \setminus B_{R + \varepsilon_j}$. Accordingly,~$v_j$ is an admissible competitor for~$u_j$ in~$B_{R + \varepsilon_j}$ and thus
\begin{equation} \label{ujvj}
\E(u_j; B_{R + \varepsilon_j}) \le \E(v_j; B_{R + \varepsilon_j}),
\end{equation}
in view of the minimizing property of~$u_j$. Furthermore,~$v_j$ converges to~$v$ uniformly on compact subsets of~$\R^n$ and, in particular,
$$
\| v_j - v \|_{L^\infty(B_{R + 1})} \le \| u_j - u \|_{L^\infty(B_{R + 1})} = \varepsilon_j.
$$

Fix a number~$\delta \in (0, 1)$ and take~$j$ big enough to have~$\varepsilon_j < \delta / 2$. We address the right-hand side of~\eqref{ujvj}. Concerning its kinetic part, we decompose the domain of integration~$\C_{B_{R + \varepsilon_j}}$ as
\begin{equation} \label{DEFj}
\C_{B_{R + \varepsilon_j}} = D_\delta \cup E_{j, \delta} \cup F_{j, \delta},
\end{equation}
where, up to sets of measure zero,
\begin{align*}
D_\delta & := \left( B_R \times B_R \right) \cup \left( B_R \times \left( B_{R + \delta} \setminus B_R \right) \right) \cup \left( \left( B_{R + \delta} \setminus B_R \right) \times B_R \right), \\
E_{j, \delta} & := \left( \C_{B_{R + \varepsilon_j}} \cap \left( B_{R + \delta} \times B_{R + \delta} \right) \right) \setminus D_\delta, \\
F_{j, \delta} & := \C_{B_{R + \varepsilon_j}} \setminus \left( B_{R + \delta} \times B_{R + \delta} \right).
\end{align*}
See Figure~\ref{DEFdecomp}. Also set
$$
F_\delta := \C_{B_R} \setminus \left( B_{R + \delta} \times B_{R + \delta} \right),
$$
and observe that, analogously to~\eqref{DEFj}, it holds
\begin{equation} \label{DF}
\C_{B_R} = D_\delta \cup F_\delta.
\end{equation}

\begin{figure}[h]
\centering
\fbox{\includegraphics[width=0.9\textwidth]{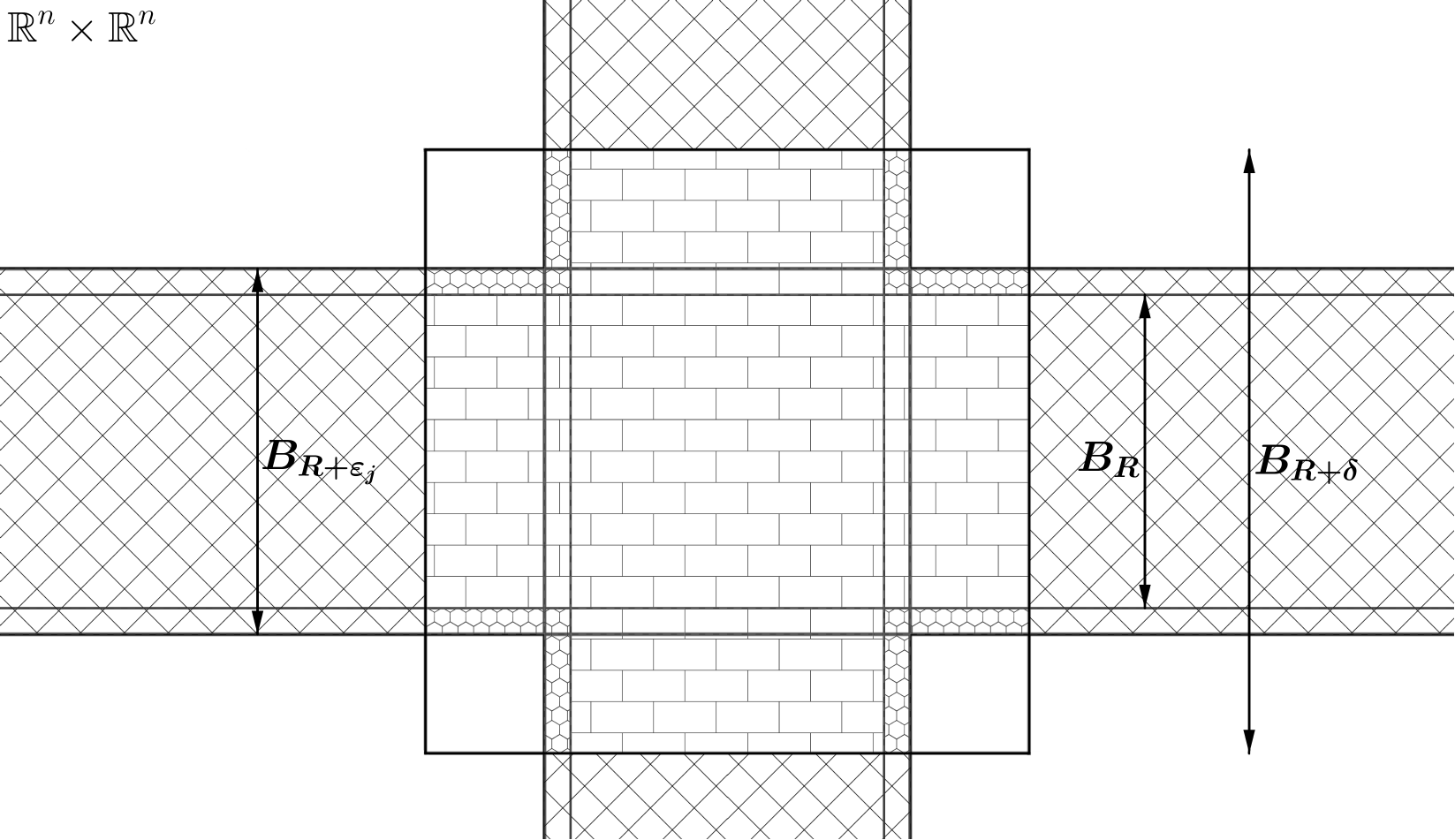}}
\caption{The decomposition of the region~$\C_{B_{R + \varepsilon_j}}$ as given by~\eqref{DEFj}. The set~$D_\delta$ is rendered in the `brick' texture,~$E_{j, \delta}$ in the `honeycomb' one and the `diagonal crosshatch' is used to denote~$F_{j, \delta}$.}
\label{DEFdecomp}
\end{figure}

First, we deal with the tail term of~$\E$, which corresponds to~$F_{j, \delta}$. Note that~$F_{j, \delta}$ may be written as the union of~$B_{R + \varepsilon_j} \times \left( \R^n \setminus B_{R + \delta} \right)$ and~$\left( \R^n \setminus B_{R + \delta} \right) \times B_{R + \varepsilon_j}$. By~\eqref{Ksimm}, it is clearly enough to study what happens inside the first set of this union. Given~$x \in B_{R + \varepsilon_j}$ and~$y \in \R^n \setminus B_{R + \delta}$, we have
$$
|v_j(x) - v_j(y)| = |v_j(x) - u_j(y)| \le 3 + |\varphi(x)|.
$$
Moreover,~$|x| \le R + \varepsilon_j \le [ (R + \delta / 2) / (R + \delta) ] |y|$ and thus
$$
|x - y| \ge |y| - |x| \ge \frac{\delta}{2(R + \delta)} |y|.
$$
Using~\eqref{Kbounds}, for any~$x \in B_{R + 1}$ and~$y \in \R^n \setminus B_{R + \delta}$ we get
\begin{align*}
\left| v_j(x) - v_j(y) \right|^2 K(x, y) \chi_{B_{R + \varepsilon_j}}(x) \le C \, \frac{1 + |\varphi(x)|^2}{|y|^{n + 2 s}} \in L^1 \left( B_{R + 1} \times (\R^n \setminus B_{R + \delta}) \right),
\end{align*}
for some constant~$C > 0$ independent of~$j$. Recalling that~$v_j$ converges pointwise to~$v$ in~$\R^n$, by the Dominated Convergence Theorem we conclude that
\begin{equation} \label{Fvjv}
\lim_{j \rightarrow +\infty} \iint_{F_{j, \delta}} \left| v_j(x) - v_j(y) \right|^2 K(x, y) \, dx dy = \iint_{F_\delta} \left| v(x) - v(y) \right|^2 K(x, y) \, dx dy.
\end{equation}

Now, we focus on~$E_{j, \delta}$. By the triangle inequality, for any~$x, y \in B_{R + 1}$ we write
\begin{align*}
\left| v_j(x) - v_j(y) \right| & \le \left| \eta_j(x) - \eta_j(y) \right| \left| u(x) - u_j(x) \right| + |\eta_j(y)| \left| u(x) - u(y) \right| \\
& \quad + \left| 1 - \eta_j(y) \right| \left| u_j(x) - u_j(y) \right| + \left| \varphi(x) - \varphi(y) \right| \\
& \le \varepsilon_j \left| \eta_j(x) - \eta_j(y) \right| + \left| u(x) - u(y) \right| + \left| u_j(x) - u_j(y) \right| + \left| \varphi(x) - \varphi(y) \right|,
\end{align*}
where we also used~\eqref{epsjdef} and that~$|\eta_j| \le 1$. Hence, taking advantage of~\eqref{Kbounds} and the regularity of~$\eta_j$,
\begin{align*}
& \left[ \iint_{E_{j, \delta}} \left| v_j(x) - v_j(y) \right|^2 K(x, y) \, dx dy \right]^{\frac{1}{2}} \\
& \hspace{20pt} \le \left[ 4 \Lambda \iint_{E_{j, \delta}} \frac{dx dy}{|x - y|^{n - 2 + 2 s}} \right]^{\frac{1}{2}} + \left[ \iint_{E_{j, \delta}} \left| u(x) - u(y) \right|^2 K(x, y) \, dx dy \right]^{\frac{1}{2}} \\
& \hspace{20pt} \quad + \left[ \iint_{E_{j, \delta}} \left| u_j(x) - u_j(y) \right|^2 K(x, y) \, dx dy \right]^{\frac{1}{2}} + \left[ \iint_{E_{j, \delta}} \left| \varphi(x) - \varphi(y) \right|^2 K(x, y) \, dx dy \right]^{\frac{1}{2}}.
\end{align*}
Note that the arguments of the first, second and fourth integrals on the right-hand side above are integrable on the set~$B_{R + 1} \times B_{R + 1}$, which contains~$E_{j, \delta}$. Thus, by the absolute continuity of the Lebesgue measure in~$\R^n \times \R^n$, it follows that those integrals go to zero, as~$j \rightarrow +\infty$ (observe in this regard that~$|E_{j, \delta}| \rightarrow 0$). Moreover, in view of~\eqref{Eu_j}, we conclude that
\begin{equation} \label{Evjv}
\iint_{E_{j, \delta}} \left| v_j(x) - v_j(y) \right|^2 K(x, y) \, dx dy \le \iint_{E_{j, \delta}} \left| u_j(x) - u_j(y) \right|^2 K(x, y) \, dx dy + 2 \rho_j,
\end{equation}
for some sequence~$\{ \rho_j \}$ of positive real numbers such that
\begin{equation} \label{rholimit}
\lim_{j \rightarrow +\infty} \rho_j = 0.
\end{equation}

We are left with the term involving~$D_\delta$. We recall that~$v_j = v$ in~$B_R$, so that
\begin{equation} \label{BRvjv}
\int_{B_R} \int_{B_R} \left| v_j(x) - v_j(y) \right|^2 K(x, y) \, dx dy = \int_{B_R} \int_{B_R} \left| v(x) - v(y) \right|^2 K(x, y) \, dx dy.
\end{equation}
Therefore, we just need to examine the complement~$D_\delta \setminus \left( B_R \times B_R \right)$ and thus, by symmetry, the region~$B_R \times \left( B_{R + \delta} \setminus B_R \right)$ only. Letting~$x \in B_R$ and~$y \in B_{R + \delta} \setminus B_R$, by~\eqref{epsjdef} we have
\begin{align*}
\left| v_j(x) - v_j(y) \right| & = \left| v(x) - v_j(y) \right| \le \left| v(x) - v(y) \right| + \left| 1 - \eta_j(y) \right| \left| u(y) - u_j(y) \right| \\
& = \left| v(x) - v(y) \right| + \left| \eta_j(x) - \eta_j(y) \right| \left| u(y) - u_j(y) \right| \\
& \le \left| v(x) - v(y) \right| + \varepsilon_j \left| \eta_j(x) - \eta_j(y) \right|.
\end{align*}
Then, by the definition of~$\eta_j$ and~$\eqref{Kbounds}$ we get
\begin{align*}
& \left[ \int_{B_R} \int_{B_{R + \delta} \setminus B_R} \left| v_j(x) - v_j(y) \right|^2 K(x, y) \, dx dy \right]^{\frac{1}{2}} \\
& \hspace{30pt} \le \left[ \int_{B_R} \int_{B_{R + \delta} \setminus B_R} \left| v(x) - v(y) \right|^2 K(x, y) \, dx dy \right]^{\frac{1}{2}} + \left[ \int_{B_R} \int_{B_{R + \delta} \setminus B_R} \frac{4 \Lambda \, dx dy}{|x - y|^{n - 2 + 2 s}} \right]^{\frac{1}{2}} \\
& \hspace{30pt} \le \left[ \int_{B_R} \int_{B_{R + \delta} \setminus B_R} \left| v(x) - v(y) \right|^2 K(x, y) \, dx dy \right]^{\frac{1}{2}} + C \left| B_{R + \delta} \setminus B_R \right|^{\frac{1}{2}},
\end{align*}
for some constant~$C > 0$ independent of~$j$ and~$\delta$. Recalling~\eqref{BRvjv}, we may thence conclude that there exists a function~$r: (0, 1) \to (0, +\infty)$ for which
\begin{equation} \label{rlimit}
\lim_{\delta \rightarrow 0^+} r(\delta) = 0,
\end{equation}
and
\begin{equation} \label{Dvjv}
\iint_{D_\delta} \left| v_j(x) - v_j(y) \right|^2 K(x, y) \, dx dy \le \iint_{D_\delta} \left| v(x) - v(y) \right|^2 K(x, y) \, dx dy + 2 r(\delta),
\end{equation}
for any~$j$ big enough.

Observe now that for the potential term of~$\E$ we may simply estimate
$$
\P(v_j; B_{R + \varepsilon_j}) \le \P(v; B_R) + W^* \left| B_{R + \varepsilon_j} \setminus B_R \right|.
$$
Taking advantage of decomposition~\eqref{DEFj} on both sides of~\eqref{ujvj} and using inequalities~\eqref{Evjv},~\eqref{Dvjv}, we write
\begin{align*}
& \frac{1}{2} \iint_{D_\delta \cup E_{j, \delta} \cup F_{j, \delta}} \left| u_j(x) - u_j(y) \right|^2 K(x, y) \, dx dy + \P(u_j; B_{R + \varepsilon_j}) \\
& \hspace{15pt} = \E(u_j; B_{R + \varepsilon_j}) \le \E(v_j; B_{R + \varepsilon_j}) \\
& \hspace{15pt} \le \frac{1}{2} \iint_{D_\delta} \left| v(x) - v(y) \right|^2 K(x, y) \, dx dy + \frac{1}{2} \iint_{E_{j, \delta}} \left| u_j(x) - u_j(y) \right|^2 K(x, y) \, dx dy \\
& \hspace{15pt} \quad + \frac{1}{2} \iint_{F_{j, \delta}} \left| v_j(x) - v_j(y) \right|^2 K(x, y) \, dx dy + \P(v; B_R) + W^* \left| B_{R + \varepsilon_j} \setminus B_R \right| + r(\delta) + \rho_j,
\end{align*}
which in turn simplifies to
\begin{align*}
& \frac{1}{2} \iint_{D_\delta \cup F_{j, \delta}} \left| u_j(x) - u_j(y) \right|^2 K(x, y) \, dx dy + \P(u_j; B_{R + \varepsilon_j}) \\
& \hspace{30pt} \le \frac{1}{2} \iint_{D_\delta} \left| v(x) - v(y) \right|^2 K(x, y) \, dx dy + \frac{1}{2} \iint_{F_{j, \delta}} \left| v_j(x) - v_j(y) \right|^2 K(x, y) \, dx dy \\
& \hspace{30pt} \quad + \P(v; B_R) + W^* \left| B_{R + \varepsilon_j} \setminus B_R \right| + r(\delta) + \rho_j.
\end{align*}
If we exploit the fact that~$\C_{B_R} \subset D_\delta \cup F_{j, \delta}$ and recall~\eqref{DF},~\eqref{Fvjv},~\eqref{rholimit}, by taking the limit in~$j$ in the previous formula we find
$$
\limsup_{j \rightarrow +\infty} \E(u_j; B_R) \le \E(v; B_R) + r(\delta).
$$
Putting together this last inequality with~\eqref{uuj}, we finally obtain
$$
\E(u; B_R) \le \E(v; B_R) + r(\delta).
$$
Then,~\eqref{uclassA} follows from the arbitrariness of~$\delta$ and~\eqref{rlimit}. We conclude that~$u$ is a class~A minimizer of~$\E$.

\counterwithin{definition}{section}
\counterwithin{equation}{section}

\section{Proof of Theorem~\ref{mainthm} for general kernels} \label{s<1/2sec}

In this section we complete the proof of Theorem~\ref{mainthm}, by extending the results of Section~\ref{s>1/2sec} to kernels which do not necessarily satisfy condition~\eqref{Krapid}. This can be done in consequence of the fact that none of the estimates established there involve any of the parameters appearing in~\eqref{Krapid}. This enables us to perform a limit argument analogous to that of Subsection~\ref{irrsubsec}.

\medskip

Let~$K$ be a kernel satisfying~\eqref{Ksimm},~\eqref{Kbounds} and~\eqref{Kper} only. Given any monotone increasing sequence~$\{ R_j \}_{j \in \N} \subset [2, +\infty)$ which diverges to~$+\infty$, we set
$$
K_j(x, y) := K(x, y) \chi_{[0, R_j]}(|x - y|) \quad \mbox{for any } x, y \in \R^n.
$$
Notice that the new truncated kernel~$K_j$ still satisfies hypotheses~\eqref{Ksimm},~\eqref{Kbounds} and~\eqref{Kper}. Moreover,~$K_j$ clearly fulfills the additional requirement~\eqref{Krapid} with~$\bar{R} = R_j$.

Let~$\E_j$ be the energy functional~\eqref{EOmegadef} corresponding to~$K_j$. For a fixed direction~$\omega \in \R^n \setminus \{ 0 \}$, let~$u_j$ be the plane-like class A minimizer for~$\E_j$ directed along~$\omega$. The existence of such minimizers is a consequence of Section~\ref{s>1/2sec}, as~$K_j$ verifies~\eqref{Krapid}. It holds
\begin{equation} \label{ujplanelikebis}
\left\{ x \in \R^n : \left| u_j(x) \right| \le \frac{9}{10} \right\} \subseteq \left\{ x \in \R^n : \frac{\omega}{|\omega|} \cdot x \in [0, M_0] \right\},
\end{equation}
for a universal value~$M_0 > 0$. Furthermore,~$|u_j| \le 1$ in~$\R^n$ and, in view of Corollary~\ref{minminholdercor},~$\| u_j \|_{C^{0, \alpha}(\R^n)} \le C$, for some~$\alpha \in (0, 1]$ and~$C > 0$. We highlight the fact that we can choose~$M_0$,~$\alpha$ and~$C$ to be independent of~$j$, since each~$K_j$ satisfies~\eqref{Kbounds} with the same structural constants. Accordingly, by Arzel\`a-Ascoli Theorem~$\{ u_j \}$ converges, up to a subsequence, to a continuous function~$u$, uniformly on compact subset of~$\R^n$.

Observe that~$u$ satisfies~\eqref{ujplanelikebis}. Also, if~$\omega$ is rational then each~$u_j$ is~$\sim$-periodic and, consequently, so is~$u$.
To prove that~$u$ is a class~A minimizer, fix~$R \ge 1$ and consider a perturbation~$\varphi$, with~$\supp(\varphi) \subset \subset B_R$. We know that
$$
\E_j(u_j; B_R) \le \E_j(u_j + \varphi; B_R) \quad \mbox{for any } j \in \N.
$$
On the one hand, a simple application of Fatou's lemma implies that
$$
\E(u; B_R) \le \liminf_{j \rightarrow +\infty} \E_j(u_j; B_R).
$$
On the other hand, following the strategy presented in Subsection~\ref{irrsubsec} it is not hard to see that we also have
$$
\limsup_{j \rightarrow +\infty} \E_j(u_j; B_R) \le \E(u + \varphi; B_R).
$$
It follows that~$u$ is a class~A minimizer of~$\E$ and the proof of Theorem~\ref{mainthm} is therefore complete.

\section{Stability of Theorem~\ref{mainthm} as~$s$ approaches~$1$} \label{sto1sec}

In this brief section we discuss what happens when we take the limit as~$s \rightarrow 1^-$ in Theorem~\ref{mainthm}. Since (at least for some choices of~$K$) the energy in~\eqref{Edef} becomes closer and closer to a local gradient functional, as~$s$ approaches~$1$, one expects to recover the result of~\cite{V04} in the limit. While this is certainly true, the rigorous computation supporting this intuition is not completely trivial. We include it here in for the reader's convenience.

\medskip

We restrict ourselves to consider the simpler case determined by the family of kernels
$$
K_s(x, y) := \frac{1 - s}{|x - y|^{n + 2 s}}.
$$
Corresponding to these choices, we have the energy functionals
\begin{equation} \label{Esdef}
\E_s(u; \Omega) := \frac{1}{2} \iint_{\C_\Omega} |u(x) - u(y)|^2 K_s(x, y) \, dx dy + \int_\Omega W(x, u(x)) \, dx,
\end{equation}
defined for any measurable set~$\Omega \subset \R^n$ and with~$\C_\Omega$ as in~\eqref{COmega}.

As~$s \rightarrow 1^-$, we expect (see e.g.~\cite{BBM01}) the energy~$\E_s$ to converge in some sense to the local functional
\begin{equation} \label{limitfunc}
\E(u; \Omega) := \frac{C_\star}{2} \int_\Omega |\nabla u(x)|^2 \, dx + \int_\Omega W(x, u(x)) \, dx,
\end{equation}
for some dimensional constant\footnote{To be precise,~$C_\star$ is the constant denoted with~$K$ in~\cite[Corollary~2]{BBM01} and with~$K_{2, N}$ in~\cite[Formula~(3)]{P04}, up to a multiplicative dimensional constant. Its value is~$C_\star := \frac{1}{2} \int_{\partial B_1} |e_1 \cdot \sigma|^2 \, d\Haus^{n - 1}(\sigma)$.}~$C_\star > 0$. Notice in particular the factor~$1 - s$ appearing in the definition of~$K_s$, that corrects the energy and prevents its blow-up, as~$s \rightarrow 1$.

In the following, we show how~\cite[Theorem~8.1]{V04} for the energy defined in~\eqref{limitfunc} can be recovered from Theorem~\ref{mainthm} here, applied to the family of functionals~$\E_s$ of~\eqref{Esdef}.

Note that in~\cite[Theorem~8.1]{V04} the author proves the existence of plane-like minimizers for a far more general class of Ginzburg-Landau-type functionals than those comprised by~\eqref{limitfunc}, by allowing for instance the presence of a non-homogeneous gradient term such as~\eqref{localkin}. Although we believe it would be very interesting to investigate how such larger class of local functionals can be approximated by non-local ones, this goes well beyond the scopes of the present section, in which we aim to give just a glimpse of how our result compares with that of~\cite{V04}. However, we stress that the generality covered by~\eqref{Esdef} and~\eqref{limitfunc} still is rather wide and meaningful in relation to plane-like minimizers which are not one-dimensional, due to the presence of the space-dependent potential~$W$.

\smallskip

We are now ready to state and prove the following result.

\begin{theorem} \label{sto1thm}
Let~$n \ge 2$ and assume that~$W$ satisfies~\eqref{Wzeros},~\eqref{Wgamma},~\eqref{Wbound},~\eqref{Wper}. Fix any value~$\theta \in (0, 1)$ and any direction~$\omega \in \R^n \setminus \{ 0 \}$. For any~$s \in (0, 1)$, let~$u_s$ be the plane-like class~A minimizer of the energy~$\E_s$, associated with~$\theta$ and~$\omega$, as given by Theorem~\ref{mainthm}. Then, there exists an increasing sequence~$\{ s_k \}_{k \in \N}$ converging to~$1$, such that~$u_{s_k}$ converges in~$C^1_\loc(\R^n)$ to some function~$u: \R^n \to [-1, 1]$, as~$k \rightarrow +\infty$.\\ Furthermore,~$u$ is a class~A minimizer\footnote{Of course, the notions of local and class~A minimizer of the functional~$\E$ defined in~\eqref{limitfunc} are very classical and indeed quite similar to those introduced in Definitions~\ref{mindef} and~\ref{classAdef} for non-local energies. For us, a class~A minimizer of~$\E$ is a function~$u$ for which~$\E(u; \Omega) < +\infty$ and~$\E(u; \Omega) \le \E(v; \Omega)$ for any~$v$ that coincides with~$u$ outside of~$\Omega$, for any bounded set~$\Omega \subset \R^n$.} of~$\E$ satisfying
\begin{equation} \label{limitupl}
\bigg\{ x \in \R^n : |u(x)| < \theta \bigg\} \subset \bigg\{ x \in \R^n : \frac{\omega}{|\omega|} \cdot x \in [0, M_0] \bigg\},
\end{equation}
for some constant~$M_0 > 0$ that depends only on~$n$,~$W^*$, the function~$\gamma$ and~$\theta$.
\end{theorem}

Theorem~\ref{sto1thm} yields the convergence of the plane-like minimizers of~$\E_s$ to those of~$\E$ and establishes, as a byproduct of Theorem~\ref{mainthm}, the existence of the latter. In this way, the main result of~\cite{V04} holds as a consequence of Theorem~\ref{mainthm}.

Before heading to the proof of Theorem~\ref{sto1thm}, we first address the validity of the following auxiliary result.

\begin{lemma} \label{intestlem}
Let~$\Omega \subset \subset \Omega'$ be bounded open subsets of~$\R^n$, with~$\Omega$ having Lipschitz boundary. Let~$\{ s_k \}_{k \in \N} \subset (1/4, 1)$ be a sequence converging to~$1$ and~$\{ w_k \}_{k \in \N}$ be a sequence of functions, bounded in~$L^\infty(\R^n) \cap C^{0, 1}(\Omega')$. Then,
$$
\lim_{k \rightarrow +\infty} \int_{\Omega} \int_{\R^n \setminus \Omega} |w_k(x) - w_k(y)|^2 K_{s_k}(x, y) \, dx dy = 0.
$$
\end{lemma}
\begin{proof}
Let~$\varepsilon > 0$ be a small number to be chosen later. In what follows, we indicate with~$c$ any positive constant that does not depend on neither~$k$ nor~$\varepsilon$. By our assumptions on~$\{ w_k \}$, we have that
$$
|w_k(x) - w_k(y)| \le c \Big[ |x - y| \chi_{[0, \varepsilon)}(|x - y|) + \chi_{[\varepsilon, +\infty)}(|x - y|) \Big] \quad \mbox{for any } x \in \Omega, \, y \in \R^n,
$$
provided~$\varepsilon$ is sufficiently small. By this, we compute
\begin{equation} \label{intleI+J}
\int_{\Omega} \int_{\R^n \setminus \Omega} |w_k(x) - w_k(y)|^2 K_{s_k}(x, y) \, dx dy \le c (1 - s_k) \left( I_{\varepsilon} + J_\varepsilon \right),
\end{equation}
where
\begin{align*}
I_\varepsilon & := \int_{\Omega} \left( \int_{B_\varepsilon(x) \setminus \Omega} \frac{dy}{|x - y|^{n - 2 + 2 s_k}} \right) dx,\\
\mbox{and } \, J_\varepsilon & := \int_{\Omega} \left( \int_{\R^n \setminus B_\varepsilon(x)} \frac{dy}{|x - y|^{n + 2 s_k}} \right) dx.
\end{align*}
By noticing that
$$
x \in \Omega, \, y \in B_\varepsilon(x) \setminus \Omega \quad \mbox{implies that} \quad x \in \Omega_\varepsilon := \Big\{ x \in \Omega : \dist(x, \partial \Omega) < \varepsilon \Big\},
$$
and changing variables appropriately, we estimate the first integral as follows:
$$
I_\varepsilon \le \int_{\Omega_\varepsilon} \left( \int_{B_\varepsilon} \frac{dz}{|z|^{n - 2 + 2 s_k}} \right) dx \le c \, \frac{\varepsilon^{3 - 2 s_k}}{1 - s_k}.
$$
Note that we took advantage of the Lipschitzianity of~$\partial \Omega$ to deduce the last inequality. In a similar (and easier) way, we also obtain
$$
J_\varepsilon \le c \, \varepsilon^{- 2 s_k}.
$$
By combining these last two inequalities with~\eqref{intleI+J}, we get
$$
\int_{\Omega} \int_{\R^n \setminus \Omega} |w_k(x) - w_k(y)|^2 K_{s_k}(x, y) \, dx dy \le c \left( 1 + \frac{1 - s_k}{\varepsilon^3} \right) \varepsilon^{3 - 2 s_k}.
$$
Select now~$\varepsilon = \varepsilon_k := \sqrt[3]{1 - s_k}$. By plugging this in the last formula, we end up with
$$
\int_{\Omega} \int_{\R^n \setminus \Omega} |w_k(x) - w_k(y)|^2 K_{s_k}(x, y) \, dx dy \le c (1 - s_k)^{1 - \frac{2}{3} s_k} \le c \sqrt[3]{1 - s_k},
$$
and the thesis readily follows.
\end{proof}

With the aid of Lemma~\ref{intestlem}, we can now prove the main result of the present section.

\begin{proof}[Proof of Theorem~\ref{sto1thm}]
First, we observe that, by the regularity theory for the 
fractional Laplacian (see e.g.~\cite[Theorem~6.1]{CS11}),
the minimizers~$u_s$ belong to~$C^{1, \alpha}(\R^n)$, with~$\alpha > 0$ independent of~$s$, and actually form a bounded family in that space, for, say,~$s > 3/4$. Note that the result of~\cite{CS11} holds in principle for viscosity solutions. But this notion is indeed equivalent to bounded weak solutions, 
when dealing with bounded, continuous right-hand sides (see~\cite{SerV14}; see also~\cite{DKV14}
for related results based on Moser's iteration). In our case, the~$u_s$'s are bounded weak solutions of equations with right-hand sides given by~$W_r(\cdot, u_s)$, which are bounded and continuous, since the~$u_s$'s are, thanks to Theorem~\ref{holderthm}.

By Arzel\`a-Ascoli Theorem, then there exists a sequence~$\{ s_k \}_{k \in \N}$ increasing to~$1$, such that~$u_{s_k}$ converge in~$C^1_\loc(\R^n)$ to some differentiable function~$u$. Observe that~$u$ satisfies~\eqref{limitupl}. To see this, it is sufficient to notice that the~$u_s$'s satisfy an analogous inclusion, with~$M_0$ independent of~$s$, for~$s$ close to~$1$. But this is true, as one can check by inspecting the proof of Theorem~\ref{mainthm}, when applied to the functionals~$\E_s$ (observe in particular that the constants~$C_1$ and~$C_2$ appearing in~\eqref{holderunipre} and~\eqref{minminenest}, respectively, may be chosen independently of~$s$).

To conclude the proof, we are therefore only left to show that~$u$ is a class~A minimizer of the energy~$\E$ given by~\eqref{limitfunc}. To this aim, let~$R \ge 4$ and~$v$ be a function coinciding with~$u$ outside of the ball~$B_R$. We need to prove that
\begin{equation} \label{limitulev}
\E(u; B_R) \le \E(v; B_R).
\end{equation}
By standard density results in Sobolev spaces, we may suppose without loss of generality that~$v \in C^1(\overline{B_R})$. In view of the regularity of~$u$ and~$v$, we also have that~$v \in C^{0, 1}(\R^n)$

To deduce~\eqref{limitulev}, we modify~$v$ outside a ball containing~$B_R$ in order to obtain a sequence of functions coinciding with the~$u_{s_k}$'s there and be in position to take advantage of the minimizing properties of each~$u_{s_k}$. The technical details of this construction are presented here below.

For a fixed~$\delta \in (0, 1)$, we consider a radially symmetric and non-increasing cut-off function~$\eta = \eta_\delta \in C^\infty(\R^n)$, with~$\supp(\eta) \subset B_{R + \delta}$,~$\eta = 1$ in~$B_R$ and~$|\nabla \delta| \le 2 / \delta$. We set
$$
v_{s, \delta} := \eta v + (1 - \eta) u_s.
$$
Note that~$v_{s, \delta} \in C^{0, 1}(\R^n)$, with Lipschitz constant bounded uniformly in~$s$ (but not in~$\delta$). A straightforward computation shows that it holds in particular
\begin{equation} \label{vsdeltaLip}
|v_{s, \delta}(x) - v_{s, \delta}(y)| \le c_\star \left( 1 + \frac{|u_s(y) - v(y)|}{\delta} \right) |x - y|,
\end{equation}
for some constant~$c_\star > 0$ independent of both~$s$ and~$\delta$.

Thanks to the results of~\cite[Section~3]{BBM01} or~\cite[Theorem~1.2]{P04}, we have that
$$
\E(u; B_R) \le \E(u; B_{R + \delta}) \le \liminf_{k \rightarrow +\infty} \E_{s_k}(u_{s_k}; B_{R + \delta}).
$$
As~$u_s$ is a class~A minimizer for~$\E_s$ and~$v_{s, \delta}$ coincides with~$u_s$ outside of~$B_{R + \delta}$, we then obtain that
$$
\E(u; B_R) \le \liminf_{k \rightarrow +\infty} \E_{s_k}(v_{s_k, \delta}; B_{R + \delta}).
$$
Finally, as~$v_{s, \delta}$ coincides with~$v$ inside~$B_R$, using~\cite[Corollary~2]{BBM01} we conclude that
\begin{equation} \label{ulev+rest}
\E(u; B_R) \le \E(v; B_R) + \RR_\delta,
\end{equation}
where
\begin{align*}
\RR_\delta & := \limsup_{k \rightarrow +\infty} \RR_{k, \delta},\\ \mbox{with } \, \RR_{k, \delta} & := \int_{B_{R + \delta}} \int_{\R^n \setminus B_R} |v_{s_k, \delta}(x) - v_{s_k, \delta}(y)|^2 K_{s_k}(x, y) \, dx dy.
\end{align*}

To deduce the validity of~\eqref{limitulev} from~\eqref{ulev+rest}, we therefore only need to prove that the remainder term~$\RR_\delta$ goes to zero, as~$\delta \rightarrow 0^+$. To do this, we write~$\RR_{k, \delta} = \RR_{k, \delta}^{(1)} + \RR_{k, \delta}^{(2)}$, where
\begin{align*}
\RR_{k, \delta}^{(1)} & := \int_{B_{R + \delta}} \int_{\R^n \setminus B_{R + \delta}} |v_{s_k, \delta}(x) - v_{s_k, \delta}(y)|^2 K_{s_k}(x, y) \, dx dy,\\
\mbox{and } \, \RR_{k, \delta}^{(2)} & := \int_{B_{R + \delta}} \int_{B_{R + \delta} \setminus B_R} |v_{s_k, \delta}(x) - v_{s_k, \delta}(y)|^2 K_{s_k}(x, y) \, dx dy.
\end{align*}
In view of Lemma~\ref{intestlem} and the fact that the~$v_{s, \delta}$'s are bounded in~$C^{0, 1}(\R^n)$ uniformly in~$s$, we know that
$$
\lim_{k \rightarrow +\infty} \RR_{k, \delta}^{(1)} = 0,
$$
for any~$\delta > 0$. Hence, to conclude that~$\RR_\delta \rightarrow 0$, we just need to inspect the contributions coming from~$\RR_{k, \delta}^{(2)}$. Indeed, we claim that, for any~$\delta \in (0, 1/2)$,
\begin{equation} \label{R2vanishes}
\limsup_{k \rightarrow +\infty} \RR_{k, \delta}^{(2)} \le C \delta,
\end{equation}
for some constant~$C > 0$ independent of~$\delta$. Note that if we establish this, then~\eqref{limitulev} would follow.

In order to check~\eqref{R2vanishes}, we take~$k_\delta \in \N$ sufficiently large to have
$$
\| u_{s_k} - u \|_{L^\infty(B_{R + 1})} \le \delta \quad \mbox{for any } k \ge k_\delta.
$$
By this,~\eqref{vsdeltaLip} and the fact that~$v = u$ outside of~$B_R$, for~$k \ge k_\delta$ we compute
\begin{align*} \label{limittech1}
\RR_{k, \delta}^{(2)} & \le c_\star^2 \int_{B_{R + \delta}} \left( \int_{B_{R + \delta} \setminus B_R} \left( 1 + \frac{|u_{s_k}(y) - u(y)|}{\delta} \right)^2 |x - y|^2 K_{s_k}(x, y) \, dy \right) dx \\
& \le 4 c_\star^2 (1 - s_k) \int_{B_{R + \delta} \setminus B_R} \left( \int_{B_{R + 1}} \frac{dx}{|x - y|^{n - 2 + 2 s_k}} \right) dy \\
& \le 4 c_\star^2 |B_{R + \delta} \setminus B_R| (1 - s_k) \int_{B_{2 R + 2}} \frac{dz}{|z|^{n - 2 + 2 s_k}} \\
& \le C \delta,
\end{align*}
for some~$C > 0$ independent of~$k$ and~$\delta$. This clearly implies~\eqref{R2vanishes} and the proof of Theorem~\ref{sto1thm} is thence complete.
\end{proof}

\section{Note added in proof. Weakening of some structural assumptions} \label{AiPsec}

As a consequence of the results obtained in~\cite{C17b} by the first author,\footnote{We emphasize that~\cite{C17b} was not yet available at the time a previous, but finished, version of the present manuscript was completed. We preferred to add this section, instead of altering the core parts of the paper in the light of~\cite{C17b}, mainly to preserve the correct chronological timeline.} some of the hypotheses listed in the introduction can be slightly relaxed. Indeed, the differentiability of the potential~$W$ is no longer needed for the proof of the main result of this paper.

More specifically, Theorem~\ref{mainthm} continues to hold if we replace assumption~\eqref{Wbound} with the following weaker requirement:
\begin{equation} \tag{W3$\ensuremath{'}$} \label{W3'}
\begin{gathered}
\mbox{the map } [-1, 1] \ni r \longmapsto W(x, r) \mbox{ is continuous for a.a. } x \in \R^n, \\
\mbox{and } W(x, r) \le W^* \mbox{ for a.a. } x \in \R^n \mbox{ and any } r \in [-1, 1],
\end{gathered}
\end{equation}
for some~$W^* > 0$.

The reason for this is that the differentiability of~$W$ with respect to the~$r$ variable and the uniform bound for its derivative provided by~\eqref{Wbound} are only used in the proof of Theorem~\ref{mainthm} to apply the regularity theory contained in Section~\ref{regsec}. Since we can now deduce the H\"older continuity of the minimizers of the functional~$\E$ defined in~\eqref{Edef} by taking advantage of~\cite[Theorem~2.4]{C17b} - and therefore not using the Euler-Lagrange equation associated to~$\E$ -, the boundedness of the potential~$W$ granted by~\eqref{W3'} is sufficient.

In consequence of this improvement, the whole range of exponents~$d > 0$ is now admissible in the first example of~\eqref{Wexample}.

\smallskip

We stress that other generalizations of the model considered here could be addressed.

For instance, one can take into account potentials~$W$ that are bounded, but not even continuous, such as~$W(x, r) = Q(x) \chi_{(-1, 1)}(r)$, with~$Q$ positive and periodic. In the local setting, energies involving this potential term are used in the modeling of jets of fluid, and have been studied for instance in~\cite{V04,PV05}. Note that the regularity theory for nonlocal functionals with discontinuous potentials is already available, thanks to~\cite{C17b}. However, Theorem~\ref{mainthm} cannot be automatically extended to these functionals, as the proof provided here makes use of the continuity of~$W$. Nevertheless, we do believe that, with appropriate modifications in the argument, this difficulty could be circumvented.

Another interesting line of investigation is represented by the possibility of replacing the Gagliardo-type seminorm in~\eqref{Edef} with a more general non-quadratic interaction term. The existence of plane-like minimizers for energies with~$L^p$-type gradient structure has been proved in~\cite{PV05}. For nonlocal functionals, we plan to address this problem in a future work.
\smallskip

Moreover, under suitable additional assumptions, in the forthcoming
paper~\cite{CV17}
we will improve the quantitative results of this paper
by showing that
the oscillations of the interfaces with respect to
the reference hyperplane are not only bounded, but
bounded explicitly
by a universal constant times the periodicity scale of the medium.
This additional and quantitative
geometric property will allow us to establish, 
in the limit, the existence of planelike nonlocal minimal 
surfaces in a periodic structure.

\appendix

\section{Some auxiliary results} \label{addcompapp}

In this first appendix we enclose a couple of lemmata which cover some technical aspects that we faced throughout the paper.

\medskip

We begin with an observation on the necessity of hypothesis~\eqref{Krapid} for the validity of the computations of Section~\ref{s>1/2sec}. We refer to Subsection~\ref{minpersubsec}, in particular, for the notation employed in the statement.

\begin{lemma} \label{rapidneclem}
Assume that~$K$ is a measurable kernel satisfying
$$
K(x, y) \ge \frac{\gamma}{|x - y|^{n + \beta}} \quad \mbox{for a.e. } x, y \in \R^n \mbox{ such that } |x - y| \ge \bar{R}, \mbox{ with } \beta \in (0, 1],
$$
for some~$\gamma, \bar{R} > 0$. Then, given any two real numbers~$A < B$, it holds
\begin{equation} \label{rapidnec}
\int_{\{ \omega \cdot x \le A \}} \int_{\tRn \cap \{ \omega \cdot x \ge B \}} \left| u(x) - u(y) \right|^2 K(x, y) \, dx dy = +\infty,
\end{equation}
for any~$u \in \AAB$. Consequently,~$\F \equiv +\infty$ on~$\AAB$.
\end{lemma}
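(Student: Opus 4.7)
The idea is that on the integration domain in \eqref{rapidnec} the integrand admits a uniform lower bound producing, after integration, a divergent one-dimensional expression whenever $\beta\le 1$. First, since $u\in\AAB$, for a.a.\ $x$ with $\omega\cdot x\le A$ and a.a.\ $y$ with $\omega\cdot y\ge B$ one has $u(x)\ge 9/10$ and $u(y)\le -9/10$, hence $|u(x)-u(y)|^2\ge 81/25$. Combined with the assumed lower bound on $K$, the integral in \eqref{rapidnec} is bounded below by a positive constant multiple of
\begin{equation*}
\mathcal I := \int_{\{\omega\cdot x\le A\}}\int_{\tRn\cap\{\omega\cdot y\ge B\}} \frac{\chi_{\{|x-y|\ge\bar R\}}}{|x-y|^{n+\beta}}\, dy\, dx.
\end{equation*}

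Next, I would pick an orthonormal basis $(v_1,\dots,v_{n-1},e)$ of $\R^n$ with $e=\omega/|\omega|$ and choose a fundamental parallelepiped $P\subset\omega^\perp$ for the rank-$(n-1)$ lattice $\Lambda_\omega:=\{k\in\Z^n:\omega\cdot k=0\}$, which has positive finite $(n-1)$-dimensional measure. A fundamental domain of $\sim_\omega$ is then $P+\R e$. Parameterising $y=p+te$ with $p\in P$, $t\ge B/|\omega|$, and $x=q+se$ with $q\in\omega^\perp\simeq\R^{n-1}$, $s\le A/|\omega|$, we have $|x-y|^2=|q-p|^2+(t-s)^2$.

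Applying Fubini and the substitution $w:=q-p$, which for every fixed $p\in P$ sweeps all of $\R^{n-1}$, the integrand becomes independent of $p$ and integration over $p$ produces the factor $|P|$. A standard rescaling $w\mapsto(t-s)w$ yields
\begin{equation*}
\int_{\R^{n-1}}\frac{dw}{(|w|^2+(t-s)^2)^{(n+\beta)/2}} = C_n\,(t-s)^{-1-\beta},
\end{equation*}
for some dimensional constant $C_n>0$. Since $t-s\ge(B-A)/|\omega|>0$ throughout the domain, the truncation $|x-y|\ge\bar R$ excludes at most a bounded slab and therefore removes only a finite quantity from $\mathcal I$. The remaining longitudinal integral,
\begin{equation*}
\int_{B/|\omega|}^{\infty}\!\int_{-\infty}^{A/|\omega|}(t-s)^{-1-\beta}\, ds\, dt = \frac{1}{\beta}\int_{B/|\omega|}^{\infty}\bigl(t-A/|\omega|\bigr)^{-\beta}\, dt,
\end{equation*}
diverges precisely because $\beta\le 1$. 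This establishes $\mathcal I=+\infty$ and hence \eqref{rapidnec}.

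Finally, the integral in \eqref{rapidnec} is one of the pieces of the double integral defining the kinetic part $\K(u;\tRn,\R^n)$ of $\F(u)$ (take the fundamental-domain variable $y\in\tRn\cap\{\omega\cdot y\ge B\}$ and the $\R^n$ variable $x$ in the half-space $\{\omega\cdot x\le A\}$), so $\F\equiv+\infty$ on $\AAB$ follows at once. The only real obstacle is bookkeeping: choosing the coordinates and the fundamental domain so that the translation invariance in the $\omega^\perp$-direction lets us absorb the integration over $p$ into the parallelepiped volume $|P|$, after which the problem collapses to the explicit one-dimensional divergence.
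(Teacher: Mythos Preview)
Your proposal is correct and follows essentially the same route as the paper: bound $|u(x)-u(y)|^2$ below by a positive constant on the domain, use the kernel lower bound, integrate out the $(n-1)$ transverse directions to produce $(t-s)^{-1-\beta}$, and observe that the resulting one-dimensional integral diverges exactly when $\beta\le 1$. The only cosmetic differences are that the paper normalizes to $\omega=e_n$, $A=0$, $B=1$ at the outset rather than working with a general fundamental parallelepiped, and it dispatches the truncation $|x-y|\ge\bar R$ by restricting the longitudinal variable to $[\bar R+1,\infty)$ instead of subtracting a finite piece as you do.
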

\begin{proof}
Of course, we may take~$\omega = e_n$,~$A = 0$ and~$B = 1$. Then,
$$
\{ \omega \cdot x \le A \} = \R^{n - 1} \times (-\infty, 0] \quad \mbox{and} \quad \tRn \cap \{ \omega \cdot x \ge B \} = [0, 1]^{n - 1} \times [1, +\infty).
$$
Under these conditions, the left~hand side of~\eqref{rapidnec} is controlled from below by
$$
I := \gamma \int_{[0, 1]^{n - 1} \times [1, +\infty)} \left( \int_{\left( \R^{n - 1} \times (-\infty, 0] \right) \setminus B_{\bar{R}}(x)} \frac{\left| u(x) - u(y) \right|^2}{|x - y|^{n + \beta}} \, dy \right) dx.
$$
Since~$u \in \mathcal{A}_{e_n}^{0, 1}$, it follows that for any~$x, y \in \R^n$ such that~$x_n \ge 1$ and~$y_n \le 0$,
$$
\left| u(x) - u(y) \right| = u(y) - u(x) \ge \frac{9}{10} - \left( - \frac{9}{10} \right) = \frac{9}{5} \ge 1.
$$
Hence,
$$
I \ge \gamma \int_{[0, 1]^{n - 1} \times [\bar{R} + 1, +\infty)} \left( \int_{\R^{n - 1} \times (-\infty, 0]} \frac{dy}{|x - y|^{n + \beta}} \right) dx.
$$
Arguing as in the proof of Lemma~\eqref{finfunlem}, it is easy to check that
$$
\int_{\R^{n - 1} \times (-\infty, 0]} \frac{dy}{|x - y|^{n + \beta}} = c x_n^{- \beta},
$$
for some constant~$c > 0$ independent of~$x$. Accordingly,
$$
I \ge c \gamma \int_{\bar{R} + 1}^{+ \infty} x_n^{- \beta} dx_n = +\infty,
$$
since~$\beta \le 1$. The thesis then follows.
\end{proof}

Next is a lemma that ensures the finiteness of the integral appearing on the right-hand side of~\eqref{EFrel}, in Subsection~\ref{cptsubsec}.

\begin{lemma} \label{intfinlem}
Let~$\varphi \in L^\infty(\R^n)$ have support compactly contained in~$\tSm$, in the sense of footnote~\ref{cptfoot} at page~\pageref{cptfoot}. Denote with~$\tilde{\varphi}$ the~$\sim_m$-periodic extension to~$\R^n$ of~$\varphi|_{\tRnm}$. Then, the integral
\begin{equation} \label{finint}
\int_{\tRnm} \int_{\R^n \setminus \tRnm} \frac{|\tilde{\varphi}(x)| |\tilde{\varphi}(y)|}{|x - y|^{n + 2 s}} \, dx dy,
\end{equation}
is finite.
\end{lemma}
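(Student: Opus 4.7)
The plan is to identify the quotient~$\tRnm$ with a fundamental domain~$D$ chosen so that~$\supp(\varphi) \subset D$, decompose~$\R^n \setminus D$ as a disjoint union of translates~$D + k$ where~$k$ ranges over the nonzero vectors of the~$(n - 1)$-dimensional period lattice~$\Lambda_m := \{ \sum_{i = 1}^{n - 1} \mu_i m_i z_i : \mu_i \in \Z \}$ associated with~$\sim_m$, and then use the periodicity of~$\tilde{\varphi}$ to rewrite the integral as a series indexed by~$\Lambda_m$. Since~$\tilde{\varphi}(y) = \varphi(y - k)$ for a.e.~$y \in D + k$, the change of variable~$y = y' + k$ turns the integral in~\eqref{finint} into
\begin{equation*}
\sum_{k \in \Lambda_m \setminus \{ 0 \}} \int_{\supp(\varphi)} \int_{\supp(\varphi)} \frac{|\varphi(x)| \, |\varphi(y')|}{|x - y' - k|^{n + 2 s}} \, dx \, dy'.
\end{equation*}

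The key quantitative step I would carry out next is to show that there exists~$c > 0$, depending only on~$\supp(\varphi)$ and~$\Lambda_m$, such that~$|x - y' - k| \ge c \max\{ 1, |k| \}$ for all~$x, y' \in \supp(\varphi)$ and all nonzero~$k \in \Lambda_m$. For~$|k| \ge 2 \diam(\supp(\varphi))$ the triangle inequality yields~$|x - y' - k| \ge |k| - \diam(\supp(\varphi)) \ge |k|/2$, while for the finitely many~$k$ of smaller norm the bound is furnished by the positivity of~$\dist(\supp(\varphi), \supp(\varphi) + k)$, which is ensured precisely by the fact that~$\supp(\varphi)$ is \emph{compactly} contained in~$D$, hence at some positive distance from every lateral face across which an adjacent tile~$D + k$ begins.

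Inserting this lower bound, the integral is majorized by a constant multiple of the lattice series~$\sum_{k \in \Lambda_m \setminus \{0\}} \max\{ 1, |k| \}^{-(n + 2 s)}$. Since~$\Lambda_m$ has rank~$n - 1$ and~$n + 2 s > n - 1$, this series converges by standard lattice comparison, and the finiteness of~\eqref{finint} follows. I expect the only delicate point to be the uniform distance bound for~$k$ of small norm: it depends crucially on the compact containment hypothesis spelled out in footnote~\ref{cptfoot}, as without it the translated supports could accumulate on~$\supp(\varphi)$ across a shared face of the fundamental domain, and the individual terms of the series would no longer be uniformly controlled.
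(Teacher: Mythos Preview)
Your proof is correct, but it proceeds along a genuinely different line from the paper's. The paper fixes coordinates (taking~$\omega = e_n$ and~$m = (1,\ldots,1)$, so that the fundamental domain is~$Q'_{1/2} \times \R$) and, rather than unfolding via periodicity, splits~$\R^n \setminus \tRn$ geometrically into a \emph{near shell}~$\D_1 = (Q'_{\sqrt{n-1}} \setminus Q'_{1/2}) \times \R$ and a \emph{far region}~$\D_2 = (\R^{n-1} \setminus Q'_{\sqrt{n-1}}) \times \R$. On~$\D_1$ the compact containment of~$\supp(\varphi)$ provides a uniform bound~$|x-y| \ge \delta$, and the finite volume of~$\supp(\tilde\varphi) \cap \D_1$ finishes the estimate; on~$\D_2$ one uses~$|x-y| \ge |y'|/2$ and integrates~$|y'|^{-(n+2s)}$ over~$\R^{n-1} \setminus B'_{\sqrt{n-1}}$ directly. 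Your argument instead tiles~$\R^n \setminus D$ by the lattice translates~$D + k$, changes variables, and reduces everything to the convergence of~$\sum_{k \in \Lambda_m \setminus \{0\}} |k|^{-(n+2s)}$ over the rank-$(n-1)$ lattice. Both proofs rest on exactly the same hypothesis---the positive distance of~$\supp(\varphi)$ from the lateral faces---and arrive at the same summability exponent; yours makes the role of the periodic structure more explicit, while the paper's avoids the lattice bookkeeping in favour of a direct two-zone estimate.
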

\begin{proof}
Assume for simplicity that~$\omega = e_n$ and~$m = \left( 1, \ldots, 1 \right)$. With this choices, we identify~$\tRn$ with its fundamental region~$Q'_{1/2} \times \R$.

We split the domain of integration of~\eqref{finint} as
$$
\tRn \times \left( \R^n \setminus \tRn \right) = \left( \tRn \times \D_1 \right) \cup \left( \tRn \times \D_2 \right),
$$
with
$$
\D_1 := \left( Q'_{\sqrt{n - 1}} \setminus Q'_{1/2} \right) \times \R \quad \mbox{and} \quad \D_2 := \left( \R^{n - 1} \setminus Q'_{\sqrt{n - 1}} \right) \times \R.
$$

We first deal with the integral involving the region~$\D_1$. In view of the hypothesis on the support of~$\varphi$, we have
$$
\dist \left( \overline{\supp (\varphi)}, \D_1 \right) \ge \delta,
$$
for some~$\delta > 0$. Therefore, we estimate
\begin{align*}
\int_{\tRn} \int_{\D_1} \frac{|\tilde{\varphi}(x)| |\tilde{\varphi}(y)|}{|x - y|^{n + 2 s}} \, dx dy & \le \| \varphi \|_{L^\infty(\R^n)}^2 \int_{\supp(\varphi)} \int_{\D_1 \cap \{ x_n \in [A, B] \}} \frac{dx dy}{|x - y|^{n + 2 s}} \\
& \le \| \varphi \|_{L^\infty(\R^n)}^2 \delta^{- n - 2 s} \left[ 2 \sqrt{n - 1} \right]^{n - 1} (B - A)^2,
\end{align*}
where we also used the fact that~$\supp(\varphi)$ is contained in the strip~$\R^{n - 1} \times [A, B]$.

On the other hand, if~$x \in \tRn$ and~$y \in \D_2$, then~$|x'| \le \sqrt{n - 1}/2$ and~$|y'| \ge \sqrt{n - 1}$. Hence,
$$
|x - y| \ge |x' - y'| \ge |y'| - |x'| \ge \frac{|y'|}{2},
$$
and thus
\begin{align*}
\int_{\tRn} \int_{\D_2} \frac{|\tilde{\varphi}(x)| |\tilde{\varphi}(y)|}{|x - y|^{n + 2 s}} \, dx dy & \le 2^{n + 2 s} \| \varphi \|_{L^\infty(\R^n)}^2 (B - A)^2 \int_{\R^{n - 1} \setminus B'_{\sqrt{n - 1}}} \frac{dy'}{|y'|^{n + 2 s}} \\
& \le c_n \| \varphi \|_{L^\infty(\R^n)}^2 (B - A)^2,
\end{align*}
for some dimensional constant~$c_n > 0$. This concludes the proof.
\end{proof}

\section{A remark on separability in~$L^p_\loc$ spaces} \label{sepapp}

We discuss here some separability properties of the subsets of the space~$L^p_\loc(\R^n)$ of locally~$p$-summable functions, for~$1 \le p < +\infty$. While the literature on the standard Lebesgue spaces~$L^p(\R^n)$ is large and exhaustive,~$L^p_\loc(\R^n)$ classes are somehow rarely considered as functional spaces. As we have not been able to find precise references for the few facts about~$L^p_\loc(\R^n)$ that we took advantage of in Proposition~\ref{minminismin}, we provide directly here a proof of such results.

\medskip

First, with the aid of the following proposition, we endow~$L^p_\loc(\R^n)$ with a separable metric made up on the exhaustion of balls~$\bigcup_{k \in \N} \limits B_k$ of~$\R^n$.

\begin{proposition} \label{Lplocsepprop}
Let~$1 \le p < +\infty$ and define
$$
d(u, v) := \sum_{\ell = 1}^{+ \infty} \frac{1}{2^\ell} \frac{\| u - v \|_{L^p(B_\ell)}}{1 + \| u - v \|_{L^p(B_\ell)}},
$$
for any~$u, v \in L^p_\loc(\R^n)$. Then,~$\left( L^p_\loc(\R^n), d \right)$ is a separable metric space.
\end{proposition}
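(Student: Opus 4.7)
The plan is to verify first that $d$ defines a metric on $L^p_\loc(\R^n)$ (viewing its elements as equivalence classes modulo almost-everywhere equality) and then to construct an explicit countable dense subset out of countable dense subsets of the spaces $L^p(B_N)$. The series defining $d$ converges since each of its summands is bounded by $2^{-\ell}$, so that $d$ takes values in $[0,1]$. Symmetry and positivity are immediate, while $d(u,v)=0$ forces $\|u-v\|_{L^p(B_\ell)}=0$ for every $\ell\in\N$, which in turn gives $u=v$ almost everywhere on $\R^n$. For the triangle inequality, the key fact is that the auxiliary function $\phi(t):=t/(1+t)$ is non-decreasing and subadditive on $[0,+\infty)$, the subadditivity $\phi(a+b)\le \phi(a)+\phi(b)$ following at once from $\frac{a}{1+a+b}\le \frac{a}{1+a}$ and the same bound with $a$ and $b$ swapped. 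Combining this with the $L^p(B_\ell)$ triangle inequality, one obtains
$$
\phi\bigl(\|u-v\|_{L^p(B_\ell)}\bigr)\le \phi\bigl(\|u-w\|_{L^p(B_\ell)}\bigr)+\phi\bigl(\|w-v\|_{L^p(B_\ell)}\bigr),
$$
for every $\ell\in\N$, and summing after multiplying by $2^{-\ell}$ yields $d(u,v)\le d(u,w)+d(w,v)$.

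\textbf{Separability.} For each $N\in\N$, exploiting the standard separability of $L^p(B_N)$ (for $1\le p<+\infty$) I fix a countable dense subset $\mathcal{D}_N\subset L^p(B_N)$. Extending each element of $\mathcal{D}_N$ by zero outside $B_N$, I obtain a countable family $\widetilde{\mathcal{D}}_N\subset L^p_\loc(\R^n)$, and I set $\mathcal{D}:=\bigcup_{N\in\N}\widetilde{\mathcal{D}}_N$, which is countable. To show that $\mathcal{D}$ is dense, I fix $u\in L^p_\loc(\R^n)$ and $\varepsilon>0$, then choose $N\in\N$ so large that $\sum_{\ell>N}2^{-\ell}<\varepsilon/2$, and finally pick $h\in\mathcal{D}_N$ with $\|u-h\|_{L^p(B_N)}<\varepsilon/2$. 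Still denoting by $h$ its zero extension to $\R^n$, the crucial observation is that the inclusion $B_\ell\subseteq B_N$ for every $\ell\le N$ gives
$$
\|u-h\|_{L^p(B_\ell)}\le \|u-h\|_{L^p(B_N)}<\varepsilon/2 \qquad \mbox{for every }\ell\le N.
$$
Using that $\phi(t)\le t$ for $t\ge 0$ and $\phi(t)\le 1$ for every $t$, I split the series defining $d(u,h)$ at $N$ and estimate
$$
d(u,h)\le \sum_{\ell=1}^N 2^{-\ell}\|u-h\|_{L^p(B_\ell)}+\sum_{\ell>N}2^{-\ell}<\frac{\varepsilon}{2}+\frac{\varepsilon}{2}=\varepsilon,
$$
which proves the density of $\mathcal{D}$.

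\textbf{Expected difficulty.} No step poses a serious obstacle: the triangle inequality is reduced to the elementary subadditivity of the function $\phi$, and the separability argument is a direct diagonal construction over the exhaustion $\R^n=\bigcup_{N\in\N}B_N$, where one has to be slightly careful only in that the extension by zero of $h\in\mathcal{D}_N$ need not approximate $u$ on balls larger than $B_N$ — but the geometric weights $2^{-\ell}$ already make the tail of the series negligible independently of any approximation property.
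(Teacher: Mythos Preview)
Your proof is correct and follows essentially the same approach as the paper: verify that $d$ is a metric (the paper dismisses this as straightforward, while you spell out the subadditivity of $\phi$), then exhibit a countable dense set by approximating a given $u$ in $L^p$ on a large ball $B_N$ and using the geometric weights $2^{-\ell}$ to control the tail. The only cosmetic difference is that the paper draws its countable dense set from a single dense sequence in $L^p(\R^n)$, whereas you assemble $\mathcal{D}$ from dense subsets of each $L^p(B_N)$ extended by zero; both constructions lead to the same estimate $d(u,h)<\varepsilon$ by the identical splitting of the series at $N$.
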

\begin{proof}
It is straightforward to check that~$d$ is a metric. Thus, we only focus on the proof of the separability.

Since~$L^p(\R^n)$ is separable, we may select a sequence~$\{ u_j \}_{j \in \N}$ which is dense in this space. We claim that~$\{ u_j \}$ is dense in~$\left( L^p_\loc(\R^n), d \right)$, too. For a general function~$v \in L^p_\loc(\R^n)$ and any~$k \in \N$, write
$$
\bar{v}^k := \begin{cases}
v & \mbox{in } B_k \\
0 & \mbox{in } \R^n \setminus B_k.
\end{cases}
$$
Thus,~$\bar{v}^k \in L^p(\R^n)$. Fix now~$u \in L^p_\loc(\R^n)$. For any~$k \in \N$, let~$u_{j_k}$ be such that
$$
\| u - u_{j_k} \|_{L^p(B_k)} \le \| \bar{u}^k - u_{j _k} \|_{L^p(\R^n)} \le 2^{- k}.
$$
Of course, such~$u_{j_k}$ exists in view of the density of~$\{ u_j \}$ in~$L^p(\R^n)$. Moreover, we can choose~$\{ j_k \}$ to be increasing in~$k$, so that~$\{ u_{j_k} \}$ is a subsequence of~$\{ u_j \}$. For any~$k$, we then have
\begin{align*}
d(u_{j_k}, u) & = \sum_{\ell = 1}^k \frac{1}{2^\ell} \frac{\| u_{j_k} - u \|_{L^p(B_\ell)}}{1 + \| u_{j_k} - u \|_{L^p(B_\ell)}} + \sum_{\ell = k + 1}^{+\infty} \frac{1}{2^\ell} \frac{\| u_{j_k} - u \|_{L^p(B_\ell)}}{1 + \| u_{j_k} - u \|_{L^p(B_\ell)}} \\
& \le \| u_{j_k} - u \|_{L^p(B_k)} \sum_{\ell = 1}^k \frac{1}{2^\ell} + \sum_{\ell = k + 1}^{+\infty} \frac{1}{2^\ell} \\
& \le \frac{1}{2^{k - 1}},
\end{align*}
and hence~$d(u_{j_k}, u) \rightarrow 0$ as~$k \rightarrow +\infty$. It follows that~$\{ u_j \}$ is dense in~$\left( L^p_\loc(\R^n), d \right)$.
\end{proof}

Now that we have established this property, we can proceed to the kind of separability we are most interested in.

\begin{proposition} \label{aesepprop}
Let~$1 \le p < +\infty$. Then, any subset~$X$ of~$L^p_\loc(\R^n)$ is separable with respect to pointwise a.e. convergence. That is, there exists a sequence~$\{ u_j \}_{j \in \N} \subseteq X$ such that, for any~$u \in X$, a subsequence~$\{ u_{j_k} \}$ of~$\{ u_j \}$ converges to~$u$ a.e. in~$\R^n$.
\end{proposition}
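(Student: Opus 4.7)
The plan is to combine the metric separability of $L^p_\loc(\R^n)$ established in Proposition~\ref{Lplocsepprop} with a standard diagonal extraction that upgrades $L^p$-convergence on each ball into a.e.-convergence on the whole of $\R^n$.

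First, I would recall that a subset of a separable metric space is itself separable in the induced topology. Indeed, if $\{v_i\}_{i \in \N}$ is a countable dense subset of $\left(L^p_\loc(\R^n), d\right)$, then for every pair $(i, k) \in \N^2$ such that the open ball (with respect to $d$) of radius $1/k$ around $v_i$ meets $X$, I choose one element $u_{i, k}$ in the intersection; enumerating the $u_{i, k}$'s produces a countable family $\{u_j\}_{j \in \N} \subseteq X$ that is dense in $X$ with respect to~$d$.

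Next, given any fixed $u \in X$, the density of $\{u_j\}$ in $X$ with respect to $d$ allows me to extract a subsequence (still denoted $\{u_{j_k}\}$ for brevity) such that $d(u_{j_k}, u) \to 0$ as $k \to +\infty$. From the explicit form of $d$ defined in Proposition~\ref{Lplocsepprop} and the fact that each summand is non-negative, this convergence forces
$$
\lim_{k \to +\infty} \| u_{j_k} - u \|_{L^p(B_\ell)} = 0 \quad \mbox{for every } \ell \in \N.
$$

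The final step is a standard diagonal argument. Since $u_{j_k} \to u$ in $L^p(B_1)$, I can extract a subsequence $\{u_{j_k^{(1)}}\}_k$ converging to $u$ a.e. on $B_1$. From this one, using $L^p(B_2)$-convergence, I extract a further subsequence $\{u_{j_k^{(2)}}\}_k$ converging to $u$ a.e. on $B_2$ (and hence also on $B_1$). Iterating, for each $m \in \N$ I obtain a subsequence $\{u_{j_k^{(m)}}\}_k$ converging to $u$ a.e. on $B_m$, with the further property that $\{u_{j_k^{(m+1)}}\}_k$ is a subsequence of $\{u_{j_k^{(m)}}\}_k$. The diagonal sequence $\{u_{j_k^{(k)}}\}_k$ is then eventually a subsequence of each $\{u_{j_k^{(m)}}\}_k$, and therefore converges to $u$ a.e. on $B_m$ for every $m$, i.e. a.e. on $\R^n$, which is the desired conclusion.

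There is no substantial obstacle; the only mild subtlety is making sure that the countable family extracted in the first step lies inside $X$, which is why I pick $u_{i, k}$ directly from $X \cap B_d(v_i, 1/k)$ rather than relying on the ambient dense set $\{v_i\}$ (whose elements need not belong to $X$). Once this is done, the diagonal extraction is entirely routine.
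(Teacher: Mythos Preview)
Your proof is correct and follows essentially the same approach as the paper: separability of $X$ in the metric $d$ of Proposition~\ref{Lplocsepprop}, passage from $d$-convergence to $L^p(B_\ell)$-convergence on each ball, and a diagonal extraction to upgrade to a.e.\ convergence on $\R^n$. The only cosmetic difference is that you spell out the construction of the countable dense subset of $X$ explicitly, whereas the paper invokes a reference for the fact that subspaces of separable metric spaces are separable.
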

\begin{proof}
First of all, we point out that if~$v_j \rightarrow v$ in~$\left( L^p_\loc(\R^n), d \right)$, then~$v_j$ also converges to~$v$ in~$L^p(B_k)$, for any~$k \in \N$. Indeed,
$$
\frac{1}{2^k} \frac{\| v_j - v \|_{L^p(B_k)}}{1 + \| v_j - v \|_{L^p(B_k)}} \le d(v_j, v) \longrightarrow 0,
$$
as~$j \rightarrow + \infty$ and thence the claim follows by noticing that, given a sequence of non-negative real numbers~$\{ a_j \}_{j \in \N}$ and~$a \in [0, +\infty)$,
$$
a_j \longrightarrow a \quad \mbox{if and only if} \quad \frac{a_j}{1 + a_j} \longrightarrow \frac{a}{1 + a},
$$
as~$j \rightarrow +\infty$.

After this preliminary observation, we can now head to the actual proof of the proposition. Note that, since it is a subset of~$L^p_\loc(\R^n)$,~$X$ is itself a separable metric space with respect to~$d$. This follows by applying Proposition~\ref{Lplocsepprop} and, for instance, Proposition~3.25 of~\cite{B11}. Let then~$\{ u_j \}_{j \in \N} \subseteq X$ be a dense sequence. Fixed an element~$u \in X$, by the initial remark we know that there exists a subsequence~$\{ v_j \}$ of~$\{ u_j \}$ such that~$v_j \rightarrow u$ in~$L^p(B_k)$, for any~$k \in \N$.

We perform a diagonal argument in order to extract a further subsequence~$\{ v_j^* \}$ from~$\{ v_j \}$ which converges to~$u$ a.e. in~$\R^n$.

Since~$\{ v_j \}$ converges to~$u$ in~$L^p(B_1)$, we may select a subsequence~$\{ v_j^1 \}$ from~$\{ v_j \}$ which converges to~$u$ a.e. in~$B_1$. Then,~$\{ v_j^1\}$ still converges to~$u$ in~$L^p(B_2)$, as it is a subsequence of~$\{ v_j \}$, and hence there exists another subsequence~$\{ v_j^2 \}$ of~$\{ v_j^1 \}$ converging to~$u$ a.e. in~$B_2$. We keep extracting nested subsequences and obtain, for any~$k$, a subsequence~$\{ v_j^k \} \subseteq \{ v_j^{k - 1} \}$ converging to~$u$ a.e. in~$B_k$. Set~$v_j^* := v_j^j$ for any~$j \in \N$. This new sequence~$\{ v_j^* \}$ is eventually a subsequence of each of the previous sequences. Thus, it converges to~$u$ a.e. in~$B_k$, for any~$k \in \N$, that is a.e. in~$\R^n$
%
%
\end{proof}

\section*{Acknowledgements}

The authors are indebted to Professor Ovidiu Savin for offering several valuable insights and Professor Hans Triebel for some keen comments on a previous version of the paper.


\end{document}